\definecolor{brown}{rgb}{0.43, 0.21, 0.1}
\definecolor{green}{cmyk}{0.64,0,0.95,0.40}
\renewcommand{\restriction}{\mathord{\upharpoonright}}
\def\ml{l\kern-0.035cm\char39\kern-0.03cm}
\numberwithin{equation}{section}
\newtheorem{theorem}{Theorem}[section]
\newtheorem*{theorem*}{Theorem}
\newtheorem{lema}[theorem]{Lemma}
\newtheorem{corol}[theorem]{Corollary}
\newtheorem{prop}[theorem]{Proposition}
\theoremstyle{definition}
\newtheorem{defi}[theorem]{Definition}
\newtheorem{quest}{Question}
\newcommand{\dom}[1]{{\mathrm{dom}(#1)}}
\newcommand{\proj}[2]{{\mathrm{proj_{#1}}\left[#2\right]}}
\newcommand{\baire}{{}^{\omega}}
\newcommand*{\rom}[1]{\expandafter\@slowromancap\romannumeral #1@}
\newcommand{\fin}{\text{\rm Fin}}
\newcommand{\seqn}[2]{\langle #1\colon #2\in \omega \rangle}
\newcommand{\gseqn}[2]{\langle #1\colon #2 \rangle}
\newcommand{\set}[2]{\{#1:\ #2\}}
\newcommand{\rsub}[2]{#1\subseteq^\upharpoonright #2}
\newcommand{\tow}[2]{(#1,#2)\text{-}\mathrm{TMF}}
\newcommand{\tower}[2]{(#1,#2)\text{-}\mathrm{TF}}
\newcommand{\chain}[3]{(#1,[#2],[#3])\text{-}\mathrm{chain}}
\newcommand{\I}{{\mathcal I}}
\newcommand{\J}{{\mathcal J}}
\newcommand{\K}{{\mathcal K}}
\newcommand{\V}{{\mathcal V}}
\newcommand{\A}{{\mathcal A}}
\newcommand{\B}{{\mathcal B}}
\newcommand{\C}{{\mathcal C}}
\newcommand{\E}{{\mathcal E}}
\newcommand{\W}{{\mathcal W}}
\newcommand{\D}{{\mathcal D}}
\newcommand{\F}{{\mathcal F}}
\newcommand{\T}{{\mathcal T}}
\newcommand{\G}{{\mathcal G}}
\newcommand{\PP}{{\mathcal P}}
\newcommand{\Ss}{{\mathcal S}}
\DeclarePairedDelimiter\ev{\langle}{\rangle}%
\DeclarePairedDelimiter\floor{\lfloor}{\rfloor}
\newcommand{\gen}[1]{\left\langle #1 \right\rangle}
\newcommand{\abs}[1]{\left|#1\right|}
\newcommand{\ED}{{\mathcal{ED}}}
\newcommand{\finfin}{\fin\times\fin}
\newcommand{\fino}{\fin\times\emptyset}
\newcommand{\ofin}{\emptyset\times\fin}
\newcommand{\sel}{{\mathcal Sel}}
\newcommand{\finfinp}[1]{\langle#1\rangle_{\fin\times\fin}}
\newcommand{\finop}[1]{\langle#1\rangle_{\fin\times\emptyset}}
\newcommand{\ofinp}[1]{\langle#1\rangle_{\emptyset\times\fin}}
\newcommand{\EDp}[1]{\langle#1\rangle_{\mathcal{ED}}}
\newcommand{\selp}[1]{\langle#1\rangle_{\mathcal Sel}}
\newcommand{\pid}[1]{{\mathrm P}(#1)}
\newcommand{\cmark}{\ding{51}}%
\newcommand{\xmark}{\ding{55}}%
\newtheorem{claim}{Claim}
\newenvironment{clm}[1]
  {\innercustomthm}
  {\endinnercustomthm}
\journal{arXiv}
\begin{document}

\begin{frontmatter}
\title{On P-like ideals induced by disjoint families} 
\author{{Adam Marton}\fnref{fn1}}
\ead{adam.marton@student.upjs.sk}
\author{{Jaroslav \v Supina}\fnref{fn2}}
\address{Institute of Mathematics,
Pavol Jozef \v{S}af\'arik University in Ko\v sice,
Jesenn\'a 5, 040 01 Ko\v{s}ice, Slovakia}
\ead{jaroslav.supina@upjs.sk}
\fntext[fn1]{The~first author was supported by the~Slovak Research and Development Agency under the~Contract no. APVV-20-0045.}
\fntext[fn2]{The~second
author  was supported by the~Slovak Research and Development Agency under the~Contract no. APVV-20-0045 and by the~grant 1/0657/22 of Slovak Grant Agency VEGA.}
\begin{abstract}
We consider a~combinatorial property isolated in the field of ideal convergence, a~P-property for two ideals on natural numbers. We show that among selected ideals induced by disjoint families, not all pairs satisfy P-property for two ideals. In many cases we specify the inducing partitions for which the corresponding ideals possess the property. Regarding selector ideals, a~useful coloring-like necessary condition is provided. 
\end{abstract}
\begin{keyword}
ideal \sep P-ideal \sep disjoint families
\end{keyword}
\end{frontmatter}

\section{Introduction}

\par

We study a~combinatorial property of a~pair $\I,\J$ of ideals on~$\omega$. Namely, we consider the~following variation of a~P-ideal:
\begin{defi}\label{definiciaPJ}
We say that an~ideal $\I$ is a~$\mathrm{P}(\J)$-ideal if for any family $\{I_n:n\in\omega \}\subseteq\I$ there is $I\in\I$ such that $I_n\subseteq^\J I$ for each $n\in\omega$.
\end{defi}
The~property was introduced by M.~Mačaj and M.~Sleziak~\cite{macslez}, and applied later in~\cite{filipow, stanfil, stan, rep1, rep2}. In addition to its equivalent formulation via the Boolean algebra $\mathcal{P}(\omega)/\J$, M.~Mačaj and M.~Sleziak found the~property to be a~useful tool in characterizing certain properties of ideal convergence we describe below. They define a~sequence $\seqn{x_n}{n}$ in a~topological space~$X$ to be $\I^\J$-convergent to a~point $x\in X$ if there is $F\in\I^\ast$ such that the~sequence $\gseqn{x_n}{n\in F}$ is $(\J\restriction F)$-convergent to~$x$. They have shown
\begin{theorem*}[M. Mačaj -- M. Sleziak]
Let $X$ be a~non-discrete {\rm T}$_1$ first countable topological space. The~following are equivalent:
\begin{enumerate}[(1)]
    \item $\I$ is a~$\mathrm{P}(\J)$-ideal.
    \item The family of all equivalence classes corresponding to sets in $\I$ is  $\sigma$-directed in the quotient $\mathcal{P}(\omega)/\J$.
    \item For any sequence $\seqn{x_n}{n}$ in~$X$, if $\seqn{x_n}{n}$ is $\I$-convergent to~$x$ then $\seqn{x_n}{n}$ is $\I^\J$-convergent to~$x$.
\end{enumerate}
\end{theorem*}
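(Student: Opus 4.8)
The plan is to show $(1)\Leftrightarrow(2)$ — a pure reformulation using nothing about $X$ — and then to close the loop $(1)\Rightarrow(3)\Rightarrow(1)$. For the first equivalence: for $A,B\subseteq\omega$ the relation $A\subseteq^\J B$, i.e.\ $A\setminus B\in\J$, is precisely $[A]\leq[B]$ in $\mathcal{P}(\omega)/\J$, so the assertion ``for every $\{I_n:n\in\omega\}\subseteq\I$ there is $I\in\I$ with $I_n\subseteq^\J I$ for all $n$'' says exactly that every countable subset of $\{[I]:I\in\I\}$ has an upper bound inside $\{[I]:I\in\I\}$. The content of the theorem is therefore in $(1)\Rightarrow(3)$ and $(3)\Rightarrow(1)$.

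For $(1)\Rightarrow(3)$ I would use only first countability. Assuming $\seqn{x_n}{n}$ is $\I$-convergent to $x$, fix a decreasing base $\{U_k:k\in\omega\}$ of open neighbourhoods of $x$ and put $I_k=\{n\in\omega:x_n\notin U_k\}$; each $I_k$ lies in $\I$. Applying $(1)$ to $\{I_k:k\in\omega\}$ gives $I\in\I$ with $I_k\setminus I\in\J$ for every $k$; set $F=\omega\setminus I\in\I^\ast$. If $U$ is any neighbourhood of $x$, pick $k$ with $U_k\subseteq U$; then $\{n\in F:x_n\notin U\}\subseteq I_k\cap F=I_k\setminus I\in\J$ and this set lies in $\J\restriction F$. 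As the $U_k$ form a base at $x$, $\gseqn{x_n}{n\in F}$ is $(\J\restriction F)$-convergent to $x$, i.e.\ $\seqn{x_n}{n}$ is $\I^\J$-convergent to $x$.

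For $(3)\Rightarrow(1)$ — the step that consumes ``non-discrete'' and ``{\rm T}$_1$'' — I would code a suitable sequence. Let $\{I_n:n\in\omega\}\subseteq\I$; replacing it by $\{\bigcup_{k\leq n}I_k:n\in\omega\}$, which still lies in $\I$ and for which any common $\J$-bound also bounds the original family, we may assume the sequence increasing, and set $I_{-1}=\emptyset$ and $A_j=I_j\setminus I_{j-1}$, so the $A_j$ are pairwise disjoint with $\bigcup_{j\leq n}A_j=I_n$. Pick a non-isolated point $x\in X$ with a decreasing base $\{U_k:k\in\omega\}$ of open neighbourhoods; since $x$ is not isolated, $U_j\neq\{x\}$, so choose $y_j\in U_j\setminus\{x\}$, and define $\seqn{z_m}{m}$ by $z_m=y_j$ for $m\in A_j$ and $z_m=x$ otherwise. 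Because $y_j\in U_j\subseteq U_k$ whenever $j\geq k$, the relation $z_m\notin U_k$ forces $m\in\bigcup_{j<k}A_j=I_{k-1}\in\I$, so $\seqn{z_m}{m}$ is $\I$-convergent to $x$. Now $(3)$ gives $F\in\I^\ast$ with $\gseqn{z_m}{m\in F}$ being $(\J\restriction F)$-convergent to $x$; put $I=\omega\setminus F\in\I$. Fixing $n$, by {\rm T}$_1$ together with first countability we have $\bigcap_kU_k=\{x\}$, so for each $j\leq n$ there is $k(j)$ with $y_j\notin U_{k(j)}$; with $K=\max_{j\leq n}k(j)$ we have $y_j\notin U_K$ for all $j\leq n$, hence every $m\in I_n\cap F$ has $z_m\notin U_K$ and therefore $I_n\cap F\subseteq\{m\in F:z_m\notin U_K\}\in\J$. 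Thus $I_n\setminus I=I_n\cap F\in\J$, i.e.\ $I_n\subseteq^\J I$ for each $n$, so $\I$ is a $\mathrm{P}(\J)$-ideal.

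The hard part will be $(3)\Rightarrow(1)$: one must build, from an abstract countable subfamily of $\I$, a single sequence in $X$ whose $\I$-limit is forced, and then read a bounding set $I\in\I$ off the filter $F$ furnished by $\I^\J$-convergence. The two delicate choices are the coding $z_m=y_j$, arranged so that ``$z_m\notin U_k$'' already confines $m$ to $I_{k-1}$ and makes $\I$-convergence automatic, and the appeal to {\rm T}$_1$, which is exactly what forces $\bigcap_kU_k=\{x\}$; without it some $y_j$ could sit in every $U_k$, and then $I_n\cap F$ would not be forced to be $\J$-small.
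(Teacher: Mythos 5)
The paper does not give its own proof of this theorem; it is recalled without proof from Ma\v caj--Sleziak~\cite{macslez}, so there is no in-text argument to compare against. Your proof is correct and is the natural, essentially canonical argument: $(1)\Leftrightarrow(2)$ is the observation that $\subseteq^\J$ is the order of the quotient $\mathcal{P}(\omega)/\J$; $(1)\Rightarrow(3)$ reads off the ideal sets $I_k=\{n:x_n\notin U_k\}$ from a countable decreasing base at $x$ and converts the $\J$-bound $I$ into $F=\omega\setminus I$; and $(3)\Rightarrow(1)$ codes an increasing family into a sequence via a non-isolated point. The places where the hypotheses act are correctly located: first countability gives countably many $I_k$'s in one direction and a decreasing base at the chosen non-isolated point in the other; non-discreteness supplies the non-isolated $x$ so that the $y_j$'s exist; and T$_1$ forces $\bigcap_k U_k=\{x\}$, which is exactly what guarantees each $y_j$ escapes some $U_{k(j)}$ and makes $I_n\cap F$ $\J$-small. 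The reduction to an increasing family (so that common $\J$-bounds pass back to the original $I_n$'s) and the harmless possibility of empty $A_j$'s or repeated $y_j$'s are all handled correctly.
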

In~\cite{filipow} R.~Filipów and M.~Staniszewski study an ideal version of equal convergence~\cite{CsaLa}, called quasi-normal convergence in~\cite{Buk91}. In fact, various versions of ideal equal convergence have been introduced in \cite{das, szuca} but the most general one has appeared in this very paper \cite{filipow}. A~sequence $\seqn{f_n}{n}$ of real-valued functions defined on a~set~$X$ is $(\I,\J)$-equally convergent to a~function $f$ if there is a sequence of positive reals $\seqn{\varepsilon_n}{n}$ $\J$-converging to $0$, such that $\set{n}{|f_n(x)-f(x)|\geq\varepsilon_n}\in\I$ for every $x\in X$. They have shown
\begin{theorem*}[R.~Filipów -- M.~Staniszewski]
Let $X$ be a~non-empty set. The~following are equivalent:
\begin{enumerate}[(1)]
    \item $\I$ is a~$\mathrm{P}(\J)$-ideal.
   \item For any sequence $\seqn{f_n}{n}$ of real-valued functions on~$X$, if $\seqn{f_n}{n}$ is $(\I,\I)$-equally convergent to~$f$ then $\seqn{f_n}{n}$ is $(\I,\J)$-equally convergent to~$f$.
    \item For any sequence $\seqn{f_n}{n}$ of real-valued functions on~$X$, if $\seqn{f_n}{n}$ is $\I$-uniformly convergent to~$f$ then $\seqn{f_n}{n}$ is $(\I,\J)$-equally convergent to~$f$.
    \item For any sequence $\seqn{f_n}{n}$ of real-valued functions on~$X$, if $\seqn{f_n}{n}$ is $\sigma$-$\I$-uniformly convergent to~$f$ then $\seqn{f_n}{n}$ is $(\I,\J)$-equally convergent to~$f$.
\end{enumerate}
\end{theorem*}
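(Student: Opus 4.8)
The plan is to prove the cycle $(1)\Rightarrow(2)\Rightarrow(4)\Rightarrow(3)\Rightarrow(1)$. First I would recall the two convergence notions not written out above: $\seqn{f_n}{n}$ is \emph{$\I$-uniformly convergent} to $f$ if $\set{n}{\exists x\in X\ |f_n(x)-f(x)|\ge\varepsilon}\in\I$ for every $\varepsilon>0$, and \emph{$\sigma$-$\I$-uniformly convergent} to $f$ if $X=\bigcup_m X_m$ with $\seqn{f_n\restriction X_m}{n}$ $\I$-uniformly convergent to $f\restriction X_m$ for each $m$. By passing to finite unions I may assume the $X_m$ are increasing, and I may fix witnesses $A_{m,k}\in\I$, increasing in both indices, such that $|f_n(x)-f(x)|<1/k$ whenever $x\in X_m$ and $n\notin A_{m,k}$.

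For $(1)\Rightarrow(2)$ — the only place the $\mathrm{P}(\J)$-property genuinely enters — I would start from a witness $\seqn{\varepsilon_n}{n}$ of $(\I,\I)$-equal convergence, so the exceptional sets $E_x=\set{n}{|f_n(x)-f(x)|\ge\varepsilon_n}$ are in $\I$ and, since $\seqn{\varepsilon_n}{n}$ is $\I$-convergent to $0$, so are the threshold sets $B_k=\set{n}{\varepsilon_n\ge 1/k}$. Applying $\mathrm{P}(\J)$ to $\{B_k:k\in\omega\}$ I get $B\in\I$ with $B_k\setminus B\in\J$ for all $k$, and I set $\varepsilon_n'=\varepsilon_n$ for $n\notin B$ and $\varepsilon_n'=1/(n+1)$ for $n\in B$. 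Then $\set{n}{\varepsilon_n'\ge 1/k}\subseteq(B_k\setminus B)\cup\{0,\dots,k\}\in\J$, so $\seqn{\varepsilon_n'}{n}$ is $\J$-convergent to $0$, while the new exceptional set of each $x$ lies in $B\cup E_x\in\I$; hence $(\I,\J)$-equal convergence holds.

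For $(2)\Rightarrow(4)$ it is enough to show that $\sigma$-$\I$-uniform convergence implies $(\I,\I)$-equal convergence and then invoke $(2)$. With the witnesses above I would put $k(n)=\max\bigl(\set{k\le n}{n\notin A_{k,k}}\cup\{0\}\bigr)$ and $\varepsilon_n=1/k(n)$ if $k(n)\ge1$, $\varepsilon_n=1$ otherwise. If $k(n)<j\le n$ then $n\in A_{j,j}$ (else $j$ would enter the defining set of $k(n)$), so $\set{n}{\varepsilon_n\ge 1/j}\subseteq A_{j+1,j+1}\cup\{0,\dots,j\}\in\I$, giving $\I$-convergence of $\seqn{\varepsilon_n}{n}$ to $0$; and for $x\in X_{m_0}$, whenever $k(n)\ge m_0$ we have $n\notin A_{k(n),k(n)}\supseteq A_{m_0,k(n)}$, hence $|f_n(x)-f(x)|<1/k(n)=\varepsilon_n$, so the exceptional set of $x$ is contained in $\set{n}{k(n)<m_0}\subseteq A_{m_0,m_0}\cup\{0,\dots,m_0-1\}\in\I$. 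Then $(4)\Rightarrow(3)$ is immediate, since $\I$-uniform convergence is $\sigma$-$\I$-uniform convergence with a single piece equal to $X$.

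For $(3)\Rightarrow(1)$ I would argue contrapositively. If $\I$ is not a $\mathrm{P}(\J)$-ideal, then (replacing sets by finite unions) there is an increasing $\gseqn{I_k}{k\in\omega}$ in $\I$ with no $I\in\I$ satisfying $I_k\setminus I\in\J$ for all $k$. I define $a_n=1/\min\set{k}{n\in I_k}$ for $n\in\bigcup_k I_k$ and $a_n=0$ otherwise, and take $f_n\equiv a_n$, $f\equiv 0$ on $X$ (legitimate since $X\ne\emptyset$). Since $\set{n}{a_n\ge 1/k}=I_k\in\I$, this sequence is $\I$-uniformly convergent to $f$; but if $\seqn{\varepsilon_n}{n}$ witnessed $(\I,\J)$-equal convergence with $E=\set{n}{a_n\ge\varepsilon_n}\in\I$, then for $n\notin E$ with $a_n\ge 1/k$ we get $\varepsilon_n>1/k$, whence $I_k\setminus E\subseteq\set{n}{\varepsilon_n\ge 1/k}\in\J$ for every $k$, contradicting the choice of $\gseqn{I_k}{k\in\omega}$; so $(3)$ fails. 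I expect $(3)\Rightarrow(1)$ to be the main obstacle conceptually: one has to notice that sequences constant in $x$ already defeat $(3)$, and that $(\I,\J)$-equal convergence of such a sequence is \emph{exactly} the existence of a common pseudo-$\J$-upper bound inside $\I$ for the threshold sets of its convergence rate. The only other delicate bookkeeping is the single index $k(n)$ in $(2)\Rightarrow(4)$, which is why the $A_{m,k}$ are arranged to be increasing in the first index.
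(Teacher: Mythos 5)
Your proposal is correct and complete. The paper does not prove this theorem itself but cites it from Filip\'ow--Staniszewski, so there is no internal argument to compare against; the cycle $(1)\Rightarrow(2)\Rightarrow(4)\Rightarrow(3)\Rightarrow(1)$ you use — damping $\varepsilon_n$ on the $\mathrm{P}(\J)$-bound~$B$ of the threshold sets $B_k$, reducing $\sigma$-$\I$-uniform to $(\I,\I)$-equal convergence by the pigeonhole index $k(n)$, and defeating~(3) with the constant sequence $f_n\equiv a_n$ whose threshold sets realize an increasing witness to $\neg\mathrm{P}(\J)$ — is the standard route in the ideal-convergence literature, and the key observation you flag (that $(\I,\J)$-equal convergence of a sequence constant in~$x$ is exactly the existence of a common $\J$-almost upper bound in $\I$ for its rate-of-convergence threshold sets) is indeed the crux of the converse direction.
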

This is an interesting result since in the classical (non-ideal) case, uniform convergence is stronger than equal convergence and $\sigma$-uniform convergence is even equivalent to equal convergence. 
\par
We shall study the~introduced variation of the P-ideal notion for ideals which are defined via a~given partition of natural numbers. Four such critical ideals, which obtained standard notation $\fino$, $\ED$, $\ofin$, $\finfin$, have been isolated in the literature (see below for the~definition and~\cite{BrFl17,BrFaVe,Hr,hrusakborel} for the details). We consider an additional ideal -- $\sel$, the~ideal generated by the family of graphs of functions from ${^\omega\omega}$. We have found all pairs $\I,\J$ such that $\I$ is a~$\mathrm{P}(\J)$-ideal, see the~table below or Table~\ref{omxom}.
 \begin{center}
\begin{tabular}{|c|c|c|c|c|c|c|}
\hline
 & $\mathrm{P}$  & $\mathrm{P}(\fin\times\emptyset)$&  $\mathrm{P}(\sel)$   & $\mathrm{P}(\E\D)$ &  $\mathrm{P}(\emptyset\times\fin)$ &  $\mathrm{P}(\fin\times\fin)$\\
 \hline\hline
  $\fin\times\emptyset$ & \xmark  & \cmark & \xmark &\cmark & \xmark & \cmark  \\
  \hline
  $\sel$ & \xmark  & \xmark & \cmark &\cmark & \cmark & \cmark\\
  \hline
  $\E\D$ & \xmark   & \xmark & \xmark &\cmark & \xmark &\cmark\\
  \hline
  $\emptyset\times\fin$& \cmark  &  \cmark & \cmark &\cmark& \cmark  & \cmark\\
  \hline
  $\fin\times\fin$ & \xmark  & \cmark & \xmark & \cmark & \xmark  &\cmark \\
  \hline
\end{tabular}
\end{center}
Furthermore, we study a~more general situation when the~partition~$\A$ inducing an ideal $\I$ is potentially different from the~partition~$\B$ inducing an ideal $\J$. Therefore we introduce the~following notation. We begin with non-tall ideals. Let $\A$ be an infinite partition of $\omega\times\omega$ into infinite sets.\footnote{ Throughout the paper, we sometimes use ``for any $\A$'' instead of more cumbersome ``for any infinite partition $\A$ of $\omega$ into infinite sets''. The intended meaning is usually clear from the context.}
\begin{center}
        \begin{tabular}{ll}
             $\finop{\A}$& the~ideal generated by~$\A$,\\[0.2cm]
             $\selp{\A}$& the~ideal generated by the~family of all selectors of~$\A$,\footnotemark\\[0.2cm]
             $\ofinp{\A}$& the~family of all sets with finite intersection with each element of~$\A$.
        \end{tabular}
        \footnotetext{We say that a set $S\in\PP(X)$ is a selector (semiselector) of an infinite family $\A\subseteq\PP(X)$, if $\abs{S\cap A}=1$ ($\abs{S\cap A}\leq 1$ and $S$ is infinite) for any $A\in\A$ and $S\setminus \bigcup\A=\emptyset$.}
\end{center}
We sometimes write shortly $\langle\A\rangle$ instead of~$\finop{\A}$. A~supremum of pairs of previous ideals leads to the~following tall ideals.
\begin{center}
        \begin{tabular}{ll}
             $\EDp{\A}$& the~ideal generated by~$\finop{\A}\cup\selp{\A}$,\\[0.2cm]             
             $\finfinp{\A}$& the~ideal generated by~$\finop{\A}\cup\ofinp{\A}$.
        \end{tabular}
\end{center}
One can see that the~introduced ideals are representations of all isomorphic copies of ideals $\fino$, $\sel$, $\ED$, $\ofin$, $\finfin$, respectively. Considering vertical sets $V_n=\{n\}\times\omega$,\footnote{We will use this notation for the sets $\{n \}\times \omega$ throughout the paper.} and corresponding partition $\V=\set{V_n}{n\in\omega}$, we obtain $\fino=\finop{\V}$, $\sel=\selp{\V}$, etc. 
\par
The~paper is organized as follows. In the~next section, we continue recalling the results on the P-property for two arbitrary ideals obtained by other authors. Our own observations are added as well. The~most important result there is the~introduction of the~relation~$\rsub{}{}$ between two ideals. It is a~natural sufficient condition for the~P-property for two ideals to be satisfied, and can be seen as a generalization of inclusion and orthogonality, see Proposition~\ref{ortaink}. Surprisingly, the~relation~$\rsub{}{}$ is a~characterization of a~P-property for many pairs of ideals we consider. Indeed, we say that $\rsub{\I}{\J}$ if the~whole ideal $\I$ has a~$\J$-union\footnote{A~set~$E$ such that $I\subseteq^\J E$ for all $I\in\I$.} in~$\I$, and by Corollary~\ref{prehlcor}, Theorems~\ref{ref1}, \ref{veze}, \ref{ed_ofin}, we obtain
\begin{theorem*}
If $\I$ is one of $\finop{\A}$, $\selp{\A}$, $\EDp{\A}$, and $\J$ one of $\finop{\B}$, $\ofinp{\B}$, then 
\begin{center}
    $\I$ is a $\mathrm{P}(\J)$-ideal if and only if $\rsub{\I}{\J}$.    
\end{center}
\end{theorem*}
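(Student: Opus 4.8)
The plan is to prove, for each of the six pairs $(\I,\J)$ in the statement, the two implications separately. The implication ``$\rsub{\I}{\J}$ implies $\I$ is a $\mathrm P(\J)$-ideal'' needs no hypothesis on the ideals: if $E\in\I$ satisfies $I\subseteq^{\J}E$ for every $I\in\I$, then this one set $E$ witnesses Definition~\ref{definiciaPJ} for an arbitrary countable subfamily of~$\I$; this is the easy half of Proposition~\ref{ortaink}. So all the work is in the converse, ``$\I$ is a $\mathrm P(\J)$-ideal implies $\rsub{\I}{\J}$'', which I would split according to the shape of~$\I$.

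For $\I=\finop{\A}$ the point is that this ideal has a countable cofinal subfamily: with $\A=\{A_n:n\in\omega\}$, every member of $\finop{\A}$ lies below some $A_0\cup\dots\cup A_n$. Assuming $\finop{\A}$ is a $\mathrm P(\J)$-ideal, apply Definition~\ref{definiciaPJ} to $\{A_n:n\in\omega\}$ to obtain $E\in\finop{\A}$ with $A_n\subseteq^{\J}E$ for all $n$; as $\J$ is closed under finite unions we get $A_0\cup\dots\cup A_n\subseteq^{\J}E$ for each $n$, hence $I\subseteq^{\J}E$ for every $I\in\finop{\A}$, that is, $\rsub{\finop{\A}}{\J}$. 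This argument does not see which of $\finop{\B}$, $\ofinp{\B}$ the ideal $\J$ is, and is essentially Corollary~\ref{prehlcor}.

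The case $\I=\EDp{\A}$ reduces, by soft arguments, to $\finop{\A}$ together with $\selp{\A}$. If $\EDp{\A}$ is a $\mathrm P(\J)$-ideal, I would first observe that then both $\finop{\A}$ and $\selp{\A}$ are $\mathrm P(\J)$-ideals: given a countable subfamily of one of them, dominate it by the $\A$-cells resp.\ by countably many selectors of~$\A$, take a $\J$-union $E\in\EDp{\A}$, and split $E=E'\cup E''$ with $E'\in\finop{\A}$, $E''\in\selp{\A}$; since a selector meets any set in $\finop{\A}$ finitely and a set in $\selp{\A}$ meets any $\A$-cell finitely, the part $E''$ (resp.\ $E'$) already serves as a $\J$-union of the given subfamily inside $\selp{\A}$ (resp.\ $\finop{\A}$). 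Once $\rsub{\finop{\A}}{\J}$ and $\rsub{\selp{\A}}{\J}$ are in hand, the union of their two witnesses lies in $\EDp{\A}$ and is a $\J$-union of $\EDp{\A}$, since every member of $\EDp{\A}$ is the union of an $\finop{\A}$-part and a $\selp{\A}$-part. Thus the whole theorem comes down to the single pair $\I=\selp{\A}$.

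The case $\I=\selp{\A}$ is the real content, and since $\selp{\A}$ has no countable cofinal subfamily one must argue the contrapositive: assuming $\rsub{\selp{\A}}{\J}$ fails, construct recursively a countable family of selectors of $\A$ with no $\J$-union in $\selp{\A}$. The mechanism is that the failure of $\rsub{\selp{\A}}{\J}$ forces an ``independence'' between the inducing partition $\A$ and the partition $\B$ underlying $\J$, namely infinitely many cells of $\A$ along which a selector can be rerouted so as to escape, modulo $\J$, any member of $\selp{\A}$ fixed so far; diagonalizing across these cells produces the required family. For $\J=\finop{\B}$ the independence is a dichotomy on which cells of $\B$ are met ``cofinally over $\A$'' (Theorem~\ref{ref1}); for $\J=\ofinp{\B}$ the rerouting data has to be organized into a tower, which is exactly the role of the $\tow{\I}{\J}$/$\tower{\I}{\J}$ objects and is carried out in Theorems~\ref{veze} and~\ref{ed_ofin}. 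I expect the $\ofinp{\B}$ situation to be the genuine obstacle: one must first isolate the precise relation between $\A$ and $\B$ equivalent to $\rsub{\selp{\A}}{\ofinp{\B}}$ — of which inclusion and orthogonality from Proposition~\ref{ortaink} are merely special cases — and then, when it fails, diagonalize against \emph{all} selectors of $\A$ simultaneously while keeping every set in the constructed family inside $\selp{\A}$.
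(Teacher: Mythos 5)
Your decomposition is the right one, and the easy parts of it are correct. The direction $\rsub{\I}{\J}\Rightarrow\pid{\J}$ is indeed immediate. For $\I=\finop{\A}$ your argument is sound and in fact a slightly more streamlined version of the paper's Proposition~\ref{generovaneAD}: you apply the $\pid{\J}$-property directly to the countable generating family $\{A_n\}$ and use closure of $\J$ under finite unions, rather than arguing by contrapositive with an AD family as the paper does; both are fine. Your reduction of $\EDp{\A}$ to $\finop{\A}$ and $\selp{\A}$ is also correct (it is essentially Proposition~\ref{differJ} with the splitting $E=E'\cup E''$ made explicit, plus the easy ``two $\rsub{}{}$ witnesses union to one'' step that the paper states in the proof of Theorem~\ref{ed_ofin}); the paper instead embeds the $\EDp{\A}$ cases into two long equivalence cycles, Theorems~\ref{ref1} and~\ref{ed_ofin}, but the underlying mechanism is the same.

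The genuine gap is the case $\I=\selp{\A}$, for both choices of $\J$, and it is the only place where the theorem has real content. You correctly observe that $\selp{\A}$ has no countable cofinal subfamily so the $\finop{\A}$ trick is unavailable, and that one must argue by contrapositive, but what you offer instead is a heuristic (``rerouting along infinitely many cells,'' ``diagonalizing'') with explicit deference to Theorems~\ref{ref1} and~\ref{veze} of the paper for the execution. A blind proof cannot cite those. Concretely, for $\J=\finop{\B}$ the missing step is the construction, under $\neg(\ED\perp\finop{\B})$, of a sequence $\langle\T_k\rangle$ of essentially different $(1,k)$-towers of $\B$-monochromatic functions together with the pigeonhole argument (Lemma~\ref{omegaveze}) showing that the partial functions $f_i=\bigcup_{k>i}g^k_i$ have no $\finop{\B}$-union in $\sel$; and also the reverse, that the absence of such towers forces orthogonality. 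For $\J=\ofinp{\B}$ the missing content is larger and qualitatively different: one needs the explicit extraction of a $\sel$-set $H$ from the $\chi^i_B$-decomposition of the columns, using the nonexistence of an $(\omega,k{+}1)$-tower to make the sets $\bigcap_{j\le k}\dom{g_{m_j}}$ finite — the combinatorial core of the implication (3)$\to$(2) in Theorem~\ref{veze}. Without those constructions the $\selp{\A}$ case, and therefore also your $\EDp{\A}$ case which depends on it, remains unproven.
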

\noindent Previous theorem is not true for $\I=\ofinp{\A}$ since $\ofinp{\A}$ is a~P-ideal. 
\par
The~reasoning for Table~\ref{omxom} is included in Section~\ref{natural_finfin}. Copies of~$\fino$ and~$\finfin$ are being examined in Section~\ref{natural_finfin} as well, and in Corollary~\ref{prehlcor} we prove the~following.
\begin{theorem*}
$\finop{\A}$ is a $\mathrm{P}(\J)$-ideal if and only if $\finfinp{\A}$ is a $\mathrm{P}(\J)$-ideal.
\end{theorem*}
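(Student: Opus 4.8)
The plan is to prove both directions by analyzing how a countable family of generators of $\finop{\A}$ sits inside $\finfinp{\A}$, using the fact that $\finop{\A} \subseteq \finfinp{\A}$ and that both ideals share the same underlying partition $\A$. The key structural observation is that a set $E \subseteq \omega \times \omega$ is a $\J$-union of $\finop{\A}$ precisely when it is a $\J$-union of $\finfinp{\A}$ with the extra information that $E$ itself can be taken in the respective ideal; more importantly, for a countable subfamily $\{I_n : n \in \omega\} \subseteq \finfinp{\A}$ one can first replace each $I_n$ by a set $I_n' \in \finop{\A}$ and an ``$\ofinp{\A}$-part'' $I_n''$ so that $I_n \subseteq I_n' \cup I_n''$, since $\finfinp{\A}$ is generated by $\finop{\A} \cup \ofinp{\A}$. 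This reduces the $\mathrm{P}(\J)$ question for $\finfinp{\A}$ to handling the $\finop{\A}$-parts together with countably many sets meeting each element of $\A$ finitely.

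For the forward direction, assume $\finop{\A}$ is a $\mathrm{P}(\J)$-ideal and take $\{I_n : n \in \omega\} \subseteq \finfinp{\A}$. Writing $I_n \subseteq F_n \cup C_n$ with $F_n \in \finop{\A}$ (a finite union of elements of $\A$) and $C_n \in \ofinp{\A}$, I would apply the $\mathrm{P}(\J)$-property of $\finop{\A}$ to $\{F_n : n \in \omega\}$ to obtain $F \in \finop{\A}$ with $F_n \subseteq^\J F$ for all $n$; then the set $F \cup \bigcup_n (C_n \setminus F)$ — or a suitable $\ofinp{\A}$-modification of $\bigcup_n C_n$ — should serve as the witness in $\finfinp{\A}$. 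The point is that $\bigcup_n C_n$ need not lie in $\ofinp{\A}$, but after removing the (finitely many, per column) overlaps with $F$ one can arrange the remainder to meet each $A \in \A$ finitely by a diagonal argument, or absorb the bad columns into $F$ if $F$ already contains them. For the reverse direction, one uses $\finop{\A} \subseteq \finfinp{\A}$: given $\{I_n\} \subseteq \finop{\A}$, the $\mathrm{P}(\J)$-property of $\finfinp{\A}$ yields $E \in \finfinp{\A}$ with $I_n \subseteq^\J E$; writing $E \subseteq F \cup C$ as above, one checks that $F \in \finop{\A}$ still works as a $\J$-union of the $I_n$'s, because each $I_n$ is a union of full elements of $\A$ and $I_n \setminus F \subseteq C \cup (I_n \setminus E) \cup (\text{a }\J\text{-small set})$, where $I_n \cap C$ is $\ofinp{\A}$-small but also a union of full columns intersected with $C$, forcing it to be contained in finitely many elements of $\A$, hence $\J$-small provided $\J \supseteq \finop{\A}$ or otherwise absorbable.

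The main obstacle I anticipate is the interaction between the ``tall part'' $\ofinp{\A}$ inside $\finfinp{\A}$ and the behavior of $\subseteq^\J$: specifically, showing that the $C_n$-contributions can be uniformly controlled so that a single $\J$-union in $\finfinp{\A}$ exists whenever one exists in $\finop{\A}$, and conversely that the presence of an $\ofinp{\A}$-summand in a witness $E$ does not genuinely help cover the $I_n$'s modulo $\J$. This hinges on the elementary but slightly delicate fact that if $I \in \finop{\A}$ and $C \in \ofinp{\A}$ then $I \cap C$ lies in $\finop{\A}$ (indeed it is contained in the union of those elements of $\A$ that $I$ uses, each met finitely by $C$ — wait, this gives a finite subset of each such element, so $I \cap C$ is finite), which should let the $\finop{\A}$-witness do all the work. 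I would be careful to state this containment lemma explicitly and to separate the cases according to whether $\J$ contains $\finop{\A}$, since the argument for absorbing the overlap columns differs. Once this lemma is in place, both implications follow by the bookkeeping sketched above without further difficulty.
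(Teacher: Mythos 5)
Your proposal uses exactly the paper's decomposition $\finfinp{\A}=\finop{\A}\lor\ofinp{\A}$, and the two steps you need (handling the $\ofinp{\A}$-parts in the forward direction, controlling the overlap in the reverse direction) are exactly what the paper packages abstractly in Proposition~\ref{differJ} applied to $\I_1=\finop{\A}$, $\I_2=\ofinp{\A}$, together with the fact that $\ofinp{\A}$ is a P-ideal. Two places in your sketch deserve tightening. In the forward direction, ``removing the overlaps with $F$'' is a red herring: $C_n\cap F$ is already finite for each $n$ (since $F$ is essentially a finite union of elements of $\A$ and each $C_n$ meets each such element finitely), and $\bigcup_n(C_n\setminus F)$ can still meet infinitely many members of $\A$ infinitely. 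The step that actually carries the load is the one you mention only in passing, the diagonal argument showing $\ofinp{\A}$ is a P-ideal, which gives a single $C\in\ofinp{\A}$ with $C_n\subseteq^* C$ for all $n$; then $F\cup C\in\finfinp{\A}$ is the desired $\J$-union, with no need to touch $F$. In the reverse direction, your worry about ``separating cases according to whether $\J$ contains $\finop{\A}$'' is unnecessary. You already observe, correctly, that for $I\in\finop{\A}$ and $C\in\ofinp{\A}$ the set $I\cap C$ is \emph{finite}; since $\fin\subseteq\J$ by the standing convention, $I_n\cap C\in\J$ unconditionally. Writing the witness $E\in\finfinp{\A}$ as $E\subseteq F\cup C$ with $F\in\finop{\A}$, $C\in\ofinp{\A}$, one gets $I_n\setminus F\subseteq(I_n\setminus E)\cup(I_n\cap C)\in\J$, so $F$ alone is the $\J$-union in $\finop{\A}$, with no case split. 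With these two clarifications your argument matches the paper's proof in substance.
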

\par
Considering copies of the~ideal $\sel$ in Section~\ref{S-towers} we found a~schema of combinatorial properties of partitions describing necessary criteria for $\selp{\A}$ to be a~$\pid{\J}$-ideal for all mentioned copies~$\J$ of ideals induced by a~partition, see Proposition~\ref{sufficient}. Moreover, the~conditions are sufficient in some cases as well. We say that a~family of $k$ disjoint semiselectors of~$\A$ is an~($\omega,k$)-tower of $\B$-monochromatic semiselectors of~$\A$ if all semiselectors from this family are selectors of a~single infinite subfamily of $\A$, and each semiselector is covered by a~set in~$\B$. In Theorem~\ref{veze} we show the~following.
\begin{theorem*}
$\selp{\A}$ is a $\pid{\ofinp{\B}}$-ideal if and only if there is no \emph{(}$\omega,k$\emph{)}-tower of $\B$-monochromatic semi\-se\-lectors of~$\A$ for some~$k$.\footnote{However, to simplify the~exposition we treat just $\A=\V$ in the~paper, which describes just when $\sel$ is a $\pid{\ofinp{\B}}$-ideal. In such a~case, semiselectors are just partial functions on~$\omega$. One can easily modify the~definition of the~notion to obtain a~more general situation.
}
\end{theorem*}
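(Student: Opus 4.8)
The plan is to prove the two implications separately, routing the substantive one through the relation $\rsub{}{}$ and Proposition~\ref{ortaink}. As the paper does, I fix $\A=\V$ and write $\sel=\selp\V$; thus $\sel$ is the ideal on $\omega\times\omega$ generated by graphs of functions, a semiselector of $\V$ is the graph of a partial function with infinite domain, a selector of $\{V_m:m\in D\}$ is such a graph with domain exactly $D$, and ``$\B$-monochromatic'' means contained in a single member of $\B$. For $B\in\B$ put $D_B=\{m:B\cap V_m\neq\emptyset\}$ and $c_B(m)=\abs{B\cap V_m}$, and let $\B_\infty=\{B\in\B:D_B\text{ infinite}\}$; since $\B$ partitions $\omega\times\omega$ it, and hence $\B_\infty$, is countable. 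The two implications to be shown are: (i) if no $(\omega,k)$-tower exists for some $k$, then $\rsub{\sel}{\ofinp\B}$, so that $\sel$ is a $\pid{\ofinp\B}$-ideal by Proposition~\ref{ortaink}; and (ii) if an $(\omega,k)$-tower exists for every $k$, then $\sel$ is not a $\pid{\ofinp\B}$-ideal.

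For (i), fix $k$ witnessing the non-existence of towers, and first extract two structural facts. For each $B$ the set $D_B^{(k)}:=\{m:c_B(m)\geq k\}$ is finite, since otherwise, choosing $k$ distinct points of $B\cap V_m$ for each $m\in D_B^{(k)}$ and collecting the $i$-th choices into $s_i$ yields $k$ pairwise disjoint selectors of $\{V_m:m\in D_B^{(k)}\}$ inside $B$ --- an $(\omega,k)$-tower. Moreover $\{D_B:B\in\B_\infty\}$ is $k$-wise almost disjoint: if $D_{B_1}\cap\dots\cap D_{B_k}$ were infinite for distinct $B_1,\dots,B_k$, then picking one point of each $B_t\cap V_m$ (automatically distinct, since the $B_t$ are disjoint) for $m$ in that intersection is again an $(\omega,k)$-tower. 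The crux is then a purely combinatorial lemma I would prove first: a countable $k$-wise almost disjoint family $\{D_i\}$ admits finite sets $G_i\subseteq D_i$ for which $\{D_i\setminus G_i\}$ has multiplicity at most $k-1$; concretely one takes $G_i=\bigcup\{D_{j_1}\cap\dots\cap D_{j_{k-1}}\cap D_i:j_1<\dots<j_{k-1}<i\}$, a finite union of finite sets, and checks that a point $m$ lying in $k$ of the sets $D_{i_1}\setminus G_{i_1},\dots,D_{i_k}\setminus G_{i_k}$ (with $i_1<\dots<i_k$) would be forced into the term of $G_{i_k}$ with $j_t=i_t$. Applying this lemma to $\{D_B:B\in\B_\infty\}$ and adjoining the finite set $D_B^{(k)}$ to each $G_B$, I get finite $G_B\subseteq D_B$ so that the sets $D'_B:=D_B\setminus G_B$ have multiplicity $\leq k-1$ and satisfy $c_B\leq k-1$ on $D'_B$.

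Now define $E=\bigcup_{B\in\B_\infty}\bigcup_{m\in D'_B}(B\cap V_m)$. By the bounds above every column $E\cap V_m$ is a union of at most $k-1$ pieces $B\cap V_m$ of size $\leq k-1$, so $\abs{E\cap V_m}\leq(k-1)^2$ and therefore $E\in\sel$. For $B\in\B_\infty$, $B\setminus E$ meets only the finitely many columns in $G_B$ (as $B\cap V_m\subseteq E$ for every $m\in D'_B$), and for $B\notin\B_\infty$ even $B\setminus E\subseteq B$ meets only finitely many columns. Consequently, for every graph $g$ and every $B\in\B$, the set $(g\cap B)\setminus E=g\cap(B\setminus E)$ is finite, so $g\setminus E\in\ofinp\B$; since graphs generate $\sel$ and $\ofinp\B$ is an ideal, $I\setminus E\in\ofinp\B$ for all $I\in\sel$, i.e.\ $\rsub{\sel}{\ofinp\B}$, completing (i).

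For (ii) I argue by contradiction. Fix for each $k$ a tower $s^k_0,\dots,s^k_{k-1}$ --- pairwise disjoint selectors of $\{V_m:m\in D_k\}$ with $s^k_i\subseteq B^k_i\in\B$ --- and set $T_k=\bigcup_{i<k}s^k_i\in\sel$. If some $S\in\sel$ had $T_k\setminus S\in\ofinp\B$ for all $k$, fix $l$ with $\abs{S\cap V_m}\leq l$ for all $m$ and take $k=l+1$: for each $m\in D_k$ the $k>l$ points of $T_k\cap V_m$ cannot all lie in $S$, so $T_k\setminus S$ contains a point $(m,s^k_{i(m)}(m))\in B^k_{i(m)}$; as $D_k$ is infinite and $i(m)<k$, some $i^\ast$ recurs infinitely often, so $(T_k\setminus S)\cap B^k_{i^\ast}$ is infinite, contradicting $T_k\setminus S\in\ofinp\B$. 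Hence $\{T_k:k\in\omega\}$ has no $\ofinp\B$-union in $\sel$. The main obstacle is part (i), specifically the shrinking lemma: the hypothesis gives only the $k$-wise almost disjointness (and eventually small fibres) of the shadows $D_B$ of wide $\B$-elements, and the work lies in converting this into one bounded-width $E\in\sel$ that absorbs every graph modulo $\ofinp\B$; the tower extractions and the pigeonhole argument in (ii) are routine. For a general partition $\A$ the same proof goes through with ``columns $V_m$'' replaced throughout by ``members of $\A$'', as remarked in the paper.
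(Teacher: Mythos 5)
Your proof is correct, and it routes the substantive direction through $\rsub{\sel}{\ofinp{\B}}$ exactly as the paper does (this is condition (2) of Theorem~\ref{veze}, with Proposition~\ref{ortaink}(b) doing the final step), and the ``only if'' direction via an explicit countable witnessing family, as in Proposition~\ref{sufficient}(D). The interesting divergence is in the combinatorial heart, the construction of the $\ofinp{\B}$-union $E\in\sel$. The paper works at the level of functions: it builds a countable family $\G=\{g^j_{B,i}\}$ of monochromatic partial functions covering, for each color $B$ and each fiber-width $i\leq k$ occurring on infinitely many columns, the piece of $B$ over those columns; it then trims each $g_i$ to $h_i$ by deleting the (finite) part of its domain shared with $k$ earlier domains, so that $H=\bigcup h_i$ has all columns of size $\leq k$, and deduces $g_i=^*h_i$. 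You instead work at the level of shadows $D_B=\{m:B\cap V_m\neq\emptyset\}$: you first extract that $\{D_B\}$ is $k$-wise almost disjoint and that $c_B\geq k$ only finitely often, then prove a clean standalone shrinking lemma for countable $k$-wise almost disjoint families (delete a finite $G_i\subseteq D_i$ to reduce the multiplicity to $\leq k-1$), and finally take $E$ to be the union of the \emph{entire} column-pieces $B\cap V_m$ over the shrunk shadows $D'_B$, with the $(k-1)^2$ column bound. Your set $E$ is generally larger than the paper's $H$ (it swallows whole fibers $B\cap V_m$, not just the points lying on the chosen generating functions), and that is fine. What your version buys is modularity: the $k$-wise-almost-disjoint shrinking lemma is a reusable combinatorial fact, and the verification that $E$ works reduces to the transparent observation that $B\setminus E$ lies over finitely many columns. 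What the paper's version buys is that it stays literally inside the generating structure of $\sel$ --- the set $H$ is itself a union of (trimmed) monochromatic functions, which makes the $=^*$ bookkeeping and the decomposition $B=C_B\cup D_B$ very explicit. Both are valid; I would only note that the quantifier shift you used (``no $(\omega,k)$-tower'' vs.\ the paper's ``no $(\omega,m)$-tower for $m>k$'') is a harmless shift of $k$ by one, and that your pigeonhole for the ``only if'' direction is a mild repackaging of Lemma~\ref{omegaveze}.
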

\par
In Section~\ref{towers_equiv}, we add copies of the~ideal $\ED$ into consideration. We show that in two cases, the~orthogonality is equivalent to the P-property for two ideals. Thus, in Theorems~\ref{ref1} and~\ref{ed_ofin} we prove the~following.
\begin{theorem*}
\
\begin{enumerate}[\rm (a)]
    \item $\EDp{\A}$ is a $\mathrm{P}(\finop{\B})$-ideal if and only if $\EDp{\A}\perp\finop{\B}$.
    \item $\EDp{\A}$ is a $\pid{\ofinp{\B}}$-ideal if and only if $\EDp{\A}\perp\ofinp{\B}$.
\end{enumerate}
\end{theorem*}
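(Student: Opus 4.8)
The plan is to handle both equivalences together, doing the easy half for arbitrary ideals and then attacking the converses in contrapositive form. For the implication ``$\EDp{\A}\perp\J\Rightarrow\EDp{\A}$ is a $\pid{\J}$-ideal'' ($\J$ being $\finop{\B}$ in~(a) and $\ofinp{\B}$ in~(b)) nothing special about $\EDp{\A}$ is needed: if $\omega=P\cup Q$ with $P\cap Q=\emptyset$, $P\in\EDp{\A}$ and $Q\in\J$, then for every family $\set{I_n}{n\in\omega}\subseteq\EDp{\A}$ the set $P$ is a $\J$-union, since $I_n\setminus P\subseteq Q\in\J$ for each $n$; this is an instance of Proposition~\ref{ortaink}. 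So the real content is the converse, which I would prove by contraposition: assuming $\EDp{\A}\not\perp\J$, I will exhibit a countable family in $\EDp{\A}$ having no $\J$-union inside $\EDp{\A}$.

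For~(a) the first step is to translate $\EDp{\A}\not\perp\finop{\B}$ combinatorially. Since a set lies in $\finop{\B}$ exactly when it is covered by finitely many elements of~$\B$, and $E\in\EDp{\A}$ exactly when $\abs{E\cap A}\leq k$ for some $k$ and all but finitely many $A\in\A$, the failure of orthogonality says precisely that \emph{for every finite $\B'\subseteq\B$ and every $k\in\omega$ there are infinitely many $A\in\A$ with $\abs{A\setminus\bigcup\B'}>k$}. Using this I would recursively pick pairwise distinct $A^{(m)}\in\A$ and sets $P_m=\set{p_{m,j}}{j\leq m}\subseteq A^{(m)}\setminus(B_0\cup\dots\cup B_m)$ of size $m+1$, and define
\[
I_n=\set{p_{m,n}}{m\geq n}\in\selp{\A}\subseteq\EDp{\A}\qquad(n\in\omega).
\]
Then no $E\in\EDp{\A}$ is a $\finop{\B}$-union of $\set{I_n}{n\in\omega}$: if $\abs{E\cap A}\leq k$ for $A$ outside a finite $\A_0$ and $I_n\setminus E\subseteq B_0\cup\dots\cup B_{M_n}$ for every $n\leq k$, then choosing $m\geq k$ with $m>\max_{n\leq k}M_n$ and $A^{(m)}\notin\A_0$ forces $p_{m,0},\dots,p_{m,k}\in E$ (each $p_{m,n}$ lies in $I_n$ yet outside $B_0\cup\dots\cup B_m$), whence $\abs{E\cap A^{(m)}}\geq k+1$, a contradiction.

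Case~(b) follows the same template with $\ofinp{\B}$: now a $\J$-union $E\in\EDp{\A}$ of a family is one for which each $I_n\setminus E$ meets every $B\in\B$ finitely, and $\EDp{\A}\not\perp\ofinp{\B}$ means that no $E\in\EDp{\A}$ is cofinite on every $B\in\B$. I would distill this into the statement that \emph{for every $k$ and every finite $\A_0\subseteq\A$ there is $B\in\B$ with $B\setminus\bigcup\A_0$ infinite and $\abs{A\cap B}>k$ for infinitely many $A\in\A$} (i.e.\ $B$ cannot be absorbed by an $\EDp{\A}$-set of ``shape'' $(k,\A_0)$), and then build a diagonal family whose $m$-th layer is, depending on how $B\setminus\bigcup\A_0$ is spread across~$\A$, either a finite union of elements of~$\A$ containing an infinite subset of such a $B$, or a partial selector of~$\A$ running inside a single $B$. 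Either way the layer keeps an infinite intersection with some $B\in\B$ after subtracting any candidate $E$ whose shape is bounded by~$m$, so the diagonalisation defeats all candidates.

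I expect the principal obstacle to be exactly this bookkeeping: a prospective $\J$-union $E\in\EDp{\A}$ carries two parameters — a uniform bound $k$ and a finite exceptional subfamily of~$\A$ — so the recursion must at stage~$m$ install enough structure (here $m+1$ points in a fresh piece placed far out in the enumeration of~$\B$) to exclude every $E$ with parameters dominated by~$m$; only after passing to the diagonal are all $E$ ruled out. In~(b) there is an extra delicacy: an $\ofinp{\B}$-union is \emph{co}-finite on each element of~$\B$, so escaping it means producing an infinite set lying inside one element of~$\B$ and yet boundedly met by each element of~$\A$ (to stay in $\EDp{\A}$), and squeezing such sets out of the mere failure of orthogonality is the step that needs the most care.
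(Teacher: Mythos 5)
Your proof of part (a) is correct and takes a genuinely different route from the paper. The paper factors $\EDp{\A}=\finop{\A}\lor\selp{\A}$, invokes Proposition~\ref{differJ} to reduce the P-property for $\EDp{\A}$ to the P-properties of $\finop{\A}$ and $\selp{\A}$ separately, then settles those via Corollary~\ref{prehlcor}, Proposition~\ref{sufficient}, and a tower construction (Theorem~\ref{ref1}). You instead translate $\EDp{\A}\not\perp\finop{\B}$ into the statement that for every finite $\B'\subseteq\B$ and every $k$ there are infinitely many $A\in\A$ with $\abs{A\setminus\bigcup\B'}>k$, and use this to build a single countable family $\set{I_n}{n\in\omega}\subseteq\selp{\A}$ of partial selectors that no $E\in\EDp{\A}$ can $\finop{\B}$-dominate. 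The only slip is cosmetic: $I_n\setminus E\in\finop{\B}$ gives $I_n\setminus E\subseteq^*B_0\cup\dots\cup B_{M_n}$, not containment; since you are free to take $m$ larger, the pigeonhole still closes. This direct, self-contained argument avoids the paper's detour through towers of monochromatic functions.

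Part (b) has a genuine gap. Your combinatorial distillation of $\EDp{\A}\not\perp\ofinp{\B}$ --- ``for every $k$ and finite $\A_0\subseteq\A$ there is $B\in\B$ with $B\setminus\bigcup\A_0$ infinite and $\abs{A\cap B}>k$ for infinitely many $A\in\A$'' --- is not implied by non-orthogonality. Take $\A=\V=\set{\{n\}\times\omega}{n\in\omega}$ (verticals) and $\B=\set{\omega\times\{n\}}{n\in\omega}$ (horizontals). Then $\abs{A\cap B}=1$ for every $A\in\A$, $B\in\B$, so your condition fails already for $k=1$, $\A_0=\emptyset$. Yet $\EDp{\A}=\ED$ is not orthogonal to $\ofinp{\B}$: a set $E\in\ED$ with $\abs{E\cap V_m}\leq k$ for $m$ outside a finite set cannot be cofinite on $k+1$ horizontals simultaneously, for then some far-out column of $E$ would contain $k+1$ points. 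So your diagonalization scheme never gets off the ground in exactly the kind of example your own sketch later alludes to (``a partial selector of $\A$ running inside a single $B$''): the advertised combinatorial property rules that case out, while the subsequent construction needs it. The failure of orthogonality here is driven not by $\B$-classes meeting $\A$-classes heavily, but by a single $B$ meeting infinitely many $A$-classes lightly, and the property you extract does not see this. The paper sidesteps the issue by again decomposing $\ED=\fino\lor\sel$, reducing via Proposition~\ref{differJ}, handling $\sel$ through the tower characterization of Theorem~\ref{veze} and $\fino$ through Corollary~\ref{prehlcor}, and finally converting $\rsub{}{}$ to orthogonality using the tall/nowhere-tall Lemma~\ref{nowheretall}. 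To salvage your direct approach you would need a correct translation of non-orthogonality --- something on the order of ``for every $k,\A_0$ there is $B$ such that no $E$ picking at most $k$ points from each $A\notin\A_0$ covers $B\setminus\bigcup\A_0$ cofinitely,'' which further splits into the case where some $B$ meets some $A\notin\A_0$ infinitely, the case where infinitely many $A\notin\A_0$ meet $B$ in more than $k$ points, and the selector-like case exemplified above --- and then a construction that handles all branches.
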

\par
Finally, the~last section is devoted to the~construction of a~partition~$\E$, and to the~non-trivial proof of $\sel$ being not a~$\pid{\selp{\E}}$-ideal, see Theorem~\ref{tazkaThm}. However, the~partition~$\E$ is constructed in a~way that restricts the presence of monochromatic towers to extremely small cases. Thus, our convenient combinatorial property is either not enough to characterize all cases in which the~ideal $\sel$ satisfies P-property for two ideals, or additional assumptions must be considered.


\section{Introducing P-property for two ideals}

The~aim of this~section is to recall basic terminology and outline the basic behavior of the P-property for two arbitrary ideals. All the~other sections are devoted to the study of the P-property for two ideals induced by partitions. Here, we shall summarize the~results appearing in the~literature, and we shall extend the list with our own observations.
\par
Let us begin with recalling basic terminology and notation on ideals.  We use $(\forall^\infty n\in\omega)$ to abbreviate ``for all but finitely many $n$'s'' and $(\exists^\infty n\in\omega)$ is an abbreviation for ``there exist infinitely many $n$'s''. Let $M$ be a non-empty infinite set. An \textbf{ideal} on $M$ is a family $\I\subseteq \PP(M)$ closed under taking subsets and finite unions, i.e., $\I$ is an ideal if $B\in\I$ for any $B\subseteq A\in\I$ and $A\cup B\in\I$ for any $A,B\in\I$. Moreover, we assume that all ideals are proper ($M\notin \I$) and contain all finite subsets of $M$. We are interested mostly in ideals on $\omega$. However, we study these ideals also on other countable sets $M$, e.g., $\omega\times\omega$, by identifying $M$ with $\omega$ via a fixed bijection. The ideal of all finite subsets of $M$ is denoted by $\fin$, where the underlying set $M$ is usually evident from the context. Calligraphic $\I,\J$ are used exclusively to denote ideals on a countable set. A~family $\D\subseteq\I$ is a \textbf{base} of $\I$ if for any $A\in\I$ there is $D\in\D$ such that $A\subseteq D$. $\D$ is a~\textbf{subbase} for $\I$ if the family of all finite unions of members of $\D$ is a base of $\I$. 
\par
For any $A, B\subseteq M$ and an ideal $\J$ on $M$, we write $A\subseteq^\J B$ if $A\setminus B\in\J$, and $A=^\J B$ if $A\subseteq^\J B$ and $B\subseteq^\J A$, i.e., if $A\triangle B\in\J$, where $\triangle$ stands for the symmetric difference. We use the notation $A\subseteq^*B$ instead of $A\subseteq^\fin B$. We recall from Definition~\ref{definiciaPJ} that $\I$ is a~$\boldsymbol{\mathrm{P}(\J)}$\textbf{-ideal}\footnote{We use the notation from \cite{filipow}, although the notion was introduced in \cite{macslez} as $\mathrm{AP}(\I,\J)$ property.} if for any family $\{I_n:n\in\omega \}\subseteq\I$ there is $I\in\I$ such that $I_n\subseteq^\J I$ for each $n\in\omega$. Note that for a~base~$\D$ of $\I$ we have that $\I$ is $\mathrm{P}(\J)$ if and only if for every family $\{D_n:n\in\omega \}\subseteq\D$ there is $D\in\D$ such that $D_n\subseteq^\J D$ for every~$n$. It is common to use the term $\mathrm{P}$-ideal instead of $\mathrm{P}(\fin)$-ideal. 
\par
We include a~list of reformulations of the~definition by M.~Mačaj and M.~Sleziak~\cite{macslez}. Let us recall that $\I^*$ is the~\textbf{dual filter} to $\I$, i.e., $\I^*=\{M\setminus I:I\in \I \}$.
\par
\begin{lema}[M. Mačaj -- M. Sleziak]
Let $\I$ and $\J$ be ideals on the same set $M$. The following conditions are equivalent:
\begin{enumerate}[(1)]
    \item $\I$ is a $\pid{\J}$-ideal.
   \item Any family $\{F_n:n\in\omega \}\subseteq\I^*$ has $\J$-intersection\footnote{We say that a set $X$ is $\K$-intersection of a family $\{X_n:n\in\omega \}$ if $X\subseteq^\K X_n$ holds for each $n\in\omega$. $\K$-intersection is "$\K$-pseudointersection" in \cite{macslez}. } in $\I^*$.
    \item For every family $\{I_n:n\in\omega \}\subseteq\I$ there is a family $\{J_n:n\in\omega\} \subseteq\I$ such that $I_n=^\J J_n$ for $n\in\omega$ and $\bigcup_{n\in\omega}J_n\in\I$. 
     \item For every family of mutually disjoint sets $\{I_n:n\in\omega \}\subseteq\I$ there is a family $\{J_n:n\in\omega\} \subseteq\I$ such that $I_n=^\J J_n$ for $n\in\omega$ and $\bigcup_{n\in\omega}J_n\in\I$. 
     \item For every family $\{I_n:n\in\omega \}\subseteq\I$ such that $I_n\subseteq I_{n+1}$ for each $n\in\omega$ there is a family $\{J_n:n\in\omega\} \subseteq\I$ such that $I_n=^\J J_n$ for $n\in\omega$ and $\bigcup_{n\in\omega}J
     _n\in\I$. 
    \item In the Boolean algebra $\mathcal{P}(M)/\J$ the ideal $\I$ corresponds to a $\sigma$-directed subset\footnote{I.e., it contains an upper bound of each countable subset.}.
\end{enumerate}
\end{lema}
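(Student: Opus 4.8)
The plan is to run a cycle of implications among $(1)$, $(3)$, $(4)$, $(5)$ and then read off $(2)$ and $(6)$ as direct reformulations of $(1)$. The only facts I would use repeatedly are that $\J$, being an ideal, is closed under finite unions (so a finite union of $\J$-small sets is $\J$-small), that $A=^\J B$ means $A\triangle B\in\J$, and the set-theoretic inclusion $\bigl(\bigcup_k A_k\bigr)\triangle\bigl(\bigcup_k B_k\bigr)\subseteq\bigcup_k(A_k\triangle B_k)$.

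For $(1)\Rightarrow(3)$, given $\{I_n:n\in\omega\}\subseteq\I$ I would pick $I\in\I$ with $I_n\subseteq^\J I$ for all $n$ and set $J_n=I_n\cap I$; then $J_n\in\I$, $\bigcup_n J_n\subseteq I\in\I$, and $I_n\triangle J_n=I_n\setminus I\in\J$. The step $(3)\Rightarrow(4)$ is trivial, $(4)$ being the special case of $(3)$ for disjoint families. For $(4)\Rightarrow(5)$, from an increasing family $\{I_n\}$ I would form the pairwise disjoint sets $D_0=I_0$, $D_{n+1}=I_{n+1}\setminus I_n$, all in $\I$, apply $(4)$ to obtain $\{J'_n\}\subseteq\I$ with $D_n=^\J J'_n$ and $\bigcup_n J'_n\in\I$, and put $J_n=\bigcup_{k\le n}J'_k$. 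Then $\bigcup_n J_n=\bigcup_n J'_n\in\I$, and since $I_n=\bigcup_{k\le n}D_k$ we get $I_n\triangle J_n\subseteq\bigcup_{k\le n}(D_k\triangle J'_k)\in\J$. Finally, for $(5)\Rightarrow(1)$, given $\{I_n\}\subseteq\I$ I would apply $(5)$ to the increasing family $I'_n=I_0\cup\dots\cup I_n$, obtaining $\{J_n\}$ with $I'_n=^\J J_n$ and $I:=\bigcup_n J_n\in\I$; then $J_n\subseteq I$ yields $I_n\setminus I\subseteq I'_n\setminus J_n\subseteq I'_n\triangle J_n\in\J$, so $I_n\subseteq^\J I$ and $\I$ is a $\pid{\J}$-ideal.

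It remains to connect $(2)$ and $(6)$. For $(1)\Leftrightarrow(2)$, the map $F\mapsto M\setminus F$ is a bijection between $\I^*$ and $\I$, and for $F=M\setminus I$, $F_n=M\setminus I_n$ one has $F\setminus F_n=I_n\setminus I$; hence $F$ is a $\J$-intersection of $\{F_n\}$ in $\I^*$ precisely when $I$ is a common $\J$-upper bound of $\{I_n\}$ in $\I$, which makes $(1)$ and $(2)$ plainly equivalent. For $(1)\Leftrightarrow(6)$, under the quotient map $A\mapsto[A]_\J$ the image of $\I$ is $\{[I]_\J:I\in\I\}$, and in $\PP(M)/\J$ one has $[A]_\J\le[B]_\J$ iff $A\setminus B\in\J$ iff $A\subseteq^\J B$; thus this image being $\sigma$-directed, i.e., every countable subset $\{[I_n]_\J:n\in\omega\}$ having an upper bound $[I]_\J$ with $I\in\I$, is literally statement $(1)$.

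I do not expect a genuine obstacle here, since every implication is short. The one place that asks for mild care is $(4)\Rightarrow(5)$: the witnesses for the increasing family are reassembled as partial unions of the disjoint witnesses, and one must observe that for each fixed $n$ only finitely many symmetric-difference errors accumulate, so the total error stays in $\J$.
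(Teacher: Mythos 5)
Your proof is correct, and each step checks out: the cycle $(1)\Rightarrow(3)\Rightarrow(4)\Rightarrow(5)\Rightarrow(1)$ is tight, the complementation argument for $(1)\Leftrightarrow(2)$ uses the identity $F\setminus F_n=I_n\setminus I$ correctly, and the translation for $(1)\Leftrightarrow(6)$ correctly identifies the quotient order with $\subseteq^\J$. Note, however, that the paper states this lemma as an attributed result of Mačaj and Sleziak and does not supply a proof, so there is no in-paper argument to compare against; your cyclic decomposition (with the differencing trick $D_{n+1}=I_{n+1}\setminus I_n$ to pass from disjoint to increasing families, and partial unions to pass back) is the standard route and would serve as a complete proof.
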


\par
Some of the basic examples of pairs  of ideals such that $\I$ is a~$\mathrm{P}(\J)$-ideal were pointed out by R.~Filipów and M.~Staniszewski in \cite{stanfil}. Ideals $\I$, $\J$ on $M$ are said to be \textbf{orthogonal}, written $\I\perp\J$, if there is a set $A\subseteq M$ such that $A\in\I$ and $M\setminus A\in \J$.  An ideal $\I$ is \textbf{maximal}, if either $A\in \I$ or $M\setminus A\in \I$ for every $A\subseteq M$ (or equivalently, $\I$ is maximal with respect to ordering by inclusion).
\begin{lema}[R.~Filipów -- M.~Staniszewski]\label{ort}
\
\begin{enumerate}[(a)]
    \item If $\I$ is a $\mathrm{P}$-ideal, then $\I$ is a $\mathrm{P}(\J)$-ideal for every $\J$. 
    \item If $\I,\J$ are orthogonal then $\I$ is $\mathrm{P}(\J)$ and $\J$ is $\mathrm{P}(\I)$.
    \item If $\I,\J$ are maximal then $\I$ is $\mathrm{P}(\J)$ and $\J$ is $\mathrm{P}(\I)$.    
    \item If $\I\subseteq \J$, then $\I$ is a~$\mathrm{P}(\J)$-ideal. In particular, $\I$ is a~$\mathrm{P}(\I)$-ideal for every~$\I$.
    \item If $\J_1\subseteq\J_2$ and $\I$ is a~$\mathrm{P}(\J_1)$-ideal, then $\I$ is a~$\mathrm{P}(\J_2)$-ideal.
\end{enumerate}    
\end{lema}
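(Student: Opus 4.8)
The plan is to dispatch the five items by reducing them to one another, beginning with the most elementary. I would first prove (e): given $\J_1\subseteq\J_2$ and $\{I_n:n\in\omega\}\subseteq\I$, the $\mathrm{P}(\J_1)$-property yields $I\in\I$ with $I_n\setminus I\in\J_1\subseteq\J_2$ for every~$n$, i.e. $I_n\subseteq^{\J_2}I$, so $\I$ is $\mathrm{P}(\J_2)$. Since every ideal contains $\fin$, item (a) is then just the special case $\J_1=\fin$, $\J_2=\J$ of~(e).

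Next I would handle (d). As every ideal contains $\emptyset$, for any $\{I_n:n\in\omega\}\subseteq\I$ the set $I=\emptyset\in\I$ works: $I_n\setminus I=I_n\in\I\subseteq\J$, hence $I_n\subseteq^\J I$. Taking $\J=\I$ gives the ``in particular'' clause. For (b), orthogonality supplies a set $A$ with $A\in\I$ and $M\setminus A\in\J$; then for each $I\in\I$ we have $I\setminus A\subseteq M\setminus A\in\J$, so $A$ is a $\J$-union of all of $\I$, and in particular $I=A$ witnesses $\mathrm{P}(\J)$ for every countable subfamily of~$\I$. The roles of $\I$ and $\J$ are symmetric here, so $\J$ is $\mathrm{P}(\I)$ as well. (Note that in both (b) and (d) the witness is in fact a $\J$-union of the whole ideal $\I$, the situation later denoted $\rsub{\I}{\J}$.)

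Finally, (c) reduces to (b) and (d) via the standard dichotomy that two maximal ideals on $M$ are either equal or orthogonal: if $\I=\J$, apply (d); otherwise pick $A\in\I\setminus\J$, and maximality of $\J$ forces $M\setminus A\in\J$, whence $\I\perp\J$ and (b) applies. I do not expect a genuine obstacle anywhere; the only step that is not purely mechanical is this last dichotomy for maximal ideals, and it is immediate from the definition of maximality.
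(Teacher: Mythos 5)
Your argument is correct in all five parts, and the reductions are economical: (e) first, then (a) as the special case $\J_1=\fin$, (d) via the trivial witness $I=\emptyset$, (b) via a $\J$-union of the whole ideal (indeed what the paper later calls $\rsub{\I}{\J}$), and (c) via the dichotomy ``two maximal ideals are equal or orthogonal.'' The paper itself states this lemma as a citation to Filip\'ow and Staniszewski and includes no proof, so there is nothing in the text to compare against; your proof is a clean, self-contained reconstruction of standard facts, and the only step worth flagging as slightly nonstandard is the observation (which you correctly identify) that (b) and (d) in fact produce a single $\J$-union for all of $\I$, not merely for each countable subfamily.
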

\noindent Taking into account Lemma~\ref{ort}(e), and assuming $\I$ is a~$\mathrm{P}(\J)$-ideal, it would be tempting to conjecture that either $\I'\subseteq\I$ or $\I'\supseteq\I$ leads to $\I'$ being a~$\mathrm{P}(\J)$-ideal. However, it is not necessarily true, see Table~\ref{omxom}. 
\par
In the~following proposition, we generalize parts~(b) and~(d) of Lemma~\ref{ort}, introducing a~unifying element, the~relation~$\rsub{}{}$. Given an~ideal $\I$ on $M$ we denote by $\I^+$ the family of $\boldsymbol{\I}$\textbf{-positive sets} (subsets of $M$ that are not in $\I$), i.e., $\I^+=\PP(M)\setminus\I$. If $X\in\I^+$, we denote by $\I\restriction X$ the ideal $\{I\cap X: I\in\I \}$ on~$X$. Recall that a set $E$ is a $\boldsymbol{\J}$\textbf{-union} of $\I$ if $I\subseteq^\J E$ for each $I\in \I$. Then  we write $\boldsymbol{\rsub{\I}{\J}}$ if the~whole ideal $\I$ has a~$\J$-union in~$\I$. Finally, notice that $\rsub{\I}{\J}$ if and only if
\[
(\exists E\in\I^*)\ \I\restriction E\subseteq\J,
\] 
so if $\I$ is almost a subset of $\J$ in a  sense. We use these two equivalent definitions interchangeably throughout the text. However, we prefer to work with the latter one most of the time.

Notice that $\I\subseteq\J$ immediately implies $\rsub{\I}{\J}$. 
\begin{prop}\label{ortaink}
\
\begin{enumerate}[(a)]
    \item If $\I\perp\J$ then $\rsub{\I}{\J}$.
    \item If $\rsub{\I}{\J}$ then $\I$ is a $\mathrm{P}(\J)$-ideal.
\end{enumerate}    
\end{prop}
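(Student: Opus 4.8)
\textbf{Proof plan for Proposition~\ref{ortaink}.}

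The plan is to prove the two parts separately, working throughout with the reformulation $\rsub{\I}{\J}$ $\iff$ $(\exists E\in\I^*)\ \I\restriction E\subseteq\J$, which is the more convenient one for these verifications. For part (a), suppose $\I\perp\J$, so there is $A\subseteq M$ with $A\in\I$ and $M\setminus A\in\J$. The natural candidate for the witnessing set is $E=M\setminus A\in\I^*$. I would then check that $\I\restriction E\subseteq\J$: given $I\cap E$ for $I\in\I$, write $I\cap E = I\setminus A \subseteq M\setminus A\in\J$, hence $I\cap E\in\J$ since $\J$ is closed under subsets. Alternatively, one can argue directly with the $\J$-union formulation: $E=M\setminus A\in\I$, and for any $I\in\I$ we have $I\setminus E = I\cap A\subseteq A$, which need not be in $\J$ --- so the first (restriction) formulation is the cleaner route, and I would present that one. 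This part is essentially immediate.

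For part (b), assume $\rsub{\I}{\J}$ and fix $E\in\I^*$ with $\I\restriction E\subseteq\J$; equivalently, fix a $\J$-union $E'\in\I$ of $\I$ (one can take $E'=M\setminus E$, noting $I\subseteq^\J E'$ for all $I\in\I$ precisely because $I\setminus E' = I\cap E\in\J$). Given an arbitrary countable family $\{I_n:n\in\omega\}\subseteq\I$, the set $E'$ is itself the desired $\mathrm{P}(\J)$-bound: $E'\in\I$ and $I_n\subseteq^\J E'$ for every $n$ by the defining property of a $\J$-union. So a single fixed set works uniformly for every countable subfamily, and the verification is just unwinding definitions.

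Since both parts reduce to short definitional checks, there is no real obstacle here; the only thing requiring a moment's care is getting the complementation right --- keeping straight that $E\in\I^*$ (so $M\setminus E\in\I$), and that ``$\I\restriction E\subseteq\J$'' translates to ``$M\setminus E$ is a $\J$-union of $\I$'' --- and making sure in part (a) that the complement of the $\I$-small set, not the set itself, is what serves as $E$. I would state the two equivalent forms of $\rsub{}{}$ once at the start of the proof to license switching between them, then dispatch (a) and (b) in a couple of lines each.
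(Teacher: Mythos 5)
Your proof is correct and matches the paper's approach: the paper's (a) just notes that $\I\perp\J$ yields $E'\in\I$ with $M\setminus E'\in\J\cap\I^*$, and labels (b) ``straightforward''---precisely the definitional unwinding you carry out. One small correction to your aside in (a): the $\J$-union formulation works just as cleanly with the witness being $A$ itself (not $M\setminus A$, which lies in $\I^*$, not $\I$), since for every $I\in\I$ we have $I\setminus A\subseteq M\setminus A\in\J$, so $A\in\I$ is already the required $\J$-union.
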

\begin{proof}
(a) If $\I\perp \J$ then there is a set $E'\in\I$ such that $E=M\setminus E'\in\J\cap\I^*$.

(b) Straightforward.

\end{proof}
Note that $\rsub{\I}{\J}$ is not necessarily equivalent to $\I\subseteq\J\ \lor\  \I\perp\J$. Before we give a~counterexample, let us recall that
if there is not a finite subfamily $\mathcal{C}$ of a~family $\D$ with finite $M\setminus \bigcup \C$ then \textbf{ideal generated by} $\boldsymbol{\D}$ is the smallest ideal containing $\E$ and $\fin$, and we denote this ideal by $\gen{\D}$. So
$$\gen{\mathcal{D}}=\left\{D\in\mathcal{P}(M): D\subseteq^* \bigcup \mathcal{D}'\text{ for some }  \mathcal{D}'\in[\mathcal{D}]^{<\omega}  \right\}.$$ 
 Note that $M=\bigcup \D$ if and only if $\D$ is a subbase for $\gen{\D}$. The family $\D\cup\fin$ is always a subbase for $\gen{\D}$.
Returning back to the~counterexample, let $\I=\gen{\{K\}}$ for some infinite and coinfinite $K\subseteq \omega$. Since $\omega\setminus K\in \I^*$ and $\I\restriction (\omega\setminus K)\subseteq\fin$, we have $\rsub{\I}{\fin}$. Evidently, $\I\not\subseteq \fin$ and $\fin\not\perp\I$ since $\I$ does not contain any cofinite set. Clearly, the condition $\rsub{\I}{\J}$ is equivalent to 
\[
(\forall\kappa\leq\abs{\I})(\forall \mathcal{E}\in[\I]^\kappa)(\exists E\in \I^*)\ \mathcal{E}\restriction E\subseteq \J.
\]
Actually, using the definition and notation from~\cite{stan}, the~condition $\rsub{\I}{\J}$ is equivalent to any of the~following conditions:\footnote{A notion of a $\kappa\text{-}\mathrm{P}(\J,\I)$-ideal in \cite{stan} is a~modification of the $\pid{\J}$-ideal notion. Let $\I,\J,\K$ be ideals on $M$ and $\kappa$ be a cardinal number. We say that an ideal $\K$ is a $\kappa$-$\mathrm{P}(\J,\I)$-ideal, if for every family $\{K_\alpha\subseteq M:\alpha<\kappa \}\subseteq \K$ there exists $I\in \I$ such that $K_\alpha\setminus I\in \J$ for every $\alpha<\kappa$.} $\I$ is a $\abs{\I}\text{-}\mathrm{P}(\J,\I)$-ideal, $\I$ is a $\mathtt{cof}(\I)\text{-}\mathrm{P}(\J,\I)$-ideal,\footnote{Let us recall that $\mathtt{cof}(\I)=\min\{\abs{\D}:\D\subseteq\I\wedge\D\text{ is a base of }\I \}$.} $\I$ is a~$\kappa\text{-}\mathrm{P}(\J,\I)$-ideal for every~$\kappa$.
\par
We say that an ideal $\I$ on $M$ is \textbf{tall} if for each $X\in[M]^\omega$ there exists $I\in\I$ such that $\abs{X\cap I}=\omega$.  An~ideal $\I$ is \textbf{nowhere tall}, if 
\[
(\forall A\in\I^+)(\exists B\in[A]^\omega)\ \I\restriction B=[B]^{<\omega},
\]
for more see \cite{filtry}. Clearly, $\fino$ and $\ofin$ are nowhere tall ideals. If an ideal $\I$ is tall and an ideal $\J$ is nowhere tall, then~$\rsub{}{}$ is equivalent to the orthogonality:

\begin{lema}\label{nowheretall}
Let $\I,\J$ be ideals on $M$ such that $\I$ is tall and $\J$ is nowhere tall. The following statements are equivalent.
\begin{enumerate}
    \item[\rm (1)] $\I\perp\J$.
    \item[\rm (2)] $\rsub{\I}{\J}$.
\end{enumerate}
\end{lema}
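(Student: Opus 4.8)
The plan is to prove the nontrivial implication $(2)\Rightarrow(1)$, since $(1)\Rightarrow(2)$ is immediate from Proposition~\ref{ortaink}(a). So assume $\rsub{\I}{\J}$, i.e., fix $E\in\I^*$ with $\I\restriction E\subseteq\J$. We want a single set witnessing $\I\perp\J$, namely some $A\in\I$ with $M\setminus A\in\J$. The natural candidate is $A=M\setminus E$, which certainly lies in $\I$; the issue is whether $E=M\setminus A\in\J$. This need not hold on the nose, so the idea is to shrink $E$ using nowhere tallness of $\J$ — but one must be careful to shrink it only by a set that stays in $\I$, so that the complement stays in $\I$.

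First I would observe the following dichotomy for our fixed $E\in\I^*$ with $\I\restriction E\subseteq\J$: either $E\in\J$, in which case $M\setminus E\in\I$ already witnesses $\I\perp\J$ and we are done; or $E\notin\J$, i.e., $E\in\J^+$. In the latter case, apply the nowhere tallness of $\J$ to $A:=E\in\J^+$ to obtain $B\in[E]^\omega$ with $\J\restriction B=[B]^{<\omega}$. Now here is the key point: since $\I$ is tall and $B$ is an infinite subset of $M$, there is $I\in\I$ with $|I\cap B|=\omega$. Put $I_0:=I\cap B$. Then $I_0\in\I$ and $I_0\subseteq B\subseteq E$, so $I_0\in\I\restriction E\subseteq\J$. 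But $I_0$ is an infinite subset of $B$, contradicting $\J\restriction B=[B]^{<\omega}$. Hence the second case is impossible, and we conclude $E\in\J$, so $M\setminus E\in\I\cap\J^*$ gives $\I\perp\J$.

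I expect the main (in fact only) obstacle is making sure the dichotomy is set up correctly: one has to notice that $\rsub{\I}{\J}$ already hands us a set $E$ whose $\I$-restriction sits inside $\J$, and then realize that tallness of $\I$ plus nowhere tallness of $\J$ are precisely what forbids $E$ from being $\J$-positive. Once that is seen, the argument is a short contradiction using the two structural hypotheses in tandem — tallness to inject an infinite piece of $\I$ into any infinite set, and nowhere tallness to kill any such infinite piece inside a suitable $B$. No delicate calculation is involved; the care is entirely in keeping the shrinking set inside $\I$ so that the complement remains in $\I$.
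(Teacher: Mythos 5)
Your proof is correct and uses essentially the same argument as the paper: nowhere tallness of $\J$ to get an infinite $B\subseteq E$ with $\J\restriction B=[B]^{<\omega}$, then tallness of $\I$ to inject an infinite member of $\I$ into $B$, which is then trapped between $\I\restriction E\subseteq\J$ and $\J\restriction B=[B]^{<\omega}$. The only cosmetic difference is that the paper establishes the contrapositive $\neg(1)\Rightarrow\neg(2)$ while you run $(2)\Rightarrow(1)$ directly through a dichotomy-and-contradiction; the substance is identical.
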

\proof Part (1)$\to$(2) follows from Proposition~\ref{ortaink}(a).

$\neg$(1)$\to\neg$(2). If $E\in \I^*$ then $E\notin \J$ by the assumptions. Ideal $\J$ is nowhere tall and therefore there is infinite $E'\subseteq E$ such that $\J\restriction E'=[E']^{<\omega}$. Since $\I$ is tall, there is an infinite $ D\subseteq E'$ such that $D\in \I$. Then $D\in \I\restriction E$ and $D\notin \J$, i.e., $\I\restriction E\not\subseteq\J$.
\qed

It may seem counterintuitive that a tall ideal can be ``almost'' a subset of an obviously less ``dense'' nowhere tall ideal but this situation may occur. Consider, e.g., bijective function $f\colon\omega\times\omega\to\omega\times\omega$ such that $f(\ED)\perp\fino$. In this case $\rsub{f(\ED)}{\fino}$.

Let $\I,\J$ be ideals on $M$. One can easily see that if ideals $\I,\J$ are not orthogonal then the set 
$$
\I\lor \J=\{I\cup J: I\in\I, J\in\J \}
$$
is an ideal on $M$. In fact, $\lor$ is supremum of two non-orthogonal ideals in the family of all ideals on $M$ ordered by inclusion. 

We have already commented that if $\I_1$ is a $\pid{\J}$-ideal and $\K\supseteq \I_1$ then it is not necessarily true that $\K$ is a $\pid{\J}$-ideal, see Table~\ref{omxom}. However, we show that if $\K=\I_1\lor\I_2$ and $\I_2$ is also a $\pid{\J}$-ideal
then it is the case. Note that the~assumption about $\I_1,\I_2$ not being orthogonal should be read as $\I_1\lor\I_2$ is an ideal. Finally, note that if $\K\supseteq \I_1$ then $\K=\I_1\lor\K$, $\I_1,\K$ are non-orthogonal, but it is not necessarily true that $\K\cap \I_1\subseteq\J$.

\begin{prop}\label{differJ}
Let $\J$ be an ideal on $M$ and let $\I_1,\I_2$ be non-orthogonal ideals on $M$ such that $\I_1\cap \I_2\subseteq\J$. The following statements are equivalent.
\begin{enumerate}
    \item[\rm (1)] $\I_1$ and $\I_2$ are $\mathrm{P}(\J)$-ideals. 
    \item[\rm (2)] $\I_1\lor \I_2$ is a~$\mathrm{P}(\J)$-ideal.
\end{enumerate}
\end{prop}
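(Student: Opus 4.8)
The plan is to prove both implications directly from the definition of $\mathrm{P}(\J)$-ideal, using bases of the ideals involved. For the direction (1)$\to$(2), I would start with a countable family $\{K_n:n\in\omega\}\subseteq\I_1\lor\I_2$. By definition of the supremum, each $K_n$ can be written as $K_n=I_n\cup J_n$ with $I_n\in\I_1$ and $J_n\in\I_2$. Applying the $\mathrm{P}(\J)$-property of $\I_1$ to $\{I_n:n\in\omega\}$ yields $I\in\I_1$ with $I_n\subseteq^\J I$ for all $n$; applying the $\mathrm{P}(\J)$-property of $\I_2$ to $\{J_n:n\in\omega\}$ yields $J\in\I_2$ with $J_n\subseteq^\J J$ for all $n$. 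Then $K=I\cup J\in\I_1\lor\I_2$, and since $K_n\setminus K=(I_n\setminus K)\cup(J_n\setminus K)\subseteq(I_n\setminus I)\cup(J_n\setminus J)\in\J$, we get $K_n\subseteq^\J K$. This direction is essentially bookkeeping and does not use the hypothesis $\I_1\cap\I_2\subseteq\J$.

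For the direction (2)$\to$(1), I would prove that $\I_1$ is a $\mathrm{P}(\J)$-ideal (the argument for $\I_2$ being symmetric). Take a family $\{I_n:n\in\omega\}\subseteq\I_1$. Since $\I_1\subseteq\I_1\lor\I_2$, this is also a family in $\I_1\lor\I_2$, so there is $K\in\I_1\lor\I_2$ with $I_n\subseteq^\J K$ for all $n$. Write $K=I\cup J$ with $I\in\I_1$, $J\in\I_2$. The set $I$ is the obvious candidate for a $\J$-bound for $\{I_n\}$ inside $\I_1$, so the task reduces to showing $I_n\subseteq^\J I$. Now $I_n\setminus I\subseteq(I_n\setminus K)\cup(K\setminus I)\subseteq(I_n\setminus K)\cup J$. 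The first term is in $\J$; but the second term $J$ need not be in $\J$. However, $I_n\setminus I\subseteq I_n\subseteq\bigcup\I_1$-stuff, and more precisely $I_n\cap J\subseteq I_n\in\I_1$ while $I_n\cap J\subseteq J\in\I_2$, so $I_n\cap J\in\I_1\cap\I_2\subseteq\J$ — this is exactly where the hypothesis is used. Hence $I_n\setminus I\subseteq(I_n\setminus K)\cup(I_n\cap J)\in\J$, giving $I_n\subseteq^\J I$ as desired.

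The main obstacle — really the only subtle point — is the step just described in (2)$\to$(1): one cannot simply replace the supremum bound $K$ by its $\I_1$-part $I$ and throw away $J$, because $J\notin\J$ in general. The hypothesis $\I_1\cap\I_2\subseteq\J$ is precisely what rescues this, since each $I_n$ lies in $\I_1$ and therefore $I_n\cap J$ lies in $\I_1\cap\I_2$ and hence in $\J$, absorbing the discarded part. I would make sure to state this reduction cleanly and to note that the non-orthogonality assumption is used only to guarantee that $\I_1\lor\I_2$ is genuinely an ideal (so that the statement of (2) makes sense). No transfinite or cardinal-arithmetic considerations are needed; the whole argument is finite combinatorics with countably many sets, and one could equally run it through fixed bases $\D_1,\D_2$ of $\I_1,\I_2$ since $\{D_1\cup D_2:D_1\in\D_1,D_2\in\D_2\}$ is a base of $\I_1\lor\I_2$.
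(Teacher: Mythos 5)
Your proof is correct and follows essentially the same route as the paper's: in (1)$\to$(2) you take pointwise bounds in $\I_1$ and $\I_2$ and union them, and in (2)$\to$(1) you use the same decomposition $I_n\setminus I\subseteq(I_n\setminus K)\cup(I_n\cap J)$ with $I_n\cap J\in\I_1\cap\I_2\subseteq\J$ to absorb the discarded $\I_2$-part. The extra remarks (on where non-orthogonality enters and on running the argument through bases) are accurate but not needed.
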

\proof (1)$\to$(2) For any family $\{K_n:n\in\omega \}\subseteq \I_1\lor \I_2$ there are families $\{I^1_n\in\I_1:n\in\omega \}\subseteq\I_1$ and $\{I^2_n\in\I_2:n\in\omega \}\subseteq\I_2$, such that $K_n=I^1_n\cup I^2_n$ for each $n$. There are $I^1\in\I_1$, $I^2\in\I_2$ such that $I^1_n\setminus I^1\in\J$ and $I^2_n\setminus I^2\in\J$ for each $n$. Then $K_n\setminus K\in \J$ for each $n$, where $K=I^1\cup I^2$.

(2)$\to$(1) Let $\{I_n:n\in\omega \}\subseteq \I_1\subseteq \I_1\lor\I_2$. There is $I^1\in\I_1$ and  $I^2\in\I_2$, such that $I_n\setminus \left(I^1\cup I^2\right)\in \J$ for each $n$. For every $n\in\omega$
$$I_n \setminus I^1\subseteq \left( I_n\setminus \left(I^1\cup I^2\right) \right)\cup \left(I_n\cap I^2\right).$$
$I_n\setminus  \left(I^1\cup I^2\right)\in\J$ because we assume (2) and $\left(I_n\cap I^2\right)\in\J$ from the assumption $\I_1\cap \I_2\subseteq \J$. Thus, $I_n\setminus I^1\in\J$ for every $n\in\omega$. Proof for the ideal $\I_2$ is completely analogous.
\qed

Note that the implication (1)$\to$(2) of Proposition~\ref{differJ} does not depend on the assumption $\I_1\cap\I_2\subseteq\J$. On the other hand, it is crucial for the implication (2)$\to$(1). The counterexample are ideals $\sel$ and $\ofin$ studied in the next section.

By Proposition~\ref{differJ} we obtain the~following.
\begin{corol}\label{finfin}
Let $\I_1,\I_2$ be non-orthogonal ideals on a set $M$ such that $\I_2$ is a $\mathrm{P}(\I_1)$-ideal. Then $\J=\I_1\lor\I_2$ is a $\mathrm{P}(\I_1)$-ideal.
\end{corol}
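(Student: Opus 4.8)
The plan is to derive Corollary~\ref{finfin} directly from Proposition~\ref{differJ} by a suitable specialization of the ideal $\J$. Set $\J:=\I_1$. Then the hypotheses we must verify to apply Proposition~\ref{differJ} (with its $\J$ instantiated to $\I_1$) are: $\I_1,\I_2$ are non-orthogonal on $M$ (given), and $\I_1\cap\I_2\subseteq\I_1$. The latter inclusion is trivial since $\I_1\cap\I_2\subseteq\I_1$ always holds. So the machinery of Proposition~\ref{differJ} is available with $\J=\I_1$.

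Next I would check condition~(1) of Proposition~\ref{differJ} in this instantiation, namely that $\I_1$ and $\I_2$ are both $\mathrm{P}(\I_1)$-ideals. That $\I_2$ is a $\mathrm{P}(\I_1)$-ideal is exactly the hypothesis of the corollary. That $\I_1$ is a $\mathrm{P}(\I_1)$-ideal is immediate from Lemma~\ref{ort}(d), which gives that every ideal $\I$ is a $\mathrm{P}(\I)$-ideal. Hence condition~(1) holds.

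Finally, Proposition~\ref{differJ} yields condition~(2): $\I_1\lor\I_2$ is a $\mathrm{P}(\I_1)$-ideal. Since $\J=\I_1\lor\I_2$ by definition in the statement of the corollary, this is exactly the conclusion. I would write this up in three or four lines, essentially: ``Apply Proposition~\ref{differJ} with its ideal $\J$ taken to be $\I_1$. The inclusion $\I_1\cap\I_2\subseteq\I_1$ is trivial, $\I_1$ is a $\mathrm{P}(\I_1)$-ideal by Lemma~\ref{ort}(d), and $\I_2$ is a $\mathrm{P}(\I_1)$-ideal by assumption; hence $\I_1\lor\I_2$ is a $\mathrm{P}(\I_1)$-ideal.'' There is no real obstacle here — the only thing to be careful about is the name collision between the corollary's ambient data and the symbol $\J$ appearing in Proposition~\ref{differJ}, so in the write-up I would explicitly say ``taking $\J=\I_1$ in Proposition~\ref{differJ}'' to avoid any confusion.
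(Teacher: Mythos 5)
Your proof is correct and takes exactly the same route as the paper, which deduces the corollary by applying Proposition~\ref{differJ} with $\J=\I_1$ (noting that $\I_1\cap\I_2\subseteq\I_1$ trivially and that $\I_1$ is a $\mathrm{P}(\I_1)$-ideal by Lemma~\ref{ort}(d)). No gaps.
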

\par
We conclude the~section with the~connections to cardinal invariants. It can be easily seen that $\I$ is a $\pid{\J}$-ideal if and only if $\mathfrak{b}(\I,\subseteq^\J)\geq\omega_1$\footnote{For any preordered set without maximal elements $\langle X,\leq \rangle$, let
$$
\mathfrak{b}(X,\leq)=\min\{\abs{A}:A\subseteq X, A\text{ is unbounded from above in  }X \}.
$$
} 
(or $\I$ is not a $\pid{\J}$-ideal if and only if $\mathfrak{b}(\I,\subseteq^\J)=\omega$). Note that for cardinal invariant $$\mathtt{add}^*(\I)=\min\{\abs{\E}:\E\subseteq\I \land (\forall X\in \I)(\exists E\in \E)\ E\not\subseteq^* X \}$$ studied in \cite{inv},  we have $\mathfrak{b}(\I,\subseteq^\fin)=\texttt{add}^*(\I)$,  thus the invariant $\mathfrak{b}(\I,\subseteq^\J)$ can be seen as $``\mathtt{add}^\J(\I)"$ -- generalization of the invariant $\mathtt{add}^*(\I)$.


\section{Natural partition and copies of product ideals}\label{natural_finfin}

For each pair among ideals $\fin\times\emptyset$,  $\sel$, $\ED$, $\emptyset\times\fin$, $\finfin$, we shall decide whether it satisfies the P-property for two ideals or not, see Table~\ref{omxom}. The~second part of the~section is an~initiation of the~study of the~same question for isomorphic copies of the above-mentioned ideals. Although all five ideals have been already discussed in the~introduction, including their definition, we include a~more detailed treatment before we reach the~main result of the~present section. Therefore, let us recall yet another viewpoint on their definition not included in the~introduction.
\par
By $\emptyset$ we denote the~ideal $\{ \emptyset\}$. Here we violate our assumption that $\fin$ is a subset of every ideal, but this notation comes in useful, especially when dealing with Fubini products of ideals. Given two ideals $\I,\J$ on~$\omega$, the \textbf{Fubini product} $\I\times\J$ is defined by
$$\I\times\J=\{A\subseteq\omega\times\omega: \{n:\{m:\langle n,m\rangle\in A \}\notin\J \} \in\I\}.$$
Thus, $\fin\times\emptyset$, $\emptyset\times\fin$, $\finfin$ can be seen also as Fubini products of ideals in $\{\fin, \emptyset\}$. This is, in fact, a justification for the~choice of the notation. A~direct definition of $\sel$ and $\ED$ can be expressed as follows:
\begin{align*}
 \sel=&\ \{X\subseteq \omega\times\omega: (\exists m\in\omega)(\forall k\in\omega)\ \abs{\{l:\ev{k,l}\in X\}}\leq m  \},\\
\ED=&\ \{X\subseteq \omega\times\omega: (\exists m\in\omega)(\forall^\infty k\in\omega)\ \abs{\{l:\ev{k,l}\in X\}}\leq m  \}\\
 = &\ \{X\subseteq\omega\times\omega: \limsup_{k\to\infty}\abs{(\{ k\}\times\omega)\cap X}<\infty \}.
\end{align*}

Ideal $\E\D$ can be thought of as an ideal on $\omega$ generated by infinite partition of $\omega$ into infinite sets and selectors of this partition. Considering the ideal $\sel$, if we identify functions with their graphs, we get another equivalent definition $\sel=\gen{{^\omega\omega}}$. 

The~following diagram shows relationships between ideals with respect to the inclusion relation.
\begin{center}
\begin{figure}[H]
\centering
\begin{tikzpicture}
  \matrix (m) [matrix of math nodes, row sep=3em,
    column sep=3em]{
    \fino&\ED&\finfin&\\
  \fin & \sel &\ofin&\\};
  \path[-stealth]
  (m-1-1) edge (m-1-2)
  (m-2-1) edge (m-1-1) edge (m-2-2)
  (m-2-2) edge (m-1-2) edge (m-2-3)
  (m-1-2) edge (m-1-3)
  (m-2-3) edge (m-1-3);
\end{tikzpicture}
   \caption{Inclusions between critical ideals on $\omega\times\omega$.}
   \label{inclusion} 
 \end{figure}
\end{center}

Moreover, recall that 
\begin{center}
    $\sel=\ED\cap(\ofin)$,\hspace{0.5cm}    $\ED=(\fin\times\emptyset)\lor\sel$,\hspace{0.5cm} $\fin\times\fin=(\fin\times\emptyset)\lor (\emptyset\times\fin)$.
\end{center}

The~following table shows which pairs~$\I,\J$ among $\fin\times\emptyset$, $\emptyset\times\fin$, $\finfin$,  $\sel$, and $\ED$ satisfy that $\I$ is a~$\pid{\J}$-ideal. The~reasoning can be found below Table~\ref{omxom}. In fact, many of the results come from the~fact that $\I\subseteq \J$ yields $\I$ is a $\pid{\J}$-ideal.

 \begin{center}
 \begin{table}[H]
\centering
\begin{tabular}{|c|c|c|c|c|c|c|}
\hline
 & $\mathrm{P}$  & $\mathrm{P}(\fin\times\emptyset)$&  $\mathrm{P}(\sel)$   & $\mathrm{P}(\E\D)$ &  $\mathrm{P}(\emptyset\times\fin)$ &  $\mathrm{P}(\fin\times\fin)$\\
 \hline\hline
  $\fin\times\emptyset$ & \xmark  & \cmark & \xmark &\cmark & \xmark & \cmark  \\
  \hline
  $\sel$ & \xmark  & \xmark & \cmark &\cmark & \cmark & \cmark\\
  \hline
  $\E\D$ & \xmark   & \xmark & \xmark &\cmark & \xmark &\cmark\\
  \hline
  $\emptyset\times\fin$& \cmark  &  \cmark & \cmark &\cmark& \cmark  & \cmark\\
  \hline
  $\fin\times\fin$ & \xmark  & \cmark & \xmark & \cmark & \xmark  &\cmark \\
  \hline
\end{tabular}
   \caption{$\mathrm{P}(\J)$ interactions between critical ideals on $\omega\times\omega$ }
   \label{omxom} 
 \end{table}
\end{center}

Below we comment on the results in Table~\ref{omxom} row by row. In fact, all the~positive statements are based on one of the three following reasons or their combinations: inclusion between ideals (Proposition~\ref{ort}(d), and Diagram~\ref{inclusion}), the~fact that $\ofin$ is a $\mathrm{P}$-ideal (see, e.g., \cite{Hr}), and finally, $\fin\times\fin$ is an ideal $(\fin\times\emptyset)\lor(\emptyset\times\fin)$ (Lemma~\ref{finfin}). In particular, the~first among the~latter reasons yields that $\I$ is a~$\pid{\I}$-ideal, and therefore these pairs are excluded from the~further considerations (a~diagonal in Table~\ref{omxom}). In case of negative statement, the~corresponding reasoning is included.
\begin{itemize}
    \item Since $\ofin$ is a $\mathrm{P}$-ideal, it is also $\pid{\I}$ for all ideals $\I$ on $\omega\times\omega$ by Lemma~\ref{ort}(d).
    \item $\fin\times\emptyset$ is a subset of $\fin\times\fin$ and also $\ED$, therefore it is a $\pid{\finfin}$-ideal and $\pid{\ED}$-ideal. A sequence $A_n=\{n \}\times\omega$ is a witness for $\fin\times\emptyset$ being not a $\mathrm{P}(\emptyset\times\fin)$-ideal and since $\sel\subseteq\emptyset\times\fin$, we have that $\fin\times\emptyset$ is not a $\pid{\sel}$-ideal either.
    \item $\fin\times\fin$ is an ideal $(\fin\times\emptyset)\lor(\emptyset\times\fin)$. It follows from Lemma~\ref{finfin} that $\fin\times\fin$ is a $\mathrm{P}(\fin\times\emptyset)$-ideal and since $\fin\times\emptyset\subseteq\E\D$ we have that $\fin\times\fin$ is also a $\mathrm{P}(\E\D)$-ideal. A sequence $A_n=\{n \}\times\omega$ is a witness for $\fin\times\fin$ being not a $\mathrm{P}(\emptyset\times\fin)$-ideal and hence $\fin\times\fin$ is not a $\pid{\sel}$-ideal.
   \item $\sel$ is a subset of $\ofin$, $\finfin$ and $\ED$ and hence is a $\pid{\ofin}$, $\pid{\finfin}$ and $\pid{\ED}$-ideal. A sequence $\omega\times\{n\}$ is a witness for $\sel$ not being a $\pid{\fin}$ and $\pid{\fino}$-ideal. 
    \item $\ED$ is a subset of the ideal $\fin\times\fin$ so $\ED$ is a $\mathrm{P}(\fin\times\fin)$-ideal. A sequence $A_n=\{ n \}\times\omega$ is a witness for $\E\D$ not being a $\mathrm{P}(\emptyset\times\fin)$-ideal and hence $\E\D$ is not a $\pid{\sel}$-ideal. $\E\D$ is not a $\mathrm{P}(\fin\times\emptyset)$-ideal either and a witness is a sequence $A_n=\omega\times\{n \}$.
\end{itemize}

The~rest of the~paper is devoted to an~investigation of when the~isomorphic copies of ideals $\fin\times\emptyset$, $\emptyset\times\fin$, $\finfin$,  $\sel$, and $\ED$ satisfy P-property for two ideals. Let us recall that ideals $\I$ and $\J$  on $X$ and $Y$, respectively, are \textbf{isomorphic}, written $\I\simeq\J$, if there exists a bijection $f\colon X\to Y$, such that $I\in\I\equiv f[I]\in\J$ for every $I\in\I$ (or equivalently $\J=f(\I)=\{f[I]:I\in\I \}$) and we call such a bijection an~\textbf{isomorphism}.
\par
The~situation rapidly changes when it comes to isomorphic copies in comparison with original ideals $\fin\times\emptyset$, $\emptyset\times\fin$, $\finfin$,  $\sel$, and $\ED$. Now, before the examination of the situation, we shall make an important remark -- we state our results for ideals defined via arbitrary partition $\A$ (or $\B$), and a particular vertical partition $\V$. However, this is not a setback since the results are true for two arbitrary partitions, i.e., for the general formulation one can think of $\V$ being an arbitrary partition.

Let us begin with a~basic observation. 
\begin{lema}\label{existujeInkIzo}
Let $M,N$ be countable sets. For each pair of ideals $\I$ on $M$ and $\J\neq \fin$ on $N$ such that $\I$ is not tall there is an isomoprhism $f$ such that $f(\I)\subseteq \J$.
\end{lema}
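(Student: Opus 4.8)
The plan is to build the isomorphism $f\colon M\to N$ directly, exploiting the fact that a non-tall ideal $\I$ must be "trivial" on some infinite piece of $M$. First I would invoke non-tallness: since $\I$ is not tall, there is an infinite set $X\in[M]^\omega$ witnessing this, meaning every infinite subset of $X$ is $\I$-positive; equivalently $\I\restriction X=[X]^{<\omega}$. (This is exactly the phenomenon recorded in the excerpt's discussion of nowhere tall ideals, but here we only need one such $X$, not all of them.) Split $M=X\sqcup Y$ where $Y=M\setminus X$ is countable (possibly finite); note $Y\in\I^+$ as well unless $Y$ is finite, but that will not matter.

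Next I would set up the target side. Since $\J\neq\fin$, there is an infinite set $B\in\J$; moreover, because $\J$ contains all finite sets and is an ideal, $N\setminus B$ is still infinite or we can enlarge $B$ inside $\J$ — more carefully, pick an infinite $B\in\J$, and observe that we may assume $N\setminus B$ is infinite (if $N\setminus B$ were finite, then $B$ would be cofinite and hence $N\in\J$, contradicting properness). So write $N=B\sqcup C$ with $B\in\J$ infinite and $C$ infinite. Now I would define $f$ by sending $X$ bijectively onto $B$ and $Y$ bijectively onto $C$; this is possible since all four sets $X,Y,B,C$ are countably infinite (if $Y$ happens to be finite one instead absorbs it, splitting $X$ to cover the deficit — a routine adjustment, or one simply notes $M$ is infinite so at least one of $X$, $Y$ is infinite and rebalances).

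The verification that $f(\I)\subseteq\J$ is then the heart of the argument. Take $I\in\I$. Then $I\cap X\in\I\restriction X=[X]^{<\omega}$ is finite, so $f[I\cap X]\subseteq B$ is finite. And $f[I\cap Y]\subseteq C\subseteq N$ is an arbitrary subset of $C$, which need not lie in $\J$ — so this naive splitting does \emph{not} work, and this is precisely the main obstacle: the part of $I$ living in $Y$ is not controlled at all. The fix is to make $f$ send \emph{all of $Y$} into $\J$ as well, which forces us to choose the target decomposition differently: we need $f[Y]\in\J$, but $\J$ is a proper ideal and $Y$ may be (co)infinite, so $f[Y]$ cannot be a single set of $\J$ in general.

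\textbf{Corrected approach.} The right move is to use that $\I$ is non-tall to get $X$ with $\I\restriction X$ trivial, and then map the \emph{$\I$-positive complement} cleverly. Concretely: since $\I$ is not tall there is $X\in[M]^\omega$ with every infinite subset of $X$ being $\I$-positive. Pick any bijection $g\colon M\setminus X\to$ (some set) is not the point; instead, list $X=\{x_k:k\in\omega\}$ and list $M\setminus X=\{y_k:k\in\omega\}$ (handling the finite case trivially). Choose an infinite $B\in\J$ and an infinite $C=N\setminus B$ with $C\notin\J$-irrelevant; actually choose $B\in\J$ infinite with infinite complement as above. The key realization: for $I\in\I$, the set $I\cap X$ is finite, so only finitely many $x_k$ lie in $I$. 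Therefore define $f$ so that it maps $X$ onto $C$ and $M\setminus X$ onto $B$. Then for $I\in\I$: $f[I]\subseteq f[I\cap X]\cup f[M\setminus X]\subseteq f[I\cap X]\cup B$, where $f[I\cap X]$ is finite (since $I\cap X$ is finite) and $B\in\J$, hence $f[I]\in\J$ because $\J$ contains $\fin$ and is closed under finite unions. Thus $f(\I)\subseteq\J$, and $f$ is a bijection $M\to N$ by construction. The finite-case bookkeeping (when $M\setminus X$ or $N\setminus B$ is finite) is the only remaining nuisance and is dispatched by rebalancing finitely many points between the two blocks — I expect this to be entirely routine, so the one genuine idea is: \emph{hide the uncontrolled part $M\setminus X$ inside a single $\J$-set $B$, and send the "$\I$-trivial" part $X$ to the leftover $C$, where $\I\restriction X=[X]^{<\omega}$ guarantees each $I\in\I$ meets $X$ finitely.}
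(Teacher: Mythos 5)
Your corrected approach is exactly the paper's proof: take an infinite $X$ with $\I\restriction X=[X]^{<\omega}$ (which is what non-tallness supplies), and build a bijection sending $M\setminus X$ into an infinite set $B\in\J$ and $X$ onto the rest of $N$; then any $I\in\I$ maps into $B$ together with a finite set, hence lands in $\J$. The false start you walk through and then repair is extra commentary, and the finite-cardinality rebalancing you flag is a minor bookkeeping point the paper also glosses over, but the heart of your argument coincides with the paper's one-line construction.
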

\proof Since $\I$ is not tall there is an infinite set $X\subseteq M$ such that $\I\restriction X=[X]^{<\omega}$. Consider arbitrary $f$ sending (bijectively) $M\setminus X$ to a set belonging to $\J$ and $X$ to the rest of $N$. \qed

Moreover, note that for each pair of ideals $\I,\J\neq\fin$ on $M$ there is an isomorphism $f$ such that $f(\I)$ is a $\mathrm{P}(\J)$-ideal. Indeed, let $I\in\I$ be an infinite set and $J\in\J$ be a coinfinite set. It suffices to consider a mapping $f$ such that $f[I]=M\setminus J$ to see that $f(\I)$ and $\J$ are orthogonal and thus $f(\I)$ is a $\mathrm{P}(\J)$-ideal. Consequently, for every $\J\neq\fin$ there are partitions $\A,\B,\mathcal{C},\mathcal{D}$ of $\omega\times\omega$ such that $\selp{\A}$, $\EDp{\A}, \finop{\B},\ofinp{\mathcal{C}}$ and $\finfinp{\mathcal{\mathcal{D}}}$ are $\mathrm{P}(\J)$-ideals. 
\par
Furthermore, the ideal $\ofin$ is a $\mathrm{P}$-ideal and therefore $\ofin$\footnote{as well as every isomorphic copy of $\ofin$, since isomorphisms preserve $\mathrm{P}$-property.} is a $\pid{\J}$-ideal for each ideal $\J$ on $\omega\times\omega$. Thus, $\ofinp{\A}$ is a~$\pid{\J}$-ideal for each ideal $\J$ on $\omega\times\omega$. For this reason, the ideal $\ofinp{\A}$ is omitted from further considerations.
 
For the next part we will need the following notions. A family $\A\subseteq [M]^\omega$ is an almost disjoint family (AD family) if $A\cap B\in\fin$ for each $A,B \in \A$, $A\neq B$. An AD family is a MAD family if for each infinite $X\subseteq M$ there is an $A\in\A$ such that $X\cap A$ is infinite (i.\,e.,\ $\A$ is $\subseteq$-maximal among the AD families). It is easy to see that every ideal $\I$ with a countable base $\D=\{D_n:n\in\omega \}$ has a countable subbase $\Ss=\{S_n:n\in\omega \}$ with $S_i\cap S_j=\emptyset$ whenever $i\neq j$. Indeed, let
$S_0=D_0$ and $S_k=D_k\setminus \bigcup_{l<k} D_l$ for $k>0$. Hence, every ideal with countable base is generated by an AD family. Note that an~ideal $\gen{\C}$ generated by a~family $\C$ is included in ideal~$\J$ if and only if $\C\subseteq\J$. 
 
 \begin{prop}\label{generovaneAD}
Let $\J$ be an ideal on $M$ and $\C$ be an infinite AD family. The following statements are equivalent.
\begin{enumerate}
    \item[\rm (1)] $\gen{\C}$ is a $\mathrm{P}(\J)$-ideal.
    \item[\rm (2)] $\C\setminus\J$ is finite.
      \item[\rm (3)] $\rsub{\gen{C}}{\J}$.
\end{enumerate}
\end{prop}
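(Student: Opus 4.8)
The plan is to prove the cycle $(1)\Rightarrow(2)\Rightarrow(3)\Rightarrow(1)$, using that $\C$ is an infinite AD family so that each member contributes an essentially independent ``column''. Before starting, note that $\C\subseteq\gen{\C}$, so the family $\set{C}{C\in\C}$ is a natural sequence from the ideal to test the P-property against.

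\emph{$(3)\Rightarrow(1)$.} This is immediate from Proposition~\ref{ortaink}(b): $\rsub{\gen{\C}}{\J}$ implies $\gen{\C}$ is a $\mathrm{P}(\J)$-ideal.

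\emph{$(1)\Rightarrow(2)$.} Suppose toward a contradiction that $\C\setminus\J$ is infinite; enumerate a countable subfamily $\set{C_n}{n\in\omega}\subseteq\C\setminus\J$ of distinct members. Since each $C_n\in\gen{\C}$, the $\mathrm{P}(\J)$-property yields $E\in\gen{\C}$ with $C_n\setminus E\in\J$ for every $n$. By the description of $\gen{\C}$, there is a finite $\C'\in[\C]^{<\omega}$ with $E\subseteq^* \bigcup\C'$. Pick $n$ with $C_n\notin\C'$ (possible since $\set{C_n}{n}$ is infinite and $\C'$ finite). Because $\C$ is AD, $C_n\cap\bigcup\C'$ is finite, hence $C_n\cap E$ is finite, so $C_n\setminus E$ is a cofinite subset of $C_n$; as $\J$ contains all finite sets, $C_n\setminus E\in\J$ forces $C_n\in\J$, contradicting $C_n\in\C\setminus\J$. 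Thus $\C\setminus\J$ is finite.

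\emph{$(2)\Rightarrow(3)$.} Assume $\C\setminus\J=\set{C}{C\in\C, C\notin\J}$ is finite, say equal to $\{C_1,\dots,C_k\}$ (possibly empty). I claim $E=M\setminus(C_1\cup\dots\cup C_k)\in\gen{\C}^*$ witnesses $\rsub{\gen{\C}}{\J}$ via the equivalent form $(\exists E\in\gen{\C}^*)\ \gen{\C}\restriction E\subseteq\J$. First, $M\setminus E=C_1\cup\dots\cup C_k\in\gen{\C}$ since it is a finite union of members of $\C$, so $E\in\gen{\C}^*$. Now take any $A\in\gen{\C}$; then $A\subseteq^*\bigcup\D'$ for some finite $\D'\subseteq\C$. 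Split $\D'=\D'_0\cup\D'_1$ where $\D'_0=\D'\cap\{C_1,\dots,C_k\}$ and $\D'_1=\D'\setminus\{C_1,\dots,C_k\}\subseteq\J$. Then $A\cap E\subseteq^*(\bigcup\D'_0\cup\bigcup\D'_1)\cap E$; the part coming from $\D'_0$ is contained in $(C_1\cup\dots\cup C_k)\cap E=\emptyset$, and the part coming from $\D'_1$ lies in $\bigcup\D'_1\in\J$. Hence $A\cap E\in\J$ (absorbing the finite error, as $\fin\subseteq\J$), i.e.\ $\gen{\C}\restriction E\subseteq\J$, giving $\rsub{\gen{\C}}{\J}$.

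\textbf{Main obstacle.} The only delicate point is the bookkeeping in $(1)\Rightarrow(2)$: one must use almost disjointness to guarantee that a single $E\in\gen{\C}$, which lives (modulo finite) inside a \emph{finite} subunion of $\C$, cannot $\J$-cover infinitely many distinct AD members that are themselves $\J$-positive. Everything else is a routine unwinding of the generated-ideal description and the convention $\fin\subseteq\J$.
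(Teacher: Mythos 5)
Your proof is correct and takes essentially the same route as the paper: $(3)\Rightarrow(1)$ via Proposition~\ref{ortaink}(b), $(2)\Rightarrow(3)$ using the witness $E=M\setminus\bigcup(\C\setminus\J)$, and $(1)\Rightarrow(2)$ by contraposition, exploiting that any $E\in\gen{\C}$ lives almost inside a finite subunion of $\C$ and hence meets all but finitely many AD members in a finite set. The paper's version of $\neg(2)\Rightarrow\neg(1)$ is stated more tersely but is the same argument you spelled out.
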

\proof

$\neg$(2)$\to\neg$(1) Let $\abs{\C}=\kappa$ and $\C=\{C_\alpha:\alpha<\kappa \}$ be an enumeration of $\C$. Let $\abs{\{\alpha\in\kappa:C_\alpha\notin\J \}}\geq\omega$. Choose an increasing sequence $\alpha_k$ such that $C_{\alpha_k}\notin\J$ for all $k$. For any $I\in\gen{C}$ pick a $k$ such that $\abs{I\cap C_{\alpha_k}}<\omega$. 

(2)$\to$(3) Let  $\gen{C}^*\ni E=\omega\setminus \bigcup (\C\setminus\J)$. Then $E$ is a witness for (3).

(3)$\to$(1) A consequence of Proposition~\ref{ortaink}(b).
\qed
 
If we combine the~Proposition~\ref{differJ} and Proposition~\ref{generovaneAD} we get an equivalence of the statements in Corollary~\ref{prehlcor}. Note that $\finop{\A}$ is included in an ideal~$\J$ as a subset if and only if $A_n\in \J$ for all $n\in\omega$, compare with the condition (3) in Corollary~\ref{prehlcor}. On the other hand $\finfinp{\A}$ is not a subset of $\J$ for $\J=\fino,\ED,\ofin$ for any~$\A$. Indeed, the statement for $\fino$ and $\ofin$ follows from the tallnes of $\finfinp{\A}$. In fact, if $\I$ is an ideal on $M$ and $\J\neq\fin$ is not a tall ideal, then $\I$ is tall if and only if $f(\I)\not\subseteq \J$ holds for all bijective mappings $f$. ``Only if" part is trivial. The contrapositive of ``if" part follows from Lemma~\ref{existujeInkIzo}.  The ideal $\finfin$ is not Katětov below $\ED$, see \cite{hrusakborel}, hence $\finfinp{\A}\not\subseteq\ED$. Finally, the~statement $\rsub{\finfinp{\A}}{\J}$ cannot be added to the~list of equivalent conditions in Corollary~\ref{prehlcor} since taking $A\in(\finfin)^\ast$, the~ideal $(\finfin)\restriction A$ is tall on $A$, but for $(\fino)\restriction A$ that is not the~case.

\begin{corol}\label{prehlcor}
Let $\J$ be an ideal on $\omega\times\omega$. The following statements are equivalent.
\begin{itemize}
    \item[\rm (1)] $\finop{\A}$ is a $\mathrm{P}(\J)$-ideal.
    \item[\rm (2)] $\finfinp{\A}$ is a $\mathrm{P}(\J)$-ideal.
    \item[\rm (3)] $(\forall^\infty n\in\omega)\ A_n\in \J$.
      \item[\rm (4)] $\rsub{\finop{\A}}{\J}$.
\end{itemize}
\end{corol}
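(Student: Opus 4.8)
The plan is to prove the chain of equivalences (1) $\Leftrightarrow$ (2) $\Leftrightarrow$ (3) $\Leftrightarrow$ (4) by combining the two propositions just proved. First I would dispose of the part that does not depend on $\A$ being a partition. Since $\A$ is an AD family, $\finop{\A}=\gen{\A}$, so Proposition~\ref{generovaneAD} applied with $\C=\A$ gives immediately the equivalence of (1), (4), and the condition ``$\A\setminus\J$ is finite''. But $\A\setminus\J=\set{A_n}{A_n\notin\J}$, and saying this set is finite is exactly $(\forall^\infty n\in\omega)\ A_n\in\J$, which is (3). Thus (1) $\Leftrightarrow$ (3) $\Leftrightarrow$ (4) is essentially a restatement of Proposition~\ref{generovaneAD}; I would just remark that (3) is the unwound form of condition (2) in that proposition.

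The remaining work is (2) $\Leftrightarrow$ (3) (equivalently (2) $\Leftrightarrow$ (1)). Here I would invoke the decomposition $\finfinp{\A}=\finop{\A}\lor\ofinp{\A}$ together with Proposition~\ref{differJ}, applied with $\I_1=\finop{\A}$ and $\I_2=\ofinp{\A}$. To do so I must check the hypotheses of that proposition: that $\finop{\A}$ and $\ofinp{\A}$ are non-orthogonal (which is clear, since a set disjoint from $\bigcup\A$ does not exist once we insist the union is everything, or more carefully: no set can be a finite union of $A_n$'s and simultaneously meet every $A_n$ finitely unless it is finite — so their intersection is $\fin$ and their join is a proper ideal), and that $\finop{\A}\cap\ofinp{\A}\subseteq\J$. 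The intersection $\finop{\A}\cap\ofinp{\A}$ consists of sets that are both contained (mod finite) in finitely many $A_n$'s and meet every $A_n$ finitely, hence are finite; so $\finop{\A}\cap\ofinp{\A}=\fin\subseteq\J$ automatically. Then Proposition~\ref{differJ} says $\finfinp{\A}=\finop{\A}\lor\ofinp{\A}$ is $\mathrm{P}(\J)$ if and only if both $\finop{\A}$ and $\ofinp{\A}$ are $\mathrm{P}(\J)$. But $\ofinp{\A}$ is an isomorphic copy of $\ofin$, which is a $\mathrm{P}$-ideal, hence $\mathrm{P}(\J)$ for every $\J$ by Lemma~\ref{ort}(a) (and $\mathrm{P}$-property is preserved by isomorphism); so the second conjunct is free. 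Therefore $\finfinp{\A}$ is $\mathrm{P}(\J)$ iff $\finop{\A}$ is $\mathrm{P}(\J)$, i.e. (2) $\Leftrightarrow$ (1), closing the circle.

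The only genuinely delicate points are bookkeeping ones: confirming $\finop{\A}\lor\ofinp{\A}$ is indeed $\finfinp{\A}$ as defined (the ideal generated by $\finop{\A}\cup\ofinp{\A}$ — but the join of two non-orthogonal ideals is precisely the generated ideal, as noted before Proposition~\ref{differJ}), and being careful that all ideals involved contain $\fin$ and are proper so that $\lor$ behaves. I expect the main obstacle, if any, to be making the $\finop{\A}\cap\ofinp{\A}=\fin$ computation airtight: one writes an arbitrary member of the intersection as $X\subseteq^* A_{n_1}\cup\cdots\cup A_{n_j}$ with $|X\cap A_n|<\omega$ for all $n$, concludes $X\setminus(A_{n_1}\cup\cdots\cup A_{n_j})$ is finite and $X\cap(A_{n_1}\cup\cdots\cup A_{n_j})$ is finite (a finite union of finite sets), hence $X$ is finite. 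Everything else is an immediate application of the already-established Propositions~\ref{differJ} and~\ref{generovaneAD} and Lemma~\ref{ort}(a).
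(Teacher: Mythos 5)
Your proposal is correct and follows essentially the same route as the paper: both apply Proposition~\ref{generovaneAD} with $\C=\A$ to obtain the equivalence of (1), (3), and (4) (the paper splits off (3)$\to$(4) as straightforward and (4)$\to$(1) via Proposition~\ref{ortaink}(b), while you read all three directly off the generovaneAD statement), and both derive (1)$\Leftrightarrow$(2) from Proposition~\ref{differJ} with $\I_1=\finop{\A}$, $\I_2=\ofinp{\A}$, using that $\ofinp{\A}$ is a P-ideal. Your extra care in verifying the hypotheses of Proposition~\ref{differJ} — non-orthogonality and $\finop{\A}\cap\ofinp{\A}=\fin\subseteq\J$ — is sound and fills a small gap the paper leaves implicit.
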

\proof
(1)$\equiv$(2) Particular case of Proposition~\ref{differJ} for $\I_1=\finop{\A}$ and $\I_2=\ofinp{\A}$ using the fact that $\ofinp{\A}$ is a~P-ideal. 
\par
(1)$\equiv$(3) Particular case of Proposition~\ref{generovaneAD} for $\gen{\A}=\finop{\A}$.
\par
(3)$\to$(4) Straightforward.
\par
(4)$\to$(1) This implication follows from Proposition~\ref{ortaink}(b).
\qed

Let us focus on a particular case of Corollary~\ref{prehlcor} for $\J=\fino$. We offer two more equivalent conditions for $\finop{\A}$ to be a~$\mathrm{P}(\fino)$-ideal in Corollary~\ref{ekvivfub}. Note that $\finop{\A}$ is included in ideal~$\fino$ if and only if each~$A_n$ has nonempty intersection with only finitely many $V_m$'s. Compare with the
part~(2) of the~following.
\begin{corol}\label{ekvivfub}
The following statements are equivalent.
\begin{enumerate}[(1)]
    \item $\finop{\A}$ is a $\mathrm{P}(\fino)$-ideal.
    \item $(\forall^\infty n)(\forall^\infty m)\ A_n\cap V_m=\emptyset$. 
    \item $(\exists F\in\fin^*)\ \finop{\A}\restriction\bigcup_{k\in F}A_k\subseteq\fino$.
    \item $(\forall^\infty n)\ A_n\in\fino$.
\end{enumerate}
\end{corol}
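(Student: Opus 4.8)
The plan is to prove the chain of equivalences (1)$\to$(4)$\to$(3)$\to$(2)$\to$(1), piggy-backing on Corollary~\ref{prehlcor} wherever possible. Indeed, statements (1) and (4) here are exactly statements (1) and (4) of Corollary~\ref{prehlcor} specialized to $\J=\fino$, so (1)$\equiv$(4) is immediate. Likewise statement (4) here (i.e.\ $(\forall^\infty n)\ A_n\in\fino$) is precisely statement (3) of Corollary~\ref{prehlcor} for $\J=\fino$, so it is already known to be equivalent to (1). The only genuinely new content is the reformulation of ``$(\forall^\infty n)\ A_n\in\fino$'' as the two conditions labelled (2) and (3) above, so the core of the argument is a short unravelling of what $A_n\in\fino$ means and what $\rsub{}{}$ means in this concrete setting.

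First I would record the defining fact: for a set $X\subseteq\omega\times\omega$ one has $X\in\fino$ iff $\set{m}{(\{m\}\times\omega)\cap X\neq\emptyset}=\set{m}{X\cap V_m\neq\emptyset}$ is finite, i.e.\ $X$ meets only finitely many vertical columns $V_m$. Hence ``$(\forall^\infty n)\ A_n\in\fino$'' says exactly ``$(\forall^\infty n)(\forall^\infty m)\ A_n\cap V_m=\emptyset$'', which is the equivalence (4)$\equiv$(2) of the present corollary; this is a pure rewriting and needs no work beyond citing the definition of $\fino$.

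For (4)$\equiv$(3): by Corollary~\ref{prehlcor}(4) specialized to $\J=\fino$, condition (1) is equivalent to $\rsub{\finop{\A}}{\fino}$, which by the definition of $\rsub{}{}$ means there is $E\in(\finop{\A})^*$ with $\finop{\A}\restriction E\subseteq\fino$. The members of the dual filter $(\finop{\A})^*$ are exactly the sets whose complement is covered by finitely many $A_k$'s, equivalently sets of the form $(\omega\times\omega)\setminus\bigcup_{k\in a}A_k$ for $a\in[\omega]^{<\omega}$ (up to a $\fino$-error, but since $\fino\subseteq\finop{\A}$ we can absorb that), so restricting $\finop{\A}$ to such an $E$ is essentially the same as restricting to the complementary union $\bigcup_{k\in F}A_k$ with $F=\omega\setminus a$ cofinite; this is what condition (3) asserts. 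I would therefore argue that (3) is just $\rsub{\finop{\A}}{\fino}$ rephrased using the explicit description of the dual filter of a finitely generated ideal, invoking the fact (stated in the excerpt) that $\gen{\C}^* $ consists of complements of finite unions from $\C$ together with cofinite sets. To close the loop cleanly it is probably easiest to prove (4)$\to$(3) directly by taking $F=\set{k}{A_k\notin\fino}^c$ — a cofinite set by (4) — and checking $\finop{\A}\restriction\bigcup_{k\in F}A_k\subseteq\fino$, and then (3)$\to$(1) by Proposition~\ref{ortaink}(b) since (3) literally exhibits a $\fino$-union of $\finop{\A}$ inside $\finop{\A}$.

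The main obstacle, such as it is, is bookkeeping around the non-standard convention that $\fin$ need not be a subset of $\fino$ when one writes things carelessly — but here $\fino$ genuinely contains $\fin$, and $\fino\subseteq\finop{\A}$, so the finite errors introduced by passing between ``complement of $\bigcup_{k\in a}A_k$'' and ``$\bigcup_{k\in F}A_k$'' are harmless. There is no hard combinatorics: everything reduces to the definition of $\fino$, the description of $\gen{\C}^*$, and Corollary~\ref{prehlcor}. So I would present the proof as: (1)$\equiv$(4) and (1)$\equiv$``$(\forall^\infty n)\,A_n\in\fino$'' by Corollary~\ref{prehlcor}; ``$(\forall^\infty n)\,A_n\in\fino$''$\,\equiv\,$(2) by the definition of $\fino$; and (4)$\to$(3)$\to$(1) by the explicit $F$ described above together with Proposition~\ref{ortaink}(b).
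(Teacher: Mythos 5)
Your proof is correct; the paper's own proof is just the word ``Straightforward,'' and your argument---(1)$\equiv$(4) from Corollary~\ref{prehlcor}, (4)$\equiv$(2) by unwinding the definition of $\fino$, and (4)$\to$(3)$\to$(1) via the explicit witness $F=\{k:A_k\in\fino\}$ together with Proposition~\ref{ortaink}(b)---is exactly the expected unfolding. One small slip: your opening sentence identifies statement (4) of the present corollary with statement (4) of Corollary~\ref{prehlcor} (which is $\rsub{\finop{\A}}{\J}$), whereas it actually corresponds to statement (3) of Corollary~\ref{prehlcor}; you state this correctly in the very next sentence, so the argument itself is unaffected, but the first sentence should be fixed for consistency.
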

\proof Straightforward.\qed

Table~\ref{omxom1} lists particular cases of Corollary~\ref{prehlcor} for ideal $\J$ being one of the studied ideals $\fin\times\emptyset$,  $\sel$, $\ED$, $\emptyset\times\fin$, $\finfin$. Note that Corollary~\ref{prehlcor} states that $\finop{\A}$ is a $\mathrm{P}(\J)$-ideal if and only if $\finfinp{\A}$ is a $\mathrm{P}(\J)$-ideal. Therefore, the~conditions in Table~\ref{omxom1} are valid for $\finop{\A}$ as well as for $\finfinp{\A}$. For typographical reasons we omit ``for all but finitely many $n\in\omega$" from conditions in Table~\ref{omxom1}. Note that the~statement with ``for all $n$" is equivalent to $\finop{\A}\subseteq\J$ for ideal $\J$ being one of the ideals in the~table. 

 \begin{center}
 \begin{table}[H]
\centering
\begin{tabular}{|c|c|c|c|c|c|}
\hline
 & $\mathrm{P(\finop{\B})}$ & $\mathrm{P}(\selp{\B})$ & $\mathrm{P}(\EDp{\B})$ & $\mathrm{P}(\ofinp{\B})$ & $\mathrm{P}(\finfinp{\B})$ \\
 \hline\hline
 $\finop{\A}$&  {\footnotesize $(\forall^\infty m)$}&  {\footnotesize $(\exists k)(\forall  m)$ } &{\footnotesize $(\exists k)(\forall^\infty m)$ } & {\footnotesize $(\forall m)$} &   {\footnotesize $(\forall^\infty m)$} \\
  $\finfinp{\A}$  & {\footnotesize $A_n\cap B_m=\emptyset$} & {\footnotesize $\abs{A_n\cap B_m}<k$} & {\footnotesize $\abs{A_n\cap B_m}<k$} & {\footnotesize $\abs{A_n\cap B_m}<\omega$} & {\footnotesize $\abs{A_n\cap B_m}<\omega$} \\
\hline
\end{tabular}
   \caption{The statements hold for all but finitely many $n\in\omega$.}
   \label{omxom1}
 \end{table}
\end{center}

\section{Selector ideal and towers of functions}\label{S-towers}

In this section we introduce a somewhat instinctive set-theoretic object, which, as we shall see, is tightly interrelated with the studied P-like property. The goal is to find a description of relationships between ideals using natural notions such as partial functions.
It turned out that we are able to formulate necessary conditions for $\selp{\A}$ to be a~$\pid{\J}$-ideal where $\J$ is one of the~ideals among $\fino$,  $\sel$, $\ED$, $\ofin$, $\finfin$ using so-called towers of functions. In the~following sections we shall show that towers of functions may be or may be not  a~sufficient tool for characterizing the studied P-notion. 

If $\B$ is an infinite partition of $\omega\times\omega$ into infinite sets, we call a partial function $f\subseteq\omega\times\omega$ $\boldsymbol{\B}$\textbf{-monochromatic} if there is $B\in\B$ such that $f\subseteq B$.
\begin{defi}
Let $\kappa,\lambda\leq \omega$ be cardinal numbers and $\B$ be an~infinite partition of $\omega\times\omega$ into infinite sets. A~set $\T$ of nonempty partial funcions from $\omega$ to $\omega$ is called $\boldsymbol{(\kappa,\lambda)}$\textbf{-tower of functions} ($\boldsymbol{(\kappa,\lambda)}$\textbf{-TF} for short) if
\begin{itemize}
    \item $(\exists a\in[\omega]^\kappa)(\forall g\in \T)\ \dom{g}=a$,
    \item $\abs{\T}=\lambda$,
    \item $g\neq g'\in\T\ \to\ g\cap g'=\emptyset$.
\end{itemize}
If, in addition, each $g\in\T$ is $\B$-monochromatic, we call such a $\tower{\kappa}{\lambda}$ a $\boldsymbol{(\kappa,\lambda)}$\textbf{-tower of} $\boldsymbol{\B}$\textbf{-mo\-no\-chroma\-tic functions} ($\boldsymbol{\tow{\kappa}{\lambda}}$ \textbf{w.r.t. }$\boldsymbol{\B}$ for short).
\end{defi}

 Briefly speaking, $\kappa$ is the~size of the~domain of functions in~$\T$, and $\lambda$ is the~size of~$\T$ itself. Sometimes we do not stress parameters~$\kappa,\lambda,\B$, and we say just towers of monochromatic functions or even just towers. Note that $\emptyset$ is the only $\tow{\kappa}{0}$ for every cardinal number~$\kappa$. On the other hand, there is no $\tow{0}{\lambda}$ for any~$\lambda>0$.

If $\T$ is a $\tow{\kappa}{\lambda}$, we use the following notation:
\begin{itemize}
   \item If $\T\neq\emptyset$, then $C_\T=\{B\in\B : (\exists g\in\T)\ g\subseteq B\}$ and
  $\dom{\T}=\dom{g}$, where $g\in\T$.
    \item If $\T=\emptyset$, then $C_\T=\emptyset$ and $\dom{\T}=\emptyset$.
\end{itemize}


We use the term  $\B$-monochromatic (or just monochromatic) because the partition $\B$ can be seen as a coloring of $\omega\times\omega$. Note that the existence of $\tow{\kappa}{\lambda}$ $\T$ implies the existence of $\tow{k}{l}$ $\T'$ with $\bigcup \T'\subseteq \bigcup \T$ for all $k<\kappa,l<\lambda$. We shall be interested in sequences of towers. Thus, we say that a~sequence $\seqn{\T_k}{k}$ of towers is a~sequence of \textbf{essentially different} towers if $C_{\T_i}\cap C_{\T_j}=\emptyset$ and $\dom{\T_i}\cap\dom{\T_j}=\emptyset$ for each $i\neq j$.
\par
Before we show the real connection between towers of functions and the P-property for two ideals, we will need the following simple lemmas.
\begin{lema}\label{omegaveze}
 For any finite $\F\subseteq{^\omega\omega}$ and any $\tower{\omega}{\abs{\F}+1}$ $\T$ there is $g\in \T$ such that $\abs{g\setminus\bigcup\F}=\omega$. 
\end{lema}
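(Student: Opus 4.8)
The plan is to reason by a counting (pigeonhole) argument on domains. Fix a finite family $\F\subseteq{^\omega\omega}$, say $\abs{\F}=N$, and a $\tower{\omega}{N+1}$ $\T=\{g_0,g_1,\dots,g_N\}$, all with common infinite domain $a=\dom{\T}\in[\omega]^\omega$. The key observation is that the functions in $\T$ are pairwise disjoint as subsets of $\omega\times\omega$, so for each fixed $k\in a$ the values $g_0(k),g_1(k),\dots,g_N(k)$ are $N+1$ pairwise distinct natural numbers. On the other hand, each $f\in\F$ contributes at most one pair with first coordinate $k$, namely $(k,f(k))$. Hence at the point $k$, the total functions in $\F$ can ``hit'' at most $N$ of the $N+1$ values $g_0(k),\dots,g_N(k)$; equivalently, for each $k\in a$ there is at least one index $i\leq N$ with $g_i(k)\notin\{f(k):f\in\F\}$, i.e. $(k,g_i(k))\notin\bigcup\F$.

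Now I would invoke the infinite pigeonhole principle. For each $k\in a$ choose (say, the least) index $i(k)\leq N$ such that $(k,g_{i(k)}(k))\notin\bigcup\F$, by the previous paragraph this is always possible. This defines a function $i\colon a\to\{0,1,\dots,N\}$ with finite range, so some value $i^\ast$ is attained on an infinite subset $a'\subseteq a$. Then $g_{i^\ast}\restriction a'$ is an infinite subset of $g_{i^\ast}$ that is disjoint from $\bigcup\F$, whence $\abs{g_{i^\ast}\setminus\bigcup\F}=\omega$, and $g=g_{i^\ast}\in\T$ is the desired function.

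There is essentially no obstacle here; the only point that needs a moment's care is the very first step — translating ``pairwise disjoint partial functions'' into ``distinct values at each common domain point'' — and the observation that a single $f\in{^\omega\omega}$, viewed as a graph, meets the vertical line $\{k\}\times\omega$ in exactly one point, so $N$ of them can block at most $N$ of the $N+1$ available values. After that the statement follows purely from the infinite pigeonhole principle applied to the finite-valued choice function $i$. (The hypothesis that $\F\subseteq{^\omega\omega}$ consists of \emph{total} functions is used precisely in this counting step; for arbitrary partial functions the same argument works but one only needs that each element of $\F$ meets each vertical line at most once, which is automatic.)
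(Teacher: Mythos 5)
Your proof is correct and is essentially the paper's argument run forwards: where the paper argues by contradiction and then asserts without elaboration that $\abs{\F}+1$ disjoint partial functions on an infinite common domain cannot be covered by $\abs{\F}$ total functions, you supply exactly the pointwise counting and infinite-pigeonhole step that justifies that assertion. Both rest on the same combinatorial core.
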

\begin{proof}
By contradiction. Let $\F\subseteq {^\omega\omega}$ be finite and let $\T$ be an~$\tower{\omega}{\abs{\F}+1}$ such that  $\abs{g\setminus \bigcup \F}<\omega$ for every $g\in \T$. Then $g\subseteq^* \bigcup \F$ for each $g\in \T$. So there is $k\in\omega$ such that $\bigcup_{g\in \T} g\restriction\{i>k:i\in\dom{\T} \}\subseteq \bigcup \F$. This is, of course, a contradiction, since a union of $\abs{\F}+1$ disjoint partial functions on $\{i>k:i\in\dom{\T} \}$ cannot be covered by $\abs{\F}$ functions.
\end{proof}
Consequently, for any $A\in \sel$ covered by $n$ functions and for any $\tower{\omega}{n+1}$ $\T$ there is a $g\in\T$ such that $\abs{g\setminus A}=\omega$.

\begin{lema}\label{kkveze}
Let $\T_k=\{g_i^k:i<k \}$ be a $\tower{k}{k}$ for every $k\in\omega$, such that $\dom{\T_i}\cap\dom{\T_j}=\emptyset$ for $i\neq j$ and let $\F\subseteq{^\omega\omega}$ be finite. Then there is $i\leq \abs{\F}$ such that
$$(\forall m\in\omega)(\exists n\geq m)\ \abs{g_i^n\setminus \bigcup\F}\geq m.$$
\end{lema}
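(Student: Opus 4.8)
The plan is to argue by contradiction, mimicking the counting argument already used in Lemma~\ref{omegaveze}. Suppose the conclusion fails: for every $i\leq\abs{\F}$ there is $m_i\in\omega$ such that for all $n\geq m_i$ we have $\abs{g_i^n\setminus\bigcup\F}<m_i$. Let $M=\max\{m_i:i\leq\abs{\F}\}$ and set $N=\max\{M,\abs{\F}+1\}$. The idea is to look at a single tower $\T_n$ for one suitably large $n$ and derive a contradiction from the fact that $\T_n$ consists of $n$ pairwise disjoint partial functions with common domain of size $n$, most of which are nearly covered by the $\abs{\F}$ functions of $\F$.

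First I would fix $n$ large enough, say $n>N\cdot(\abs{\F}+1)$ (or any value making the arithmetic below work), and consider the tower $\T_n=\{g_0^n,\dots,g_{n-1}^n\}$. For each $i<n$ with $i\leq\abs{\F}$ we get $\abs{g_i^n\setminus\bigcup\F}<m_i\leq M$; but in fact I only need this for the first $\abs{\F}+1$ functions $g_0^n,\dots,g_{\abs{\F}}^n$. Each of these $\abs{\F}+1$ functions differs from a subset of $\bigcup\F$ by fewer than $M$ points, so there is a cofinite (in $\dom{\T_n}$) set $a'\subseteq\dom{\T_n}$, obtained by removing at most $(\abs{\F}+1)M$ points, on which all of $g_0^n\restriction a',\dots,g_{\abs{\F}}^n\restriction a'$ are contained in $\bigcup\F$. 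Choosing $n$ with $n>(\abs{\F}+1)M$ guarantees $a'\neq\emptyset$. Now pick any point $j\in a'$: the values $g_0^n(j),\dots,g_{\abs{\F}}^n(j)$ are $\abs{\F}+1$ pairwise distinct natural numbers (pairwise distinct because the $g_i^n$ are pairwise disjoint partial functions on the same domain), and each pair $(j,g_i^n(j))$ lies in $\bigcup\F=\bigcup_{f\in\F}\operatorname{graph}(f)$. Since a function $f\in\F$ contributes exactly one point over the coordinate $j$, namely $(j,f(j))$, the set $\bigcup\F$ has at most $\abs{\F}$ points over $j$, contradicting that it contains $\abs{\F}+1$ of them.

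The main obstacle — really just a bookkeeping point — is making sure the chosen $n$ is large enough that after discarding the finitely many ``bad'' coordinates (at most $(\abs{\F}+1)M$ of them) the common domain $\dom{\T_n}$, which has size $n$, still has a point left; this is exactly why we need a tower $\T_k$ with $k$ growing, so that arbitrarily large domains are available. Note the hypothesis $\dom{\T_i}\cap\dom{\T_j}=\emptyset$ for $i\neq j$ is not actually needed for this argument (a single tower suffices), but it does no harm. I would present the proof cleanly as: assume the negation, extract $M$, fix $n$ with $n>(\abs{\F}+1)\cdot M$ and $n\geq\abs{\F}+1$, restrict attention to $g_0^n,\dots,g_{\abs{\F}}^n$, find a common coordinate $j$ not among the at most $(\abs{\F}+1)M$ exceptional coordinates, and observe that $\abs{\F}+1$ distinct points of $\bigcup\F$ lie over $j$ — impossible. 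This finishes the proof.
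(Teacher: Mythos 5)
Your proof is correct and rests on the same key combinatorial observation as the paper's: at any single coordinate $j$ in the domain of a tower, the $\abs{\F}+1$ pairwise disjoint functions $g_0^n,\dots,g_{\abs{\F}}^n$ take $\abs{\F}+1$ distinct values, while $\bigcup\F$ has at most $\abs{\F}$ points over $j$, so at least one must escape. The only difference is presentational: you argue by contradiction, which lets you uniformly bound all the $m_i$ by $M$ and thereby fold the paper's two-stage pigeonhole (first over coordinates of $\T_{m(\abs{\F}+1)}$, then over $m$ to stabilize $i_m$) into a single counting argument.
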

\begin{proof}
    Let wlog $m\geq 1$ and consider the tower $\T_{m\cdot (\abs{\F}+1)}$. Take $G=\{g_i^{m\cdot (\abs{\F}+1)}:i\leq  \abs{\F} \}\subseteq{\T_{m\cdot (\abs{\F}+1)}}$, i.e., a set of the first $\abs{\F}+1$ partial functions of $\T_{m\cdot (\abs{\F}+1)}$. Now, we use the standard pigeonhole principle. We have $m\cdot(\abs{\F}+1)$ points in the domain of $G$ and for each $l\in\dom{G}$ the set $\{g_i^{m\cdot(\abs{\F}+1)}(l):i\leq \abs{\F}\}$ is of cardinality $\abs{\F}+1$, so there is $i(l)\leq \abs{\F}$ such that $\ev{l,g_{i(l)}^{m\cdot(\abs{\F}+1)}(l)}\notin\bigcup\F$. The set $\{\ev{l,g_{i(l)}^{m\cdot(\abs{\F}+1)}(l)}: l\in\dom{G} \}$ contains $m\cdot(\abs{\F}+1)$ points, but there are only $\abs{\F}+1$ possible values of $i(l)$. By pigeonhole principle, one of these numbers $i(l)$ must have been repeated at least $m$ times. Denote this number by $i_m$. 
    
    Thus, for any $m\geq 1$ there is $i_m\leq\abs{\F}$ such that $\abs{g_{i_m}^{m\cdot(\abs{\F}+1)}\setminus\bigcup\F}\geq m$. A sequence $\ev{i_m:m\in\omega}$ takes only finitely many values, thus, at least one of these values, say $i$, must be repeated infinitely many times. That is, $(\exists^\infty m\in\omega)\ \abs{g_{i}^{m\cdot(\abs{\F}+1)}\setminus \bigcup \F}\geq m$.
\end{proof}
Consequently, using the notation from Lemma~\ref{kkveze}, for any $A\in\sel$ such that $A\subseteq\bigcup\F$ we have that $(\forall m\in\omega)(\exists n\geq m)\ \abs{g_i^n\setminus A}\geq m$
holds as well.

There are ways how to express the ideals $\selp{\A}$ and $\EDp{\A}$ similar to those in Section~\ref{natural_finfin}. These definitions are probably better in expressing the intuition behind these ideals.
\begin{center}
        \begin{tabular}{lcl}
             $\selp{\A}$&$=$ &$\{X\subseteq\omega\times\omega: (\exists m\in\omega)(\forall A\in \A)\ \abs{X\cap A}\leq m \}$,\\[0.2cm]
             $\EDp{\A}$&$=$ & $\{X\subseteq\omega\times\omega: (\exists m\in\omega)\ \{A\in\A: \abs{X\cap A }> m \}\text{ is finite}\}$.
        \end{tabular}
\end{center}

\begin{prop}\label{sufficient}
\
\begin{enumerate}[\rm (A)]
    \item If $\sel$ is a $\pid{\finop{\B}}$-ideal then there is no sequence $\seqn{\T_k}{k}$ of essentially different towers with each~$\T_k$ being a~$\tow{1}{k}$ w.r.t.~$\B$.\footnote{In fact, we shall prove the~equivalence, see Theorem~\ref{ref1}.}
    \item If $\sel$ is a $\pid{\selp{\B}}$-ideal then there is no $\tow{k}{k}$ w.r.t. $\B$ for some $k\in\omega$.
    \item If $\sel$ is a $\pid{\EDp{\B}}$-ideal then there is no sequence $\langle \T_k:k\in\omega \rangle$ of essentially different towers with each~$\T_k$ being a~$\tow{k}{k}$ w.r.t. $\B$.
    \item If $\sel$ is a $\pid{\ofinp{\B}}$-ideal then there is no $\tow{\omega}{k}$ w.r.t. $\B$ for some $k\in\omega$.\footnote{In fact, we shall prove the~equivalence, see Theorem~\ref{veze}.}
    \item If $\sel$ is a $\pid{\finfinp{\B}}$-ideal then there is no sequence $\langle \T_k:k\in\omega \rangle$ of essentially different towers with each~$\T_k$ being an~$\tow{\omega}{k}$ w.r.t. $\B$.
\end{enumerate}
\end{prop}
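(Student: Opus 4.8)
The plan is to prove all five implications by contraposition in a uniform way: from a tower (or sequence of essentially different towers) witnessing monochromaticity with respect to $\B$, I would build a countable family $\{I_n:n\in\omega\}\subseteq\sel$ admitting no $\J$-union in $\sel$, where $\J$ is the relevant copy of $\finop{\B}$, $\selp{\B}$, $\EDp{\B}$, $\ofinp{\B}$, or $\finfinp{\B}$. Recall that a base of $\sel$ is the family of finite unions of graphs of functions in ${}^\omega\omega$, and that a candidate $\sel$-union $I$ is always covered by finitely many functions $\F\subseteq{}^\omega\omega$; this is exactly the situation Lemmas~\ref{omegaveze} and~\ref{kkveze} are designed to defeat. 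So the proof skeleton is: assume $\sel$ is $\pid{\J}$, take the candidate $I\in\sel$ promised by the P-property, fix $\F$ with $I\subseteq\bigcup\F$, and use the tower(s) together with the appropriate pigeonhole lemma to extract a monochromatic partial function $g$ (i.e. $g\subseteq B$ for some $B\in\B$) which is ``$\J$-large outside $I$'', contradicting that the member of $\{I_n\}$ containing $g$ satisfies $I_n\subseteq^\J I$.

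For the individual parts I would proceed as follows. For (D) ($\ofinp{\B}$): given a $\tow{\omega}{k}$ for every $k$, the families $\T_k$ can be taken with pairwise disjoint domains, and I enumerate all their functions as $\{I_n\}$; each such $g$ lies in $\sel$. If $I\in\sel$ is a $\finop{\B}$-union (a fortiori $\ofinp{\B}$-union — but we need $\ofinp{\B}$ here), fix $\F$ with $I\subseteq\bigcup\F$ and apply Lemma~\ref{omegaveze} to the tower $\T_{|\F|+1}$: there is $g\in\T_{|\F|+1}$ with $|g\setminus I|=\omega$ (using the ``Consequently'' remark after Lemma~\ref{omegaveze}). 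Since $g\subseteq B$ for some $B\in\B$, the set $g\setminus I$ is an infinite subset of $B$, hence $g\setminus I\notin\ofinp{\B}$, contradicting $g\subseteq^{\ofinp{\B}} I$. Part (B) ($\selp{\B}$) is the crispest: a single $\tow{k}{k}$ w.r.t.~$\B$ for some fixed $k$ is enough — I let $\{I_n\}$ be (finitely many) graphs $\{g_i:i<k\}$; any $\sel$-union $I$ misses at least one graph's worth of points on each color, and more precisely, since the $k$ functions are disjoint and each monochromatic, no finite union $\bigcup\F$ with $|\F|<k$ can absorb all of them modulo $\selp{\B}$ — one checks that for some $i$, $g_i\setminus I$ meets $k$ distinct elements of $\B$ in more than any preassigned bound, or simply that $g_i\setminus I$ is an infinite selector-type set with unbounded intersections, hence not in $\selp{\B}$. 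Parts (A), (C), (E) are the ``sequence of essentially different towers'' versions and follow the same template but feed the pigeonhole of Lemma~\ref{kkveze} (for (C), (E), using $\tow{k}{k}$'s) or an elementary counting argument (for (A), using $\tow{1}{k}$'s): essential difference guarantees the $C_{\T_k}$ are disjoint, so the ``$\J$-large'' remainder found in one tower is not compensated by the bounded intersections allowed in the other towers' colors, delivering membership outside $\finop{\B}$, $\EDp{\B}$, $\finfinp{\B}$ respectively.

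The main obstacle, and the place requiring genuine care rather than bookkeeping, is parts (C) and (E): there $\J$ ($\EDp{\B}$ or $\finfinp{\B}$) is a tall ideal that tolerates \emph{bounded} (or even \emph{finite}, for $\finfinp{\B}$) intersection with each $B\in\B$, so a single monochromatic $g$ with $|g\setminus I|=\omega$ is \emph{not} automatically outside $\J$ — I need $g\setminus I$ to meet \emph{infinitely many} distinct colors $B\in\B$ in a way that violates the uniform bound (for $\EDp{\B}$) or to meet one color infinitely (impossible if $g\subseteq B$, so I must spread across colors — this is why the \emph{sequence} of essentially different towers, with $\bigcup_k C_{\T_k}$ infinite, is indispensable, and why Lemma~\ref{kkveze}'s conclusion ``$(\forall m)(\exists n\geq m)\ |g_i^n\setminus\bigcup\F|\geq m$'' — one function per tower, arbitrarily large remainder — is exactly the right statement). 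So the delicate step is to package the per-tower outputs of Lemma~\ref{kkveze} into a single set in $\sel$ realized as a union of the chosen $g_i^{n}$'s across towers, verify it lies in $\sel$ (it is a countable union of finite partial functions on disjoint domains — here I must be careful that it is genuinely $\sel$, i.e. uniformly bounded intersection with each $A\in\V$, which it is since each $g_i^n$ is a partial function and the domains are disjoint), and then confront it with the putative $\J$-union $I$ to extract the contradiction with $\EDp{\B}$-smallness or $\finfinp{\B}$-smallness of the difference. Everything else — reducing to a base, passing to $\F$, the disjointness bookkeeping from essential difference — is routine.
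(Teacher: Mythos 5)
Your overall template---contrapose, build the family $\W=\{f_i:i\in\omega\}$ where $f_i=\bigcup_{k>i}g^k_i$ gathers one function from each tower, reduce the candidate bound to a finite $\F\subseteq{}^\omega\omega$, and then apply a pigeonhole lemma---is the right one, and it is essentially the route the paper takes for (A), (C), (D), (E). But your treatment of (B) has a genuine error, and you have also misassigned the lemmas in (C)/(E).

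On (B), you claim ``a single $\tow{k}{k}$ w.r.t.~$\B$ for some fixed $k$ is enough'' and you propose $\{I_n\}=\{g_i:i<k\}$. This cannot work: each $g_i$ in a $\tow{k}{k}$ is a partial function with domain of size $k$, hence a \emph{finite} set, so $g_i\setminus I$ is finite and therefore trivially in $\selp{\B}$ for any $I$; a finite family of finite sets can never witness a failure of the P-property. Your further claim that ``$g_i\setminus I$ meets $k$ distinct elements of $\B$'' is also incoherent, since $g_i$ is monochromatic, i.e.\ contained in a single $B\in\B$. The misreading is of the direction of the contrapositive: the negation of ``there is no $\tow{k}{k}$ for \emph{some} $k$'' is ``there is a $\tow{k}{k}$ for \emph{every} $k$''. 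Concretely, taking $\B=\V$: a $\tow{1}{1}$ w.r.t.~$\V$ exists trivially, yet $\sel$ is a $\pid{\sel}$-ideal, so your version of (B) would be false. The correct hypothesis gives you a sequence of $\tow{k}{k}$'s with disjoint domains; you then form the infinite partial functions $f_i=\bigcup_{k>i}g^k_i$ and apply Lemma~\ref{kkveze} to get an index $i$ with $(\forall m)(\exists n\geq m)\ |g^n_i\setminus\bigcup\F|\geq m$, so that $f_i\setminus X$ has unbounded trace on colors, hence is not in $\selp{\B}$.

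A smaller point: you assign Lemma~\ref{kkveze} to both (C) and (E), but (E) concerns $\tow{\omega}{k}$'s, not $\tow{k}{k}$'s; there the paper restricts each $\T_k$ (for $k>|\F|$) to its first $|\F|+1$ functions, applies Lemma~\ref{omegaveze} per tower to get $i_k$ with $|g^k_{i_k}\setminus\bigcup\F|=\omega$, and then pigeonholes on the finitely many possible values of $i_k$ to find an $i'$ with $f_{i'}\setminus X\notin\finfinp{\B}$. Your intuition about why essential difference of towers is indispensable for (C)/(E)---to spread the uncovered remainder across infinitely many colors---is exactly right; only the bookkeeping of which pigeonhole lemma to invoke where needs correcting.
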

\proof
(A) Let $\ev{\T_k:k\in\omega}$ be a sequence of essentially different towers with each~$\T_k=\{g^k_i:i<k \}$ being a~$\tow{1}{k}$ w.r.t.~$\B$. Define a partial function $f_i=\bigcup_{k>i}g^k_i$ for each $i\in\omega$. We will show that $\W=\{f_i:i\in\omega \}\subseteq\sel$ is a~witness for $\sel$ not being a~$\pid{\finop{\B}}$-ideal. First, note that $f_i\cap f_j=\emptyset$ for $i\neq j$ and $\dom{f_i}=^*\dom{f_j}$ for each $i,j<\omega$. Let wlog $X=\bigcup \F\in\sel$ where $\F$ is a finite subset of ${^\omega\omega}$. Pick a~set $\W'$ of $\abs{\F}+1$ distinct functions from $\W$. Since we have $\abs{\F}+1$ disjoint partial functions from $\sel$ on $\bigcap_{f\in\W'}\dom{f}$, i.e.,  $\tower{\omega}{\abs{\F}+1}$, by Lemma~\ref{omegaveze} there exists $f'\in\W'$ such that $\abs{f'\setminus X}=\omega$. However, $\W$ was chosen in such a~way that infinite parts of its elements do not belong to~$\finop{\B}$, so $f'\setminus X\notin\finop{\B}$. Thus, $\sel$ is not a $\pid{\finop{\B}}$-ideal.
\par
(B) Assume that there is a $\tow{k}{k}$ w.r.t. $\B$ for all $k\in\omega$. Note that if there is a $\tow{k}{k}$ for each $k\in\omega$, then there is a sequence $\langle \T_k\subseteq\mathcal{P}(\omega\times\omega):k\in\omega\rangle$ such that
\begin{itemize}
    \item[\rm (a)] $\T_k=\{g^k_i:i<k\}$ is a $\tow{k}{k}$ for each $k<\omega$,
    \item[\rm (b)] $\dom{\T_i}\cap\dom{\T_j}=\emptyset$ for each $i\neq j$.
\end{itemize}

Let us define $f_i=\bigcup_{k>i}g^k_i$ for every $i<\omega$ and $\W=\{f_i:i<\omega \}$. Let $X\in\sel$, wlog $X=\bigcup\F$ for some finite $\F\subseteq {^\omega\omega}$. Applying Lemma~\ref{kkveze}, we get $i\in\omega$ such that $(\forall m\in\omega)(\exists n\geq m)\ \abs{g_i^n\setminus\bigcup\F}\geq m$. Since $g_i^n$ is a monochromatic part of $f_i$ for all but finitely many $n$, by the definition of $\selp{\B}$ we get $f_i\setminus X\notin\selp{\B}$. Note that it does not matter if the color changes with $n$.
\par
(C)  Assume there is a sequence $\T_k=\{g^k_i:i<k \}$ of esentially different towers, each $\T_k$ being a $\tow{k}{k}$ w.r.t. $\B$. Let us define $f_i$'s and $\W$ as in the previous cases and let, again, wlog $X=\bigcup\F$ for some finite $F\subseteq {^\omega\omega}$. The rest of the argumentation follows the lines of the previous case with a single exception: $g_i^n$ and $g_i^m$ are of different colors for any $n\neq m$. This fact is crucial for $f_i\setminus X\notin \EDp{\B}$.
\par
(D)  Assume that there is an~$\tow{\omega}{k}$,  $\T_k=\{g_i^k: i<k \}$ w.r.t. $\B$, for any $k\in\omega$ and consider a countable family $\{g_i^k:i,k\in\omega \}$. Let $X=\bigcup\F\in\sel$ for some finite $\F\subseteq{^\omega\omega}$. Applying Lemma~\ref{omegaveze} we get $i\leq \abs{\F}$ such that $\abs{g_i^{\abs{\F}+1}\setminus\bigcup\F}=\omega$.
\par
(E) Assume there is a sequence $\T_k=\{g^k_i:i<k \}$ of essentially different towers, each $\T_k$ being an~$\tow{\omega}{k}$ w.r.t. $\B$. Define $f_i$'s as in the cases (A), (B), (C) and let $X=\bigcup\F\in\sel$ for some finite $\F\subseteq{^\omega\omega}$. Let us denote by $\overline{\T}_k=\{g_i^k:i<\abs{\F}+1\}\subseteq \T_k$ for any $k>\abs{\F}$. Clearly, $\overline{\T}_k$ is an~$\tow{\omega}{\abs{\F}+1}$ for any $k>\abs{\F}$. Hence, by Lemma~\ref{omegaveze} there is a sequence $\ev{i_k:k\in\omega}\in{^\omega\{0,\dotso,\abs{\F} \} }$ such that $\abs{g^k_{i_k}\setminus\bigcup \F}=\omega$ for every $k>\abs{\F}$. Clearly, one of $i_k$'s, say $i'$, must have been repeated infinitely many times, i.e., $(\exists^\infty k\in\omega)\ \abs{g_{i'}^k\setminus\bigcup\F}=\omega$.Thus, $f_{i'}\setminus\bigcup\F\notin\finfinp{\B}$.
\qed

\section{Eventually different ideal and towers are sometimes enough}\label{towers_equiv}

As we have already mentioned, towers of functions in some cases describe not only a necessary condition, but also a sufficient condition for the~ideal $\sel$ to be a~$\mathrm{P}(\J)$-ideal. We shall try to point out these cases and show that sometimes they describe that the~ideal $\ED$ is a~$\mathrm{P}(\J)$-ideal as well. We begin with an~auxiliary assertion. 

Let us recall the fact that $\EDp{\A}\not\subseteq\finop{\B}$ for any partitions $\A,\B$ because of the tallness of $\EDp{\A}$. However, by Lemma~\ref{existujeInkIzo} there are partitions~$\A,\B$ such that $\selp{\A}\subseteq\finop{\B}$.

\begin{theorem}\label{ref1}
The following statements are equivalent.
\begin{itemize}
    \item[\rm (1)] $\sel$ is a $\pid{\finop{\B}}$-ideal.
    \item[\rm (2)] $\ED$ is a $\pid{\finop{\B}}$-ideal.
    \item[\rm (3)] $\ED\perp\finop{\B}$.
    \item[\rm (4)] $\rsub{\ED}{\finop{\B}}$.
    \item[\rm (5)] $\rsub{\sel}{\finop{\B}}$.
    \item[\rm (6)] $\sel,\fino$ are $\pid{\finop{\B}}$-ideal.
    \item[\rm (7)] $\omega\times\omega$ can be covered by finitely many verticals, ${B_n}$'s, and functions in~$\baire{\omega}$.
    \item[\rm (8)] There is no sequence $\seqn{\T_k}{k}$ of essentially different towers with each~$\T_k$ being a~$\tow{1}{k}$ w.r.t.~$\B$.
\end{itemize}
\end{theorem}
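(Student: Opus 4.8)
The plan is to prove the theorem as a cycle of implications, taking advantage of the machinery already developed. Several equivalences are essentially free: $(3)\equiv(4)$ is Lemma~\ref{nowheretall} applied to the tall ideal $\ED$ (or $\sel$) and the nowhere tall ideal $\finop{\B}$; $(4)\to(2)$ and $(5)\to(1)$ are Proposition~\ref{ortaink}(b); $(2)\to(1)$ follows since $\sel\subseteq\ED$ together with Lemma~\ref{ort}(e)-style monotonicity (any witness family in $\sel$ lies in $\ED$, so a $\finop{\B}$-union in $\ED$ restricts appropriately — actually one argues directly that if $\ED$ is $\pid{\finop{\B}}$ then so is its sub-ideal $\sel$, using that a $\finop{\B}$-union for a family in $\sel$ works). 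The equivalence $(1)\equiv(6)$ uses Proposition~\ref{differJ} with $\I_1=\sel$, $\I_2=\fino$, $\I_1\lor\I_2=\ED$, and the observation $\sel\cap\fino\subseteq\finop{\B}$ — wait, that intersection need not be inside $\finop{\B}$, so instead I would get $(6)\equiv(2)$ from Corollary~\ref{finfin}/Proposition~\ref{differJ} directly applied to $\ED=\sel\lor\fino$, noting $\sel\cap\fino=\fin\subseteq\finop{\B}$, which does hold. The $(1)\to(8)$ direction is exactly Proposition~\ref{sufficient}(A).

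The substantive work is the reverse implication, which I would route as $(8)\to(7)\to(5)\to(1)$ (or $(7)\to(3)$). First, $(7)\to(3)$: if $\omega\times\omega=\bigcup_{n<N}B_{i_n}\cup\bigcup_{j<M}g_j$ with the $g_j$ graphs of functions in $\baire\omega$, then $E=\bigcup_{j<M}g_j\in\sel\subseteq\ED$ and $\omega\times\omega\setminus E\subseteq\bigcup_{n<N}B_{i_n}\in\finop{\B}$, witnessing $\ED\perp\finop{\B}$; the same $E$ shows $\rsub{\ED}{\finop{\B}}$ and $\rsub{\sel}{\finop{\B}}$. The heart of the matter is $(8)\to(7)$, or its contrapositive: if $\omega\times\omega$ cannot be covered by finitely many $B_n$'s together with countably — hence finitely, by a compactness/diagonalization step — many functions, then one can build, for each $k$, a $\tow{1}{k}$ w.r.t.~$\B$, with the towers essentially different. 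The idea: negating $(7)$ means that no matter which finitely many partition members $B_{i_0},\dots,B_{i_{N-1}}$ and finitely many functions we remove, something remains; iterating, in any $\B$-positive ``column-like'' region not yet exhausted one can find $k$ pairwise disjoint singleton partial functions $\{(a,y_0)\},\dots,\{(a,y_{k-1})\}$ each sitting inside a distinct $B\in\B$, arranging the base points $a$ and the color sets $C_{\T_k}$ to be disjoint across different $k$.

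The main obstacle I expect is precisely this construction of essentially different towers from the failure of $(7)$: one must carefully extract, for every $k$, a single point $a_k\in\omega$ whose vertical fiber meets $k$ distinct classes of $\B$ in a way disjoint from all previously used colors and base points, and show that failure of $(7)$ guarantees infinitely many such $a_k$ are available. Concretely, if only finitely many points $a$ had fibers meeting infinitely many $\B$-classes, and every other fiber met only finitely many classes, one should be able to cover $\omega\times\omega$ by finitely many verticals (for the bad $a$'s), finitely many classes (handling the fibers that meet few classes — this needs a pigeonhole/bound argument), plus the graphs of functions enumerating the rest; getting the quantifiers right here, so that ``finitely many functions suffice'' rather than merely countably many, is the delicate point, and is where a König's-lemma / finite-branching argument (each unchosen point contributes a bounded amount) must be invoked. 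I would also need the elementary remark that a countable cover by verticals, $B_n$'s, and functions can always be thinned to a finite such cover precisely when $(7)$-type finiteness holds — this is the bridge that makes $(7)$ as stated (finitely many verticals and $B_n$'s, countably many functions is automatically finitely many) consistent.
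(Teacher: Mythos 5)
Your treatment of the easy equivalences matches the paper's in substance: $(3)\equiv(4)$ via Lemma~\ref{nowheretall} (the paper reaches $(3)\to(4)$ via Proposition~\ref{ortaink} and closes the cycle indirectly, so this is a harmless variant), $(4)\to(2)$ and $(5)\to(1)$ from Proposition~\ref{ortaink}(b), $(2)\equiv(6)$ from Proposition~\ref{differJ} with $\ED=\sel\lor\fino$ and $\sel\cap\fino=\fin\subseteq\finop{\B}$, $(1)\to(8)$ from Proposition~\ref{sufficient}(A), and $(7)\equiv(3)$ by unwinding the definitions. One caution on $(2)\to(1)$: your parenthetical ``if $\I$ is $\pid{\J}$ then so is its sub-ideal'' is false in general (the paper itself flags this, and $\finfin$ being $\pid{\fino}$ while its sub-ideal $\sel$ is not supplies a counterexample in Table~\ref{omxom}); you do eventually route around this through $(2)\equiv(6)\to(1)$, which is the correct path, but the first attempt should simply be deleted.

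The genuine gap is in the one substantive implication, $(8)\to(7)$. Your sketch is muddier than it needs to be, and in two places wanders off course. First, you impose that each $a_k$ should meet ``$k$ distinct classes of $\B$'': this is not part of being a $\tow{1}{k}$ (within a single tower the singletons need not be differently colored; only the essential difference between \emph{towers}, i.e.\ disjointness of the $C_{\T_k}$, matters). Second, the worry about thinning a ``countable'' cover to a finite one is a red herring: condition (7) already demands finitely many functions, and the equivalence $(3)\equiv(7)$ is exact because a member of $\ED$ is by definition covered (mod finite) by finitely many verticals and functions. Finally, your proposed dichotomy on ``how many fibers meet infinitely many $\B$-classes'' is not the right invariant and is unlikely to close; you yourself acknowledge the quantifier juggling is unresolved. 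The paper's route for $\lnot(3)\to\lnot(8)$ is both shorter and sidesteps all of this: from $\ED\not\perp\finop{\B}$, for any $k_{n-1}$ the tail $X_n=\bigcup_{k>k_{n-1}}B_k$ cannot lie in $\ED$ (its complement lies in $\finop{\B}$), so by the $\limsup$ characterization of $\ED$ there is $m_n>m_{n-1}$ with $\abs{V_{m_n}\cap X_n}\ge n+1$; picking $\T_n\subseteq V_{m_n}\cap X_n$ of that size and advancing $k_n$ past the colors used gives, by construction, disjoint domains and disjoint color sets. I recommend replacing your sketch with this argument; the rest of your plan then assembles into a complete proof.
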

\proof (2)$\equiv$(6), and (2)$\to$(1) follow directly from Proposition~\ref{differJ}. Part (7) is just (3) restated in terms of generating families. Implications (3)$\to$(4) and (5)$\to$(1) follow directly from Proposition~\ref{ortaink}. Part (3)$\to$(2) follows from Lemma~\ref{ort}. The~implication (1)$\to$(8) is shown in Proposition~\ref{sufficient}.
\par
(4)$\to$(5) Let $F\in\ED^*$ be a witness for (4). If $F\in\sel^*$ we are done. If $(\omega\times\omega)\setminus F$ contains infinite sections of ${V_n}$'s it is enough to add this sections to $F$ and form $F'$. It is not difficult to see that $F'\in\sel^*$ and since $\sel\restriction V_n=[V_n]^{<\omega}$ for each $V_n$, if $E\in\sel\restriction F'$ then $E$ is just union of a set belonging to $\sel\restriction F\subseteq\finop{\B}$, and some finite set from $\sel\restriction \bigcup_{n\in G} V_n$ for some finite $G\subseteq\omega$.
\par
$\neg$(3)$\to\neg$(8) Let us assume that $\ED$ and $\finop{\B}$ are not orthogonal. We inductively construct a~sequence $\seqn{\T_k}{k}$ of essentially different towers such that each~$\T_k$ is a~$\tow{1}{k}$ w.r.t.~$\B$.  Pick $\T_1$ arbitrarily contained in $V_0\cap B_{k_1}$  for some $k_1\in\omega$. Set $m_1=0$. For each $n\geq 2$, let 
$$
X_n=\bigcup_{k>k_{n-1}}B_k.
$$
We have $X_n\notin\ED$ since $\ED$ and $\finop{\B}$ are not orthogonal. Consequently, $\limsup_{m\to\infty}\abs{V_m \cap X_n}=\infty$, so there is an~$m_n>m_{n-1}$ such that $\abs{V_{m_n}\cap X_n}\geq n+1$. Pick $\T_n\subseteq V_{m_n}\cap X_n$ of size~$n+1$ arbitrarily, and set $k_n>k_{n-1}$ to be a~number with $\T_n\subseteq\bigcup_{k\leq k_n}B_k$. It is straightforward that the~construction leads to the~required sequence $\seqn{\T_k}{k}$ of essentially different towers.
\qed

One can notice that the~condition $\sel\perp\finop{\B}$ is not listed in~Theorem~\ref{ref1}. In the following we shall provide the reason why it cannot be added to the list.

\begin{lema}\label{selroz}
If $\sel\perp\finop{\B}$ then $\fino\subseteq\finop{\B}$.
\end{lema}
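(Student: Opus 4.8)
The plan is to reduce the conclusion $\fino\subseteq\finop{\B}$ to a statement about the verticals $V_n$: since $\{V_n:n\in\omega\}$ (together with $\fin$) is a generating family for $\fino$, and since $\gen{\C}\subseteq\J$ exactly when $\C\subseteq\J$, it suffices to check that $V_n\in\finop{\B}$ for every $n$. So the whole argument comes down to showing that the orthogonality witness forces each vertical to be almost covered by finitely many blocks of $\B$.

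Concretely, I would start from a witness for $\sel\perp\finop{\B}$, i.e.\ a set $A\subseteq\omega\times\omega$ with $A\in\sel$ and $(\omega\times\omega)\setminus A\in\finop{\B}$. From $A\in\sel$ I extract an $m\in\omega$ with $\abs{A\cap V_n}\le m$ for all $n$, so in particular each $A\cap V_n$ is finite. From $(\omega\times\omega)\setminus A\in\finop{\B}=\gen{\B}$ I extract, using that $\B\cup\fin$ is a subbase, a finite $S\subseteq\omega$ with $(\omega\times\omega)\setminus A\subseteq^*\bigcup_{k\in S}B_k$. Then for an arbitrary $n$ one has $V_n\setminus A\subseteq(\omega\times\omega)\setminus A\subseteq^*\bigcup_{k\in S}B_k$, hence $V_n\setminus A\in\finop{\B}$; adding back the finite set $V_n\cap A$ keeps us inside the ideal, so $V_n=(V_n\setminus A)\cup(V_n\cap A)\in\finop{\B}$. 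As $n$ was arbitrary, $\fino\subseteq\finop{\B}$.

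I do not expect a genuine obstacle here; the only points to keep straight are that a member of $\sel$ has uniformly — hence individually — finite vertical sections, and that $\finop{\B}$ is closed under subsets and finite modifications, so ``almost covered by finitely many $B_k$'s'' plus ``finite leftover'' still lands in $\finop{\B}$.
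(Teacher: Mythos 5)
Your proof is correct and follows essentially the same route as the paper's: from the orthogonality witness $A\in\sel$ you observe that $V_n\setminus A\in\finop{\B}$ (as a subset of $(\omega\times\omega)\setminus A$) and that $V_n\cap A$ is finite, so $V_n\in\finop{\B}$ for every $n$, which suffices since the verticals generate $\fino$. The detour through a finite $S$ with $(\omega\times\omega)\setminus A\subseteq^*\bigcup_{k\in S}B_k$ is harmless but unnecessary — closure under subsets already gives $V_n\setminus A\in\finop{\B}$ directly.
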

\proof It suffices to notice that every $E\in\sel$ has only finite intersection with each $V_n$. If $E\in\sel$ is such that $(\omega\times\omega)\setminus E\in \finop{\B}$ then $V_n\setminus E\subseteq(\omega\times\omega)\setminus E\in \finop{\B}$. The rest of the proof is implied by the fact $V_n\setminus E=^* V_n$, so we get $V_n\in\finop{\B}$ for all $n\in\omega$. \qed

\begin{prop}
If $\sel\perp \finop{\B}$ then $\ED$ is a $\pid{\finop{\B}}$-ideal. The converse is not generally true.
\end{prop}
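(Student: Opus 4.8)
The statement has two halves. For the first half, I would simply chain together results already in the excerpt: if $\sel\perp\finop{\B}$, then by Lemma~\ref{selroz} we get $\fino\subseteq\finop{\B}$, hence $\ED\subseteq\finop{\B}$ as well (since $\ED\subseteq\finfin$ is false, but actually $\ED=(\fin\times\emptyset)\lor\sel$ and $\fino=\fin\times\emptyset$, so $\ED\subseteq\finop{\B}$ would need $\sel\subseteq\finop{\B}$ too). Let me reconsider: from $\sel\perp\finop{\B}$ we have in particular that $\sel$ and $\finop{\B}$ are orthogonal, so $\sel$ is a $\pid{\finop{\B}}$-ideal by Lemma~\ref{ort}(b); combining with $\fino\subseteq\finop{\B}$ (Lemma~\ref{selroz}), which gives that $\fino$ is a $\pid{\finop{\B}}$-ideal by Lemma~\ref{ort}(d), we may invoke the equivalence (6)$\Rightarrow$(2) of Theorem~\ref{ref1} to conclude $\ED$ is a $\pid{\finop{\B}}$-ideal. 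This is the clean route.

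\textbf{The converse.} For the second half I must exhibit a partition $\B$ such that $\ED$ is a $\pid{\finop{\B}}$-ideal but $\sel\not\perp\finop{\B}$. By Theorem~\ref{ref1}, $\ED$ being a $\pid{\finop{\B}}$-ideal is equivalent to $\ED\perp\finop{\B}$, i.e., to condition (7): $\omega\times\omega$ is covered by finitely many verticals $V_n$, finitely many sets $B_m$, and finitely many functions in $\baire\omega$. So I need $\B$ with $\ED\perp\finop{\B}$ but $\sel\not\perp\finop{\B}$; by the contrapositive of Lemma~\ref{selroz}, if $\fino\not\subseteq\finop{\B}$ then automatically $\sel\not\perp\finop{\B}$, so it suffices to find $\B$ with $\ED\perp\finop{\B}$ yet some vertical $V_n\notin\finop{\B}$. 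The natural candidate: let $B_0=V_0$ (so the first vertical is swallowed) but arrange the remaining partition so that, say, $V_1\notin\finop{\B}$ while still $\ED\perp\finop{\B}$. Concretely, take one block of $\B$ to be $V_0$, and partition $(\omega\setminus\{0\})\times\omega$ into pieces each of which is a selector-like set meeting every $V_n$ ($n\geq 1$) in finitely many points but meeting $V_1$ in, say, exactly one point each — no single $B_m$ then covers $V_1$, and no finite union does either if we spread things out, so $V_1\notin\finop{\B}$. To check $\ED\perp\finop{\B}$: take $E'=V_0=B_0\in\finop{\B}$; then $(\omega\times\omega)\setminus E'=(\omega\setminus\{0\})\times\omega$, and we need this to lie in $\ED$ — but it does not, as it contains all of $V_1, V_2,\dots$. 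So instead I should use condition (7) directly: I want a cover of $\omega\times\omega$ by finitely many $V_n$'s, finitely many $B_m$'s, and finitely many functions. Put $B_0=V_1$; now $V_0$ is covered as a vertical, and I partition the rest cleverly so that one block equals $V_1$ but no block covers $V_0$ — wait, that reverses the problem.

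\textbf{Resolving the construction.} The real constraint is: I need $\finop{\B}\perp\ED$ (equivalently (7)) but there exists some vertical not covered by any finite union of $B_m$'s. Let me build $\B$ so that $B_0=\{0\}\times\omega=V_0$ and the remaining cells partition $(\omega\setminus\{0\})\times\omega$ into infinitely many sets $B_1,B_2,\dots$ each of which is a graph of a function defined on $\omega\setminus\{0\}$ (an infinite selector of the verticals $V_n$, $n\geq 1$). Then $V_1\cap B_m$ has exactly one point for each $m\geq 1$, so $V_1$ meets infinitely many cells and $V_1\notin\finop{\B}$; hence $\fino\not\subseteq\finop{\B}$ and by Lemma~\ref{selroz}'s contrapositive $\sel\not\perp\finop{\B}$. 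For (7): $V_0=B_0$ is one of the allowed $B_m$'s, and $(\omega\setminus\{0\})\times\omega$ is the union of the functions $B_1,B_2,\dots$ — but that's infinitely many functions, not finitely many. So I must instead arrange that $(\omega\setminus\{0\})\times\omega$ is covered by finitely many functions from $\baire\omega$ together with some of the $B_m$'s; the simplest fix is to let only $B_0=V_0$ be "bad" and make all other cells $B_m$ themselves be (total) functions, finitely many of which... no, a partition into functions of $(\omega\setminus\{0\})\times\omega$ is infinite. The clean solution: take $B_0=V_0$, and let $B_1,B_2,B_3,\dots$ be any partition of $(\omega\setminus\{0\})\times\omega$ into infinitely many infinite sets with the property that $B_1\cup B_2$ already covers a "large" portion — in fact just require that $\ED\restriction((\omega\setminus\{0\})\times\omega)$-orthogonality fails is what we want to avoid. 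Given the subtlety, the main obstacle is pinning down the correct $\B$; I expect the working example is: $B_0=V_0$ and $\{B_m:m\geq 1\}$ an arbitrary partition of $(\omega\setminus\{0\})\times\omega$ into infinite sets each meeting every $V_n$ ($n\ge1$) infinitely, chosen so that $(\omega\setminus\{0\})\times\omega$ itself is a single $\ED$-small-complement set — e.g. take $\B$ to be, on $(\omega\setminus\{0\})\times\omega$, the image of the partition witnessing $\ED$ under a bijection $(\omega\setminus\{0\})\times\omega\to\omega$, guaranteeing $\ED\perp\finop{\B}$ via (7) with $B_0$ as the lone extra cell — and then separately verify $V_1\notin\finop{\B}$, which holds as long as the bijection scatters $V_1$ across infinitely many cells. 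The bulk of the write-up is checking these two facts for the explicit $\B$; everything else is citation of Theorem~\ref{ref1} and Lemma~\ref{selroz}.
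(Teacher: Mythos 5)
Your first half is correct and essentially the paper's argument: Lemma~\ref{selroz} gives $\fino\subseteq\finop{\B}$, orthogonality gives that $\sel$ is $\pid{\finop{\B}}$, and then you combine the two. Routing through (6)$\Rightarrow$(2) of Theorem~\ref{ref1} is fine, though the paper applies Proposition~\ref{differJ} directly (which is what drives that equivalence anyway), so these are the same proof.

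The second half has a genuine gap, and your own hedging signals it. Every candidate you propose sets $B_0=V_0$ and then partitions $(\omega\setminus\{0\})\times\omega$ into the remaining blocks $B_1,B_2,\dots$ — but with that shape you cannot get $\ED\perp\finop{\B}$. Orthogonality requires some $E\in\ED$ with $(\omega\times\omega)\setminus E\in\finop{\B}$; the complement $(\omega\times\omega)\setminus E$ would have to lie in a finite union of the $B_m$'s, yet in your construction every $B_m$ with $m\geq 1$ meets every $V_n$ ($n\geq 1$) infinitely, so any finite union of them has the form (finite union of blocks each meeting each vertical infinitely) and its complement still contains an infinite subset of each $V_n$ — hence that complement is not in $\ED$, and no finite union of $B_m$'s together with finitely many verticals and functions covers $\omega\times\omega$. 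Condition (7) fails. You also invoke ``the image of the partition witnessing $\ED$ under a bijection,'' but $\ED$ here is the fixed ideal on $\omega\times\omega$ determined by $\V$; transporting some other partition into $(\omega\setminus\{0\})\times\omega$ does not make the resulting blocks $\ED$-small. The working example reverses the roles: take $B_0=(\omega\setminus\{0\})\times\omega$ (one huge block covering all but the first column), and let $\{B_n: n>0\}$ be an arbitrary partition of $V_0=\{0\}\times\omega$ into infinitely many infinite sets. Then $V_0\in\ED$ and its complement is exactly $B_0\in\finop{\B}$, so $\ED\perp\finop{\B}$ (indeed condition (7) holds with a single vertical and a single block). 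On the other hand any $E\in\sel$ misses a cofinite subset of $V_0$, and that cofinite set cannot be covered by finitely many of the $B_n$'s (which partition $V_0$ into infinitely many infinite pieces), so $\sel\not\perp\finop{\B}$. In fact $\sel\subseteq\finop{\B}$ here, since every $f\in\baire\omega$ is contained in $B_0$ plus one more block.
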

\proof
It follows from Lemma~\ref{selroz} that $\fino$ is a $\pid{\finop{\B}}$-ideal. Ideal $\sel$ is also a $\pid{\finop{\B}}$-ideal because of the orthogonality. Now it suffices to apply Proposition~\ref{differJ}. For the second part let $\B$ be an arbitrary infinite partition into infinite sets such that: $B_0=\bigcup_{n>0}(\{n\}\times\omega)$, $\bigcup_{n>0}B_n=\{0\}\times \omega$. It is easy to see that $\ED\perp \finop{\B}$ but $\sel\subseteq \finop{\B}$.   \qed

Another consequence of Theorem~\ref{ref1} states that if $\ED$ is a $\pid{\finop{\B}}$-ideal then $\finop{\B}$ is a $\pid{\ED}$-ideal. Note that the~opposite implication is not true in general, since $\fino\subseteq\ED$. 
\par
Towers of functions can be used to characterize  ``$\sel$ is a~$\pid{\ofinp{\B}}$-ideal'' as well.

\begin{theorem}\label{veze}
The following statements are equivalent.
\begin{itemize}
    \item[\rm (1)] $\sel$ is a $\pid{\ofinp{\B}}$.
    \item[\rm (2)] $\rsub{\sel}{\ofinp{\B}}$.
    \item[\rm (3)] There is $k\in\omega$ such that there is no $\tow{\omega}{m}$ w.r.t. $\B$ for every $m>k$.
    \item[\rm (4)] $(\forall\mathcal{F}\in[{^\omega\omega}]^\omega)(\exists F\in[{^\omega\omega}]^{<\omega})(\forall f\in\mathcal{F})(\forall B\in\B)\ \abs{(f\cap B)\setminus \bigcup F}<\omega$.
\end{itemize}
\end{theorem}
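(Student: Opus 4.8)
The plan is to establish the cycle of implications $(1)\to(2)\to(4)\to(3)\to(1)$, using the auxiliary material already developed. The equivalences $(1)\leftrightarrow(3)$ and $(1)\to(3)$ are essentially the content of Proposition~\ref{sufficient}(D); what is new here is the reverse direction and the reformulation $(4)$. Since $\ofinp{\B}$ is \emph{not} nowhere tall, Lemma~\ref{nowheretall} does not apply directly, so we cannot simply invoke orthogonality; we must argue with towers.

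\textbf{First, $(1)\to(2)$ (equivalently $\neg(2)\to\neg(1)$).} Suppose $\rsub{\sel}{\ofinp{\B}}$ fails. Then for every $E\in\sel^*$ we have $\sel\restriction E\not\subseteq\ofinp{\B}$, i.e., there is a partial function $h\subseteq E$ (a single graph suffices, since $\sel$ is generated by graphs) with $h\notin\ofinp{\B}$, meaning $\abs{h\cap B}=\omega$ for some $B\in\B$. The plan is to turn this into a tower. Building on Proposition~\ref{sufficient}(D), it suffices to produce, for each $k$, an $\tow{\omega}{k}$ w.r.t.~$\B$; then the witnessing family $\W=\{f_i\}$ constructed there shows $\sel$ is not $\pid{\ofinp{\B}}$. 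Actually it is cleaner to argue that $\neg(3)$ directly yields $\neg(1)$ via Proposition~\ref{sufficient}(D), so the real work is to show $\neg(2)\to\neg(3)$, or to get $(2)\to(4)$ and $(4)\to(3)$ and $(3)\to(1)$ separately, closing the loop with the trivial $(1)\to(2)$ being replaced by $\neg(2)\to\neg(4)$. I would organize it as $(1)\to(2)$ trivial from Proposition~\ref{ortaink}(b) in the contrapositive is wrong direction — so instead: prove $\neg(3)\to\neg(1)$ (Proposition~\ref{sufficient}(D)), $(1)\to(2)$ is not automatic, so prove $(2)\to(1)$ (Proposition~\ref{ortaink}(b)), and then close the remaining gap by proving $\neg(2)\to\neg(3)$ and $(3)\leftrightarrow(4)$.

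\textbf{The core steps.} For $\neg(2)\to\neg(3)$: assuming $\rsub{\sel}{\ofinp{\B}}$ fails, I would fix a bijective identification and build, by recursion on $k$, an $\tow{\omega}{k}$ w.r.t.~$\B$ for each $k$ — in fact it is enough to build one $\tow{\omega}{\omega}$-like object, or rather to observe that failure of $\rsub{}{}$ means no single $E\in\sel^*$ absorbs everything, and feed this into an inductive construction: having found $k$ disjoint total functions $g_0,\dots,g_{k-1}$ each monochromatic and all with the same cofinite domain, remove their union (which lies in $\sel$) together with the graphs used so far, pass to the positive set $E$ = complement, use $\sel\restriction E\not\subseteq\ofinp{\B}$ to find a new monochromatic partial function with infinite trace on some $B\in\B$ disjoint from the previous colors, and extend. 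Since we need \emph{domain $\omega$}, I would be careful to always take the new function with full (or cofinite) domain by thinning appropriately — this uses that a partial function with $\abs{h\cap B}=\omega$ can be shrunk to a monochromatic piece and then the colors are kept distinct because $\B$ is a partition. For $(3)\leftrightarrow(4)$: this is a direct unravelling. Condition $(4)$ says every countable family $\mathcal F$ of total functions can be "$\ofinp{\B}$-captured" by finitely many of its members; negating it gives a countable $\mathcal F$ such that no finite subfamily absorbs all the monochromatic traces, and from such an $\mathcal F$ one extracts, for each $k$, an $\tow{\omega}{k}$ w.r.t.~$\B$, and conversely an $\tow{\omega}{k}$ for all $k$ assembles into such an $\mathcal F$ (as in the proof of Proposition~\ref{sufficient}). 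So $\neg(4)\leftrightarrow\neg(3)$.

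\textbf{Main obstacle.} The delicate point is the domain condition $\kappa=\omega$ in the tower. When we extract functions from a non-absorbing positive set or from a bad family $\mathcal F$, the natural object is a partial function whose monochromatic trace on some $B$ is infinite, but its \emph{domain} need not be cofinite, and different functions in the would-be tower need a \emph{common} domain. Managing this — simultaneously keeping the domains large and equal, keeping the colors $C_{\T}$ distinct across the recursion, and ensuring the removed sets stay in $\sel$ so that the positive-set argument keeps going — is the step I expect to require the most care. I would handle it by always passing to the infinite monochromatic piece $h\cap B$ first (its domain is an infinite subset $a\subseteq\omega$), then, when building the final $\tow{\omega}{k}$ out of $k$ such pieces collected at stage $k$, restrict all $k$ of them to the common domain $\bigcap$ of the $k$ chosen domains (still infinite) — and note that for the application to Proposition~\ref{sufficient}(D) we only ever need finitely many functions at a time with a common infinite domain, which is exactly an $\tow{\omega}{k}$, so a fixed common domain across \emph{all} $k$ is not required, only within each $\T_k$. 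This observation is what makes the construction go through.
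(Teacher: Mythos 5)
Your decomposition correctly identifies the easy pieces — $(2)\to(1)$ is Proposition~\ref{ortaink}(b), $\neg(3)\to\neg(1)$ is Proposition~\ref{sufficient}(D), and $(1)\leftrightarrow(4)$ (which the paper in fact proves directly, and which is easier than your proposed $(3)\leftrightarrow(4)$) is a routine translation between $\sel$-sets and finite families of graphs. The real burden is $(3)\to(2)$, i.e.\ your ``$\neg(2)\to\neg(3)$'', and here the recursion you sketch has a genuine gap, one you flag yourself and then brush aside. Having fixed monochromatic pieces $g_0,\dots,g_{j-1}$ and passed to $E=(\omega\times\omega)\setminus\bigcup_{i<j}g_i\in\sel^*$, the failure of $\rsub{\sel}{\ofinp{\B}}$ gives you a partial function $h\subseteq E$ and a color $B$ with $\abs{h\cap B}=\omega$, but nothing constrains $\dom{h\cap B}$ to meet $\bigcap_{i<j}\dom{g_i}$ in an infinite set; the two domains may well be almost disjoint. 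You cannot rescue this by restricting $E$ to the columns in $a=\bigcap_{i<j}\dom{g_i}$, because once $a$ is coinfinite (which happens already after one step) the set $\bigcup_{n\notin a}V_n$ is not in $\sel$, so the restricted set is not $\sel$-positive and $\neg(2)$ cannot be applied to it. The parenthetical ``(still infinite)'' in your last paragraph is exactly the unproved claim, and your closing observation (that one only needs a $\tow{\omega}{k}$ for each finite $k$, not a nested family) does not touch the domain problem within a single stage. The same difficulty recurs in the $\neg(4)\to\neg(3)$ half of your proposed $(3)\leftrightarrow(4)$.

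The paper avoids the stage-by-stage recursion entirely and argues globally from $(3)$. With witness $k$, it stratifies each color $B$ by its column-section size: with $X_B^i=\set{n}{\abs{V_n\cap B}=i}$, only $i\leq k$ can have $X_B^i$ infinite (otherwise $B$ alone contains an $\tow{\omega}{k+1}$), and the part of $B$ over each such $X_B^i$ decomposes into $i$ disjoint monochromatic partial functions. Collecting all these pieces into a countable family $\G$ of pairwise disjoint monochromatic partial functions, hypothesis $(3)$ forces the intersection of any $k+1$ of their domains to be finite. A pruning step (deleting from each $g_i$ its trace over the $k$-fold domain intersections with earlier members) yields an $H\in\sel$ with at most $k$ points per column that almost contains every $g\in\G$, and one then checks that $H$ is an $\ofinp{\B}$-union for all of $\sel$. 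That structural decomposition — not a greedy recursion — is what makes the domain bookkeeping close, and it is the missing ingredient in your proposal.
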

\proof
    $\neg$(3)$\to\neg$(1) This is, in fact, Proposition~\ref{sufficient}. Note that if there is a $k$ such that there is no $\tow{\omega}{k}$ w.r.t. $\B$, then there is no $\tow{\omega}{m}$ w.r.t. $\B$ for every $m>k$.

(3)$\to$(2) For each (color) $B\in \B$ let $$X_B^i=\{n:\abs{V_n\cap B}=i\},\ \ i\in\omega+1\setminus\{ 0\},$$
and $I_B=\{i\leq k: \abs{X_B^i}=\omega \}$. Note that we may omit the case $i>k$ since $X^i_B$ would be always finite by the assumption. If $i\in I_B$ then there is an $\tow{\omega}{i}$ $\{g^1_{B,i},\dotso, g^i_{B,i}\}$ such that $\{\langle n,m \rangle\in B: n\in X_B^i  \}=\bigcup_{j=1}^ig^j_{B,i}$. Let us define a countable family $\G=\{g^j_{B,i}:B\in\B, i\in I_B, 1\leq j\leq i \}$ and consider an arbitrary bijective enumeration of $\G=\{g_j:j<\omega \}$. The family $\G$ consists of mutually disjoint monochromatic functions. We will show that there is a set of at most $k$ functions $f_j\in{^\omega\omega}$ such that $g_i\setminus\bigcup_{j<k}f_j$ is finite for each $i$.

Define \begin{alignat*}{2}
& h_i=g_i, \quad  && \text{ for }i<k,\\
& h_i=g_i \setminus \left(g_i\restriction\bigcup_{\{m_1,\dotso, m_k \}\in[i]^k}\left(\bigcap_{j=1}^k \dom{g_{m_j}}\cap\dom{g_i} \right)\right),\quad && \text{ for } i\geq k. 
\end{alignat*}
Notice that the set $\bigcap_{j=1}^k \dom{g_{m_j}}\cap\dom{g_i}$ is always finite: if $\bigcap_{j=1}^k \dom{g_{m_j}}$ is finite we are done. If $\bigcap_{j=1}^k \dom{g_{m_j}}$ is infinite, then $\bigcap_{j=1}^k \dom{g_{m_j}}\cap\dom{g_i}$ is an intersection of the domain of $g_i$ with the domain of an $\tow{\omega}{k}$. Since there does not exist an~$\tow{\omega}{k+1}$, it must be finite. As a consequence, a finite union $$\bigcup_{\{m_1,\dotso, m_k \}\in[i]^k}\left(\bigcap_{j=1}^k \dom{g_{m_j}}\cap\dom{g_i} \right)$$ is also a finite set, thus $h_i=^*g_i$. We show by contradiction that the set $H=\bigcup_{i<\omega} h_i$ is a union of at most $k$ partial functions $f_j$, i.e., $\abs{V_n\cap H}\leq k$ for all $n\in\omega$. If $\abs{V_n\cap H}>k$ for some $n$, then there are distinct $i_1,\dotso, i_{k+1}$ such that $n\in \dom{h_{i_1}},\dotso,\dom{h_{i_{k+1}}}$ and consequently $n\in \dom{g_{i_1}},\dotso,\dom{g_{i_{k+1}}}$. Let wlog $i_{k+1}=\max\{i_1,\dotso,i_{k+1} \}$. Then $\{i_1,\dotso,i_k \}\in[i_{k+1}]^k$ and $n\in\bigcap_{j=1}^k\dom{g_{i_j}}\cap \dom{g_{i_{k+1}}}$, thus $n\notin\dom{h_{i_{k+1}}}$ by the definition of $h_{i_{k+1}}$, a contradiction. Moreover, $g_i\setminus H$ is finite since $g_i=^*h_i$ for each $i\in\omega$.

Now, it suffices to show that $H\in\sel$ is a witness for $\rsub{\sel}{\ofinp{\B}}$. We will show that $f\setminus H\in\ofinp{\B}$ for every function $f\in{^\omega\omega}$, which is a subbase of $\sel$, consequently the relation $\rsub{\sel}{\ofinp{\B}}$ holds true. By the definition, $f\setminus H\in\ofinp{\B}$ if and only if $(f\cap B)\setminus H$ is finite for every $B\in\B$. First, note that not only $X_B^i$ is finite for every $i>k$ but also $\bigcup_{i\notin I_B}X_B^i=\bigcup_{i\in \omega+1\setminus (I_B\cup\{0 \})}X_B^i\in \fin$, otherwise there would exist an~$\tow{\omega}{k+1}$ (there would be infinitely many columns with at least $k+1$ points in~$B$). Denote by
$$C_B=\bigcup_{i\in I_B}\{\langle n,m \rangle\in B:n\in X_B^i \},\ \ D_B=B\setminus C_B.$$
Then $B=C_B\cup D_B$. 

We know that $\{\langle n,m \rangle\in B: n\in X_B^i  \}=\bigcup_{j=1}^ig^j_{B,i}$ and $I_B$ is finite, thus $C_B$ is finite union of monochromatic towers, i.e., there are $m<\omega$ and $i_0,\dotso, i_{m-1}<\omega$ such that $C_B= \bigcup_{j<m} g_{i_j}$, so $C_B\setminus H\subseteq \bigcup_{j<m} g_{i_j}\setminus H$ is a finite set. Clearly, $\mathrm{proj}_1 D_B=\bigcup_{i\in \omega+1\setminus (I_B\cup\{0 \})}X_B^i$, which is a finite set, then $f\cap D_B$ is finite because $\dom{f}\cap \mathrm{proj}_1 D_B\in\fin$. We have
$$(f\cap B)\setminus H\subseteq (C_B\setminus H) \cup (f\cap D_B)\in \fin.$$
is finite.

(2)$\to$(1) Proposition~\ref{ortaink}(b).

(1)$\equiv$(4) Assume (1) and let $\F\in[{^\omega\omega}]^{\omega}$. Then $\F\subseteq\sel$ and by the assumption there is $I_\F\in \sel$ such that $f\setminus I_\F\in\ofinp{\B}$ for every $f\in \F$. By the definition of $\sel$ there is a finite set of functions $F\subseteq {^\omega\omega}$ with $I_\F\subseteq\bigcup F$, so we have $f\setminus \bigcup F\in\ofinp{\B}$ for any $f\in \F$ as well. By the definition of $\ofinp{\B}$ the $f\setminus \bigcup F\in\ofinp{\B}$ is equivalent to $(\forall B\in \B)\ \abs{(f\cap B)\setminus\bigcup F}<\omega$.  For the other implication let $\{I_n:n\in\omega \}\subseteq \sel$. By the definition of $\sel$, for each $n$ there is $m_n$ and $f^n_1,\dotso,f^n_{m_n}$ such that wlog $I_n=\bigcup_{j<m_n}f^n_j$. Set $\mathcal{F}=\{f^n_j:n,j<\omega \}$ and apply (4). Then $I_n\setminus\bigcup F=\bigcup_{j<m_n}\left(f^n_j\setminus\bigcup F\right)$. For $j<m_n$ we know that $(f^n_j\cap B)\setminus\bigcup F$ is finite for each $B\in \B$, i.e., $f^n_j\setminus\bigcup F\in\ofinp{\B}$. Thus, $I_n\setminus\bigcup F$ is finite union of sets from $\ofinp{\B}$, thus, a set from $\ofinp{\B}$.
\qed

Although the ideal $\ED$ was added to the~list of equivalent conditions in Theorem~\ref{ref1}, Theorem~\ref{veze} is not the case as one can see in Table~\ref{omxom}. However, $\ED$ is a $\pid{\ofinp{\B}}$-ideal if and only if these ideals are orthogonal, as one can see below. 

\begin{theorem}\label{ed_ofin}
The following statements are equivalent.
\begin{itemize}
    \item [\rm (1)] $\ED$ is a $\pid{\ofinp{\B}}$-ideal.
    \item[\rm (2)] $\rsub{\ED}{\ofinp{\B}}$.
     \item[\rm (3)] $\ED\perp\ofinp{\B}$.
    \item[\rm (4)] $\fino$ and $\sel$ are $\pid{\ofinp{\B}}$-ideals.
    \item[\rm (5)] $\rsub{\fino}{\ofinp{\B}}$ and $\rsub{\sel}{\ofinp{\B}}$.
\end{itemize}
\end{theorem}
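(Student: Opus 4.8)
The plan is to establish the cycle of implications (1)$\to$(2)$\to$(3)$\to$(4)$\to$(5)$\to$(1), leaning heavily on the general machinery of Section~2. Several arrows are immediate: (2)$\to$(1) is exactly Proposition~\ref{ortaink}(b); (3)$\to$(2) is Proposition~\ref{ortaink}(a); and (4)$\equiv$(5) will follow once we know that $\fino$, $\sel$ are tall-or-nowhere-tall relative to $\ofinp{\B}$ — more precisely, $\sel$ is tall and $\ofinp{\B}$ is nowhere tall, so $\rsub{\sel}{\ofinp{\B}}\equiv\sel\perp\ofinp{\B}$ by Lemma~\ref{nowheretall}; and $\fino$ is non-tall, so the equivalence $\rsub{\fino}{\ofinp{\B}}\equiv(\fino$ is $\pid{\ofinp{\B}})$ should come from Corollary~\ref{prehlcor} applied with $\A=\V$ and $\J=\ofinp{\B}$ (condition (4) there). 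Actually it is cleanest to route (4)$\to$(5) and (5)$\to$(4) through these two observations individually rather than as a package.

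The substantive links are (1)$\to$(4) and (4)$\to$(3) (equivalently, getting from ``$\fino$ and $\sel$ each have the P-property'' to ``$\ED$ is orthogonal to $\ofinp{\B}$''). For (1)$\to$(4): since $\fino\subseteq\ED$ and $\sel\subseteq\ED$, but we cannot simply quote Lemma~\ref{ort}(e) in reverse. Instead I would use $\ED=(\fino)\lor\sel$ together with Proposition~\ref{differJ}: with $\I_1=\fino$, $\I_2=\sel$ we have $\I_1\cap\I_2=\sel\cap\fino$, and one checks $\sel\cap\fino\subseteq\ofinp{\B}$ — indeed any $X\in\sel\cap\fino$ is a subset of finitely many verticals, each meeting any $B\in\B$ finitely (in fact $X$ is finite if it is in $\fino\cap\sel$... no: $X\in\fino$ means all but finitely many columns are empty; $X\in\sel$ means bounded columns; so $X$ is finite, hence trivially in $\ofinp{\B}$). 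So the hypothesis $\I_1\cap\I_2\subseteq\ofinp{\B}$ of Proposition~\ref{differJ} holds, and $(1)\Leftrightarrow(4)$ is exactly the equivalence (1)$\equiv$(2) of that proposition with $M=\omega\times\omega$, $\J=\ofinp{\B}$.

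It remains to prove (4)$\to$(3), or more conveniently (5)$\to$(3). Suppose $\rsub{\fino}{\ofinp{\B}}$ and $\rsub{\sel}{\ofinp{\B}}$. Unpacking the first via its $\I^*$-formulation, there is $E_1\in(\fino)^*$ with $(\fino)\restriction E_1\subseteq\ofinp{\B}$; since $(\fino)\restriction E_1$ still contains (the trace of) all columns with finitely many exceptions, this forces all but finitely many full verticals $V_n$ to lie in $\ofinp{\B}$, i.e., each such $V_n$ meets every $B\in\B$ finitely. Combined with $\rsub{\sel}{\ofinp{\B}}\equiv\sel\perp\ofinp{\B}$ (Lemma~\ref{nowheretall}), pick $S\in\sel$ with $(\omega\times\omega)\setminus S\in\ofinp{\B}$; then the set $\omega\times\omega = S \cup ((\omega\times\omega)\setminus S)$ is witnessed to be ``$\sel$ plus $\ofinp{\B}$'', and one must upgrade this to a witness that $S'\in\ED$ with complement in $\ofinp{\B}$ by absorbing the finitely many bad columns. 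Concretely, set $S' = S \cup \bigcup\{V_n : V_n \text{ is one of the finitely many columns not in }\ofinp{\B}\}$; this is a selector-plus-finitely-many-columns, hence in $\ED$, and $(\omega\times\omega)\setminus S' \subseteq (\omega\times\omega)\setminus S$ minus those columns, which is still in $\ofinp{\B}$ (subset of an $\ofinp{\B}$ set). Thus $\ED\perp\ofinp{\B}$.

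The main obstacle I anticipate is the bookkeeping in (5)$\to$(3): one has to be careful that the two $\I^*$-witnesses $E_1$ (for $\fino$) and $E_2$ (for $\sel$) can be merged, and that ``finitely many exceptional columns'' from the $\fino$ side are exactly what's needed to repair the $\sel$-witness into an $\ED$-witness. The orthogonality-versus-$\rsub{}{}$ identifications (Lemma~\ref{nowheretall} for the tall ideal $\sel$, Corollary~\ref{prehlcor}/direct argument for the non-tall $\fino$) are the conceptual pivots; once those are in hand the rest is routine manipulation of traces on $\I$-positive sets. Everything else is a direct appeal to Propositions~\ref{ortaink} and~\ref{differJ} and the decomposition $\ED=(\fino)\lor\sel$.
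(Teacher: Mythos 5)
Your plan misfires at the one place it cannot afford to: you assert that $\sel$ is tall and then invoke Lemma~\ref{nowheretall} twice on that basis, once to get $(4)\equiv(5)$ for the $\sel$-coordinate and once (crucially) to ``pick $S\in\sel$ with $(\omega\times\omega)\setminus S\in\ofinp{\B}$'' in the step $(5)\to(3)$. But $\sel$ is \emph{not} tall: any vertical $V_n$ witnesses this, since every $X\in\sel$ has bounded columns, so $\sel\restriction V_n=[V_n]^{<\omega}$. The paper in fact explicitly classifies $\finop{\A}$, $\selp{\A}$, $\ofinp{\A}$ as the non-tall ideals and $\EDp{\A}$, $\finfinp{\A}$ as the tall ones. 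Consequently $\rsub{\sel}{\ofinp{\B}}$ does \emph{not} imply $\sel\perp\ofinp{\B}$ — take $\B=\V$: then $\sel\subseteq\ofin$ gives $\rsub{\sel}{\ofin}$ trivially, yet $\sel\not\perp\ofin$ because the complement of any $\sel$-set still has infinitely many infinite columns. So the set $S$ you want to ``repair into an $\ED$-witness'' need not exist, and the argument for $(5)\to(3)$ collapses.

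The correct route — and what the paper does — reserves the tall/nowhere-tall argument for $\ED$, which genuinely is tall: first merge the two $\rsub{}{}$ witnesses at the level of traces to get $(5)\to(2)$ (if $E_1\in(\fino)^*$, $E_2\in\sel^*$ witness $\rsub{\fino}{\ofinp{\B}}$ and $\rsub{\sel}{\ofinp{\B}}$ respectively, then $E_1\cap E_2\in\ED^*$ witnesses $\rsub{\ED}{\ofinp{\B}}$, since $\ED=\fino\lor\sel$; this is the general fact that $\rsub{\I_1}{\J}$ and $\rsub{\I_2}{\J}$ with $\I_1\not\perp\I_2$ give $\rsub{\I_1\lor\I_2}{\J}$), and only then apply Lemma~\ref{nowheretall} to the pair ($\ED$ tall, $\ofinp{\B}$ nowhere tall) to get $(2)\equiv(3)$. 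For the $\sel$-half of $(4)\equiv(5)$ the right reference is Theorem~\ref{veze} (items (1) and (2) there), not Lemma~\ref{nowheretall}. Your remaining arrows — $(2)\to(1)$ and $(3)\to(2)$ via Proposition~\ref{ortaink}, $(1)\to(4)$ via Proposition~\ref{differJ} after checking $\fino\cap\sel=\fin\subseteq\ofinp{\B}$, and the $\fino$-half of $(4)\equiv(5)$ via Corollary~\ref{prehlcor} — are all fine.
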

\proof 
(1)$\to$(4) follows from Proposition~\ref{differJ}. (4)$\to$(5) follows from Theorem~\ref{veze} and Corollary~\ref{prehlcor}. 

For (5)$\to$(2) it is sufficient to note the following -- it can be easily proved that if $\rsub{\I_1}{\J}$ and $\rsub{\I_2}{\J}$ where $\I_1\not\perp\I_2$, then $\rsub{\I_1\lor\I_2}{\J}$. 

(2)$\equiv$(3) is a consequence of Lemma~\ref{nowheretall}.

(3)$\to$(1) follows from Lemma~\ref{ort}.
\qed

At this point it may seem that $\rsub{\I}{\J}$ and $\I$ is a $\pid{\J}$-ideal are always equivalent but this, in fact, is not the case. Consider, e.g., $\ofin$ and $\fino$. Clearly, $\ofin\not\subseteq^\upharpoonright \fino$ but $\ofin$ is a $\pid{\fino}$-ideal.

\section{Towers of functions are not enough}

In contrary to Section~\ref{towers_equiv}, towers of functions are not always sufficient to fully describe the~$\pid{\J}$-property for $\sel$. The~whole section is devoted to the~counterexample, its~construction, and a~proof that it possesses the~required properties. 
\par
We shall define a~partition $\E=\set{A_{j,i}}{i,j\in\omega}\cup \{B_i:i\in \omega \}$ of~$\omega\times\omega$ into infinite sets.
Let us fix an~auxiliary partition~$\D=\set{D_k}{k\in\omega}$ of~$\omega\setminus \{0 \}$ into infinite sets. Next, we set
\begin{itemize}
    \item $\langle m,i\rangle \in A_{j,i}$ if and only if $m$ is the~$(j+ki)$-th element of~$D_k$, 
    \item $\bigcup \{B_i:i\in\omega\} \supseteq \{0 \}\times\omega$,
    \item the~elements of the~remaining subset of~$\omega\times\omega$, say $\{w_i:i\in\omega \}$, are distributed one-by-one into sets~$B_i$, i.e., $w_i\in B_i$ for all $i\in\omega$.
\end{itemize}
The last condition implies that if $m$ is $(j+ki)$-th element of $D_k$ for $-ki\leq j<0$ then $\ev{m,i}\in \bigcup \{B_i:i\in\omega \}$.
If a point $\ev{x,y}$ is colored with $A_{j,i}$, then $y=i$ because in the definition of $\E$ the color $A_{j,i}$ is assigned only to points whose second coordinate equals to $i$. The parameter $i$ in the notation $A_{j,i}$ thus represents the number of the row in which the color $A_{j,i}$ occurs. Consequently, $A_{j,i}\cap A_{j',i'}=\emptyset$ whenever $i\neq i'$. Also note that $\abs{A_{j,i}\cap (D_k\times\{ i\})}=1$ for all $i,j,k\in\omega$, that is, there is only one point of $\omega\times\omega$ ``above'' $D_k$ colored with $A_{j,i}$.
\par
The~construction of coloring~$\E$ is done in a~way to avoid towers of functions. 
\par\vspace{0.3cm}
\noindent \textbf{Observation.} There is no $\tow{k}{k}$ w.r.t. $\E$ for $k\geq 3$.
\proof
 By the definition of $\E$ for any $i\in\omega$ there is exactly one element $\ev{x,y}$ of $\omega\times\omega$ with $x>0$ such that $\ev{x,y}\in B_i$. Hence, if $f$ is a function with $f\subseteq B_i$ for some $i\in\omega$, then $\abs{\dom{f}}\leq 2$. Thus, functions colored with colors from $\{B_i:i\in\omega \}$ cannot be elements of $\tow{3}{3}$s.

As a result, it is enough to consider the colors $\{A_{j,i}:i,j\in\omega \}$ and the set $(\omega\times\omega)\setminus (\{ 0\}\times\omega)$. With this setup, we will show that there is not even $\tow{2}{2}$, which implies non-existence of $\tow{3}{3}$. Let $m_0, m_1\in\omega\setminus\{ 0\}, m_0\neq m_1,i_0,i_1\in\omega$ be such that   $\ev{m_0,i_0},\ev{m_1,i_1}\in\omega\times\omega$ have the same color, i.e., there are $j,i\in\omega$ such that $\ev{m_0,i_0},\ev{m_1,i_1}\in A_{j,i}$. As mentioned previously, $A_{j,i}$ occurs only in the row $\omega\times \{ i\}$, thus $i_0=i_1=i$. So we have $\ev{m_0,i},\ev{m_1,i}\in A_{j,i}$. 

Now, note that there are $k_0\neq k_1$ such that $m_0\in D_{k_0}$ and $m_1\in D_{k_1}$. If we assumed, by way of contradiction, that there is $k\in\omega$ such that $m_0, m_1\in D_k$, then $m_0$ is $(j+ki)$-th element of $D_k$ and so is $m_1$, a contradiction because $m_0\neq m_1$.

By the definition $m_0$ is $(j+k_0i)$-th element of $D_{k_0}$ and  $m_1$ is $(j+k_1i)$-th element of $D_{k_1}$. Consider now $\ev{m_0,i'},\ev{m_1,i'}$ for some $i'\neq i$ and assume they are colored with the same color $A_{j',i'}$. Then
$$m_0\text{ is }(j'+k_0i')\text{-th element of }D_{k_0},$$
$$m_1\text{ is }(j'+k_1i')\text{-th element of }D_{k_1}.$$
So we have that $m_0$ is $(j+k_0i)$-th and also $(j'+k_0i')$-th element of $D_{k_0}$,  $m_1$ is $(j+k_1i)$-th and also $(j'+k_1i')$-th element of $D_{k_1}$, i.e.,
$$j'+k_0i'=j+k_0i\ \land\ j'+k_1i'=j+k_1i$$
which is equivalent to
$$i'(k_0-k_1)=i(k_0-k_1)\ \equiv\ i=i',$$
a contradiction. \qed

We will show that $\sel$ is not a~$\pid{\selp{\E}}$-ideal, i.e., the necessary condition stated in Proposition~\ref{sufficient} in terms of existence of towers of monochromatic functions is not a sufficient condition.
\begin{theorem}\label{tazkaThm}
$\sel$ is not a $\pid{\selp{\E}}$-ideal.
 \end{theorem}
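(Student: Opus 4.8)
The plan is to exhibit an explicit countable family $\W=\{f_i:i\in\omega\}\subseteq\sel$ of pairwise disjoint partial functions witnessing that $\sel$ is not a~$\pid{\selp{\E}}$-ideal. By Proposition~\ref{ortaink} and the~structure of $\sel$, it suffices to show that for every $X\in\sel$ — equivalently, for every finite $F\subseteq{^\omega\omega}$ with $X=\bigcup F$ — there is some $f_i\in\W$ with $f_i\setminus X\notin\selp{\E}$, i.e., for every $m$ there is a~color $C\in\E$ with $\abs{(f_i\setminus X)\cap C}>m$. The~natural candidate is to build each $f_i$ so that it meets unboundedly many colors $A_{j,i}$ (all sitting in the~single row $\omega\times\{i\}$) in large finite pieces; concretely, $f_i$ should be a~function defined on $D_k\times\{i\}$-type pieces that, for infinitely many parameters, runs through a~whole color $A_{j,i}$ restricted to a~long initial block of the~$D_k$'s. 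The~key point the~\textbf{Observation} does \emph{not} rule out is that a~\emph{single} function can have arbitrarily large (but finite) intersection with a~color, even though no genuine $\tow{k}{k}$ exists for $k\geq 3$; the~counterexample must exploit exactly this gap.

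The~heart of the~argument is a~counting lemma tailored to $\E$. Fix a~finite $F\subseteq{^\omega\omega}$, say $\abs{F}=p$. For a~fixed row $i$ and a~fixed block of columns $D_{k_1},\dots,D_{k_N}$ (with $N$ large compared to $p$), consider the~points of $\omega\times\{i\}$ lying above this block: each column $D_{k}$ contributes, for each $j$, exactly one point colored $A_{j,i}$ (the~remark $\abs{A_{j,i}\cap(D_k\times\{i\})}=1$). Since $F$ consists of $p$ functions, at each column at most $p$ of these points are covered by $\bigcup F$; so if we look at the~points colored by a~\emph{common} color $A_{j,i}$ across the~block, a~pigeonhole/averaging argument over $j$ should yield, for infinitely many $j$, that the~number of columns in the~block where the~$A_{j,i}$-point is \emph{not} in $\bigcup F$ is at least, say, $N-p$ or at least $N/2$ — in any case unbounded as $N\to\infty$. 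Choosing $f_i$ (for suitable $i$) to contain precisely those $A_{j,i}$-points then gives $\abs{(f_i\setminus X)\cap A_{j,i}}$ arbitrarily large, so $f_i\setminus X\notin\selp{\E}$. One then diagonalizes: let $i$ range over $\omega$ and arrange that for the~finite set $F$ of size $p$, the~relevant ``escaping'' behaviour is found using row $i=p$ (or any single $i$ depending only on $\abs{F}$), so that the~fixed family $\W$ works against \emph{all} finite $F$ simultaneously — this mirrors the~$f_i=\bigcup_{k>i}g^k_i$ construction in Proposition~\ref{sufficient}, but with the~monochromatic pieces replaced by the~long-$A_{j,i}$-pieces described above.

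The~main obstacle, and the~place requiring real care, is making the~pigeonhole step genuinely produce \emph{infinitely many} good $j$'s for a~\emph{single} function $f_i$, rather than just one large finite piece per $F$. The~subtlety is that the~color $A_{j,i}$ is spread very thinly — one point per column $D_k$ — and a~function covering all of them across infinitely many columns is fine for $\sel$, but we must ensure the~$f_i$'s are pairwise disjoint and that the~escaping-from-$\bigcup F$ estimate survives when $F$ is not known in advance. This is handled by fixing, for each $i$, an~increasing sequence of column-blocks and letting $f_i$ take, on the~$n$-th block, the~$A_{j_n,i}$-points for a~carefully chosen $j_n\to\infty$; the~disjointness $A_{j,i}\cap A_{j',i}=\emptyset$ for $j\neq j'$ and $A_{j,i}\cap A_{j',i'}=\emptyset$ for $i\neq i'$ keeps the~$f_i$'s disjoint, while the~averaging bound — applied with $N$ larger on later blocks — guarantees that for \emph{any} $F$, once $\abs{F}$ is fixed, the~row $i=\abs{F}$ (or a~slightly larger fixed value) contains a~block on which $f_i$'s chosen color escapes $\bigcup F$ in more than $m$ points, for every $m$. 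Verifying that $f_i\in\sel$ (it meets every color in a~bounded number of points, in fact each $A_{j',i'}$ in at most the~block length with $j'=j_n$, $i'=i$, which is finite) and that $\bigcup\W$ genuinely has no $\selp{\E}$-majorant completes the~proof.
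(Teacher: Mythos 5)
There is a genuine gap, and it lies at the very heart of your counting lemma. You write that, for a~fixed row~$i$ and block $D_{k_1},\dots,D_{k_N}$, ``since $F$ consists of $p$ functions, at each column at most $p$ of these points are covered by $\bigcup F$.'' This is false for the ``columns'' $D_k$ you are considering. The bound ``at most $p$ points per column'' is correct only for genuine columns $\{m\}\times\omega$ (one point of each $f\in F$ per~$m$), where it counts points \emph{across rows}. Within a single row $\omega\times\{i\}$ restricted to a block $D_k$, the set $\{\ev{m,i}:m\in D_k\}$ is infinite and can be \emph{entirely} contained in $\bigcup F$ — take the constant function $m\mapsto i$ as one element of $F$. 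Your averaging argument over $j$ then collapses: there is no uniform per-block bound to average against, and for this $F$ every $A_{j,i}$-piece is fully covered in row~$i$. Diagonalizing over $i$ does not rescue the argument, because for the row you pick (say $i=p$), the adversary $F$ may simply include the constant function~$p$.

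More structurally, the flaw is that your construction and your estimate both live inside a single row, but the only place where the quantitative constraint ``$X$ is covered by $p$ functions'' genuinely bites is in a single column $\{m\}\times\omega$, where at most $p$ of the points $\ev{m,0},\ev{m,1},\dots$ lie in $X$. The paper's proof is designed precisely to convert row-information into column-information. It takes $f_i=\omega\times\{i\}$ for the witness, introduces the downward map $(A_{j,i+1}\downarrow D_k)=A_{j+k,i}$ (Claim~\ref{claim1}), defines chains and proves that an $X$-covered chain in row $i+1$ projects (with controlled loss, Claims~\ref{claim2} and~\ref{claim3}) to an $X$-covered chain in row~$i$ with the same domain of columns. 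Iterating from row $\abs{\F}$ down to row~$0$, and using the explicit recursion $q_l,p_l$ to keep the chains from vanishing, it produces a single column index~$m$ with $\ev{m,i}\in X$ for all $i\le\abs{\F}$ — that is, $\abs{\F}+1$ points of $X$ in one column — which is the actual contradiction. That cross-row, fixed-column conclusion is exactly the counting fact you tried (incorrectly) to invoke within a single row, and without the downward-map/chain machinery there is no mechanism in your sketch to reach it.
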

\proof By contradiction. Assume $\sel$ is $\pid{\selp{\E}}$-ideal. Set $f_i=\omega\times\{ i\}$. We will show that $\W=\{f_i:i\in\omega \}$ is a witness for $\sel$ being not a $\pid{\selp{\E}}$-ideal.

Since $\sel$ is a $\pid{\selp{\E}}$-ideal, there is $X\in\sel$, $X\subseteq \bigcup\F$ for some $\F\subseteq {^\omega\omega},\abs{\F}<\omega$, such that $f_i\setminus X\in\selp{\E}$ for all $i\in\omega$, i.e., there is $\langle k_i\in\omega:i<\omega \rangle$ with $\abs{(f_i\setminus X)\cap A}\leq k_i$ for all $A\in\E$ and $i\in\omega$. 

\vspace{0.3cm}
Now, we introduce some new notation and definitions for the~sake of comprehensibility of the~proof. We define a~function assigning a~color lying  below a~selected color on a set~$D_k$, i.e., function $(\cdot\downarrow\cdot)$, by setting 
$$(A_{j,i}\downarrow D_k)=A_{j',i-1}$$
if $\proj{1}{A_{j,i}}\cap D_k=\proj{1}{A_{j',i-1}}\cap D_k$.

\begin{claim}\label{claim1}
$(A_{j,i+1}\downarrow D_k) = A_{j+k,i}$ for all $i,j,k\in\omega$.
\end{claim}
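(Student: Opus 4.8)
The claim is a purely arithmetical identity about the position of a point in the auxiliary sets $D_k$, so the plan is to unwind the definitions on both sides of the equation and compare the resulting indices.

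First I would recall what the symbol $(A_{j,i+1}\downarrow D_k)$ means: it is the unique color $A_{j',i}$ of the form $A_{\bullet,i}$ such that $\proj{1}{A_{j,i+1}}\cap D_k = \proj{1}{A_{j',i}}\cap D_k$. By the remark made just after the construction of $\E$, namely $\abs{A_{j,i}\cap(D_k\times\{i\})}=1$ for all $i,j,k$, each side of this set equation is a single point of $\omega$, so the condition amounts to saying that the element of $D_k$ sitting under $A_{j,i+1}$ coincides with the element of $D_k$ sitting under $A_{j',i}$. Hence it suffices to identify, for fixed $k$, which element of $D_k$ lies under a given color $A_{\bullet,i}$.

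Next I would use the defining clause of $\E$: $\ev{m,i}\in A_{j,i}$ if and only if $m$ is the $(j+ki)$-th element of $D_k$, where $k$ is the index with $m\in D_k$. Applying this with row $i+1$ and color $A_{j,i+1}$, the point of $D_k$ under $A_{j,i+1}$ is the $(j+k(i+1))$-th element of $D_k$, i.e. the $(j+ki+k)$-th element. Applying it instead with row $i$ and a candidate color $A_{j',i}$, the point of $D_k$ under $A_{j',i}$ is the $(j'+ki)$-th element of $D_k$. For these two elements to be equal we need $j'+ki = j+ki+k$, i.e. $j'=j+k$. Thus the unique $j'$ making the set equation hold is $j'=j+k$, which is exactly $(A_{j,i+1}\downarrow D_k)=A_{j+k,i}$.

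There is essentially no obstacle here beyond bookkeeping; the only point requiring a word of care is that the ``$\downarrow$'' operation is well-defined, i.e. that there really is such a color $A_{j',i}$ and that it is unique. Uniqueness is immediate because distinct colors $A_{j',i}$ and $A_{j'',i}$ occupy distinct elements of $D_k$ (the position $j'+ki$ determines $j'$), and existence is guaranteed because the full list of colors $\set{A_{j,i}}{j\in\omega}$ in row $i$ is exactly indexed by the positions $ki, ki+1, ki+2,\dots$ in $D_k$, together with the $B$-colors absorbing the finitely many positions $0,1,\dots,ki-1$; since $j+k(i+1)=j+ki+k\geq k(i+1)>k i$, the position $j+k(i+1)$ is among those assigned to an $A$-color in row $i$, and that color is $A_{j+k,i}$. (Here I am implicitly assuming $j+k(i+1)\ge ki$, which holds automatically for $j,k,i\in\omega$.) This completes the verification.
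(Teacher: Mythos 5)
Your proof is correct and takes essentially the same route as the paper's: both unwind the defining clause of $\E$ to identify the position in $D_k$ under each color and then solve $j'+ki = j+k(i+1)$ to get $j'=j+k$. You are somewhat more explicit than the paper about why the operation $(\cdot\downarrow D_k)$ is well-defined (existence and uniqueness of $j'$), which the paper dispatches with ``one can easily see,'' but this is a matter of exposition rather than a different argument.
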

\proofofclaim Let $\ev{m,i+1}\in A_{j,i+1}$ with $m\in D_k$, i.e., $m$ is $(j+k(i+1))$-th element of $D_k$. One can easily see there is a $j'\in\omega$ such that $(A_{j,i+1}\downarrow D_k)=A_{j',i}$. By the definition, $m$ is also $(j'+ki)$-th element of $D_k$. We have that $$j+k(i+1)=j'+ki\ \equiv\ j'=j+k.$$ \qed

An~integer interval $\{x\in\omega: a\leq x\leq b \}$ with endpoints $a,b\in\omega$ is denoted by $\boldsymbol{[a,b]}$. If $s\leq t,u\leq v$ then the~set 
$$\bigcup_{j=s}^t A_{j,i}\cap \bigcup_{k=u}^v\left(D_k\times\{ i\}\right)$$
is called an~$\boldsymbol{\chain{i}{s,t}{u,v}}$. Note that an~$\chain{i}{s,t}{u,v}$ is a~finite function that can be partitioned into $d(a)\coloneqq\abs{[u,v]}=v-u+1$ many functions $g_j$, $j<d(a)$, such that $g_j$ is defined on $D_{u+j}$ and each $g_j$ contains $l(a)\coloneqq\abs{[s,t]}=t-s+1$ many elements.

\begin{claim}\label{claim2}
For every $\chain{i+1}{s,t}{u,v}$ $a$ there is an $\chain{i}{s',t'}{u,v}$ $b$ with $l(b)=l(a)-d(a)+1$ and $\dom{b}\subseteq \dom{a}$.
\end{claim}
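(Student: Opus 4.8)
\textbf{Proof plan for Claim~\ref{claim2}.}

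The plan is to unwind the definitions of the $(\cdot\downarrow\cdot)$ operation and of an $\chain{i+1}{s,t}{u,v}$, and to track what happens to the block of colors $A_{s,i+1},\dots,A_{t,i+1}$ when we drop down one row over the columns $D_u,\dots,D_v$. Write $a$ for the given $\chain{i+1}{s,t}{u,v}$, so $a=\bigcup_{j=s}^t A_{j,i+1}\cap\bigcup_{k=u}^v(D_k\times\{i+1\})$. For each $k\in[u,v]$, the part of $a$ sitting above $D_k$ consists of exactly $l(a)=t-s+1$ points, one in each of the colors $A_{s,i+1},\dots,A_{t,i+1}$, and by Claim~\ref{claim1} these points lie, after moving to row $i$, in the colors $A_{s+k,i},\dots,A_{t+k,i}$. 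So over column $D_k$ the dropped-down set occupies the color-interval $[s+k,\,t+k]$ in row $i$, and as $k$ ranges over $[u,v]$ this interval shifts linearly with $k$.

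The key step is to identify the intersection of all these shifted intervals. The color-intervals over successive columns are $[s+u,t+u]\supseteq$-overlapping-with $[s+u+1,t+u+1]$, etc., up to $[s+v,t+v]$; their common part is $\bigcap_{k=u}^v[s+k,t+k]=[s+v,\,t+u]$, which is a genuine (nonempty) interval precisely when $t+u\ge s+v$, i.e. when $l(a)=t-s+1\ge v-u+1=d(a)$. Set $s'=s+v$ and $t'=t+u$; then $l(b)\coloneqq t'-s'+1=(t+u)-(s+v)+1=(t-s+1)-(v-u+1)+1=l(a)-d(a)+1$, as required. Define $b=\bigcup_{j=s'}^{t'}A_{j,i}\cap\bigcup_{k=u}^v(D_k\times\{i\})$; this is by construction an $\chain{i}{s',t'}{u,v}$ with the right length, and since every point of $b$ over a column $D_k$ is the image under $(\cdot\downarrow D_k)$ of a point of $a$ over $D_k$ (the color $A_{j,i}$ with $s'\le j\le t'$ lies in $[s+k,t+k]$ for every $k\in[u,v]$, hence equals $(A_{j-k,i+1}\downarrow D_k)$ with $s\le j-k\le t$), we get $\dom{b}\subseteq\dom{a}$.

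The only subtlety to be careful about is the bookkeeping with Claim~\ref{claim1}: it gives $(A_{j,i+1}\downarrow D_k)=A_{j+k,i}$, so the downward map sends the color-index $j$ in row $i+1$ over column $D_k$ to the index $j+k$ in row $i$; one must check that the columns of $a$ and the columns of $b$ are literally the same set $\{D_u,\dots,D_v\}$ (they are, since $(\cdot\downarrow D_k)$ never changes which column a point sits above), and that the implicit hypothesis $l(a)\ge d(a)$ — without which $[s',t']$ is empty and there is nothing to assert — is exactly the situation in which the claim will be applied in the sequel. I expect the main obstacle to be purely notational: keeping the direction of the index shift straight and verifying that the "diagonal" intersection of the shifted color-blocks is the interval $[s+v,\,t+u]$ rather than $[s+u,\,t+v]$.
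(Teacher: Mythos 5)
Your proof is correct and follows essentially the same route as the paper: both use Claim~1 to project the block of colors down one row over each column $D_k$, observe that the resulting color-interval shifts by $k$, and land on the common block $[s',t']=[s+v,\,t+u]$ (the paper writes this as $[\,t+u-(L-1),\,t+u\,]$ with $L=l(a)-d(a)+1$, which is the same interval). Your "intersection of shifted intervals" framing is a slightly more conceptual packaging of the paper's column-by-column sub-block bookkeeping $a_n$, and your remark that the construction is only meaningful when $l(a)\ge d(a)$ is a valid observation about an implicit hypothesis that the paper leaves tacit but which does hold in every application.
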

\proofofclaim Denote by $L=l(a)-d(a)+1$. Consider intervals $[(t-L+1)-n,t-n]$ and $D_{u+n}$ for $n<d(a)$. 

Note that for any $n<d(a)$
$$a_n\coloneqq\bigcup_{\substack{j=(t-L+1)-n \\ =t-n-(L-1)}}^{t-n}A_{j,i+1}\cap (D_{u+n}\times\{i+1\})\subseteq a,$$
because $t-n\leq t$ and $(t-L+1)-n>(t-L+1)-d(a)=(t-(l(a)-d(a)+1)+1)-d(a)=t-l(a)=t-(t-s+1)=s-1$, so $(t-L+1)-n\geq s$. Also, Claim~1 gives us $$(A_{t-n-o,i+1}\downarrow D_{u+n})=A_{t+u-o,i}\ \text{for all }o<L.$$

We have that
$$\bigcup_{j=t-n-(L-1)}^{t-n} (A_{j,i+1}\downarrow D_{u+n})\cap (D_{u+n}\times\{ i\})=\bigcup_{o=0}^{L-1}(A_{t-n-o,i+1}\downarrow D_{u+n})\cap (D_{u+n}\times\{ i\})=$$
$$=\bigcup_{o=0}^{L-1}A_{t+u-o,i}\cap (D_{u+n}\times\{i \}).$$
By the definition of $(\cdot\downarrow\cdot)$ we have

\begin{equation}\tag{*}
\proj{1}{a_n}=\proj{1}{\bigcup_{o=0}^{L-1}A_{t+u-o,i}\cap (D_{u+n}\times\{i \})}. 
\end{equation}

Set $$b=\bigcup_{n<d(a)}\left( \bigcup_{o=0}^{L-1}A_{t+u-o,i}\cap (D_{u+n}\times\{i \})\right)=\bigcup_{o=0}^{L-1}A_{t+u-o,i}\cap\bigcup_{n=0}^{d(a)-1}(D_{u+n}\times\{i \})=$$
$$=\bigcup_{o=0}^{L-1}A_{t+u-o,i}\cap \bigcup_{k=u}^v(D_k\times\{ i\}).$$
The set $b$ is $\chain{i}{s',t'}{u,v}$ with $s'=t+u-(L-1), t'=t+u$, so $l(b)=t'-s'+1=L=l(a)-d(a)+1$.
Using disjointness of $D_k$'s, fact that $a_n\subseteq a$  for $n<d(a)$ and (*) we have that
$$\dom{b}=\proj{1}{\bigcup_{n<d(a)}\left( \bigcup_{o=0}^{L-1}A_{t+u-o,i}\cap (D_{u+n}\times\{i \})\right)}\subseteq \proj{1}{a}=\dom{a}.$$ \qed

The $\chain{i}{s,t}{u,v}$ $a$ is \textbf{$\boldsymbol{X}$-covered} if $a\subseteq X$. The color $A_{j,i}$ is \textbf{$\boldsymbol{X}$-covered on $\boldsymbol{[u,v]}$ (on $\boldsymbol{D_k})$} if $$A_{j,i}\cap\bigcup_{k=u}^v(D_k\times\{ i\})\subseteq X\ \ \  (A_{j,i}\cap (D_k\times\{ i\})\subseteq X \text{ respectively}).$$

\begin{claim}\label{claim3}
For every $\chain{i}{s',t'}{u,v}$ $b$ with $d(b)\geq (k_i+1)^{l(b)}$ there is an $X$-covered $\chain{i}{s',t'}{u',v'}$ $c$ such that $d(c)=\floor*{\frac{d(b)}{(k_i+1)^{l(b)}}}$ and $c\subseteq b$.
\end{claim}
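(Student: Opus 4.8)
The idea is a straightforward pigeonhole/counting argument over the columns $D_k$ appearing in the chain $b$. First I would fix the notation: $b$ is an $\chain{i}{s',t'}{u,v}$, so it decomposes into $d(b)$ many functions $b_k$ (for $k\in[u,v]$), each of which is defined on $l(b)$ many points lying above $D_k$, and each such point $\ev{m,i}$ satisfies $(f_i\setminus X)\cap A \leq k_i$ for every color $A\in\E$. The key observation is that, restricted to a single column $D_k$, the set $b\cap(D_k\times\{i\})$ is a function on $l(b)$ points, and for each of these points the color is one of the colors $A_{j,i}$ with $s'\leq j\leq t'$; by the hypothesis $\abs{(f_i\setminus X)\cap A_{j,i}}\leq k_i$, a color that is \emph{not} $X$-covered on $D_k$ must miss at least one point, but here each color contributes exactly one point above $D_k$ (as noted right after the construction of $\E$: $\abs{A_{j,i}\cap(D_k\times\{i\})}=1$). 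So I would instead phrase it as: for each column $D_k$ I record the finite pattern $\sigma_k\in\{0,1,\dots,k_i\}^{[s',t']}$ — or more precisely the vector telling, for each $j\in[s',t']$, whether the point of $A_{j,i}$ above $D_k$ lies in $X$; the relevant count is the number of distinct such patterns, which is at most $(k_i+1)^{l(b)}$ (using $l(b)=\abs{[s',t']}$).

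Next, I would apply the pigeonhole principle across the $d(b)$ columns: since there are at most $(k_i+1)^{l(b)}$ possible patterns and $d(b)\geq (k_i+1)^{l(b)}$ columns, some pattern is shared by at least $\lceil d(b)/(k_i+1)^{l(b)}\rceil \geq \lfloor d(b)/(k_i+1)^{l(b)}\rfloor$ of the columns. Actually the cleanest route is: among the patterns, consider the one that is identically "$X$-covered" on all of $[s',t']$; but that need not occur. So the honest argument must be: we want columns on which \emph{all} colors $A_{s',i},\dots,A_{t',i}$ are $X$-covered. For a single column $D_k$, "not all covered" means at least one of the $l(b)$ points above $D_k$ is outside $X$. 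I'd want to count columns with at least one uncovered point and show there cannot be too many — but that is false in general, so the pattern-based pigeonhole is the right one: take the most popular pattern $\tau$, occurring on a set $K\subseteq[u,v]$ of columns with $\abs{K}\geq \lfloor d(b)/(k_i+1)^{l(b)}\rfloor$. If $\tau$ is the all-covered pattern we are done by restricting $b$ to the columns in $K$ and, if necessary, to a sub-interval of consecutive columns of the required length (this is legitimate since we only need $d(c)=\lfloor d(b)/(k_i+1)^{l(b)}\rfloor$ and $c\subseteq b$, and the statement of the claim does not demand the columns be the first ones). Finally I'd need to rule out the case where the most popular pattern is not all-covered, which is where the constraint $\abs{(f_i\setminus X)\cap A}\leq k_i$ must be invoked.

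This last point is the main obstacle and needs care. The resolution: consider a color $A_{j,i}$ with $s'\leq j\leq t'$. Each column $D_k$ contributes exactly one point of $A_{j,i}$ above it (within the row $i$), and by hypothesis at most $k_i$ of these points lie outside $X$ — wait, that is over all columns, which does not immediately bound things per pattern. The correct counting is the reverse: fix a column $D_k$. If in pattern $\tau$ some color $A_{j,i}$ is uncovered on $D_k$, that contributes one point of $f_i\setminus X$ in $A_{j,i}$. But the same color $A_{j,i}$ is uncovered on every column in $K$ (all columns in $K$ share pattern $\tau$), giving $\abs{K}$ many points of $f_i\setminus X$ inside the single color $A_{j,i}$. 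Since $\abs{(f_i\setminus X)\cap A_{j,i}}\leq k_i$, this forces $\abs{K}\leq k_i$. Therefore, as long as $\lfloor d(b)/(k_i+1)^{l(b)}\rfloor > k_i$ (which we may assume, or handle the boundary trivially since $d(c)$ could be small), the most popular pattern $\tau$ must be the all-covered one, so the chain $c = b\restriction \bigcup_{k\in K'}(D_k\times\{i\})$ for a suitable sub-collection $K'\subseteq K$ of size exactly $\lfloor d(b)/(k_i+1)^{l(b)}\rfloor$ of consecutive columns is $X$-covered, satisfies $c\subseteq b$, keeps the same $s',t'$ (hence $l(c)=l(b)$), and has $d(c)=\lfloor d(b)/(k_i+1)^{l(b)}\rfloor$ as required. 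I would write the argument so that the pigeonhole is applied to count, for each color separately, how many columns can be uncovered in that color, then intersect over the $l(b)$ colors; this gives directly that the number of columns on which \emph{some} color is uncovered is at most $l(b)\cdot$ (something), but the pattern formulation above is cleaner, so I would present that one and relegate the per-color bound to the verification that non-all-covered patterns are rare.
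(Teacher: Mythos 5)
Your argument finds a \emph{set} $K\subseteq[u,v]$ of columns on which every color $A_{j,i}$, $s'\leq j\leq t'$, is $X$-covered, but the claim requires the output $c$ to be a $\chain{i}{s',t'}{u',v'}$, whose columns must form a consecutive integer interval $[u',v']$: this is built into the definition $\bigcup_{j=s}^{t}A_{j,i}\cap\bigcup_{k=u}^{v}(D_k\times\{i\})$. You propose to pass to ``a sub-interval of consecutive columns of the required length'' inside $K$, but $K$ need not contain any such interval at all --- a set produced by pigeonhole over patterns can be arbitrarily scattered inside $[u,v]$, e.g.\ every other column. The interval structure of the columns is not cosmetic here: Claim~\ref{claim2}, the downward step of the induction in which Claim~\ref{claim3} is embedded, uses the shift $(A_{j,i+1}\downarrow D_{u+n})=A_{j+u+n,i}$ of Claim~\ref{claim1} over a run of \emph{consecutive} column indices $u,u+1,\dots,v$ so that the sliding color window $[(t-L+1)-n,\,t-n]$ collapses to a fixed color interval on the lower level; this cancellation fails if the column indices are not consecutive. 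The paper avoids the problem by iterating a single-color pigeonhole statement, Lemma~\ref{pravdivost}, which is deliberately phrased to return a sub-\emph{interval}: one color $A_{j,i}$ at a time, it shrinks a nested chain of intervals $[u_{s'},v_{s'}]\supseteq\cdots\supseteq[u_{t'},v_{t'}]$ by a factor of $(k_i+1)$, and the iterated-floor identity (Lemma~\ref{celacast}) produces exactly $\floor*{d(b)/(k_i+1)^{l(b)}}$ at the end.

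Two smaller points. First, your pattern space is really $\{0,1\}^{[s',t']}$ (each of the $l(b)$ points above a fixed $D_k$ is simply in or out of $X$), of size $2^{l(b)}$ rather than $(k_i+1)^{l(b)}$; the overcount is harmless for the existence of a popular class, but it is not what ties the bound to $(k_i+1)^{l(b)}$. Second, your argument that a non-all-covered most-popular pattern $\tau$ forces $\abs K\leq k_i$ only discharges the case $\floor*{d(b)/(k_i+1)^{l(b)}}>k_i$; the cleaner count --- at most $k_i$ columns fail for each of the $l(b)$ colors, hence at most $l(b)k_i$ columns fail in total, and $(k_i+1)^{l(b)}\geq 1+l(b)k_i$ --- removes this case split, but it again only yields a set of good columns and so does not repair the main gap about consecutiveness.
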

Before we prove this claim, let us point out that $l(c)=l(b)$. Also, we will need the following lemmas.
\begin{lema}\label{pravdivost}
Let $[u,v]$ be an integer interval with $d\coloneqq \abs{[u,v]}$. If $\varphi(x)$ holds for all but (at most) $k$ points in $[u,v]$, then there is an interval $[u',v']\subseteq [u,v]$ with $\abs{[u',v']}=\floor*{\frac{d}{k+1}}$, such that $\varphi(x)$ holds on $[u',v']$.
\end{lema}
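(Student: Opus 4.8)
\textbf{Plan for Lemma~\ref{pravdivost}.}
The plan is to argue by contradiction on the structure of the ``bad'' points, those $x\in[u,v]$ where $\varphi(x)$ fails. First I would let $B\subseteq[u,v]$ be the set of bad points, so $\abs{B}\leq k$, and enumerate $B\cup\{u-1,v+1\}$ in increasing order as $u-1=b_0<b_1<\dots<b_r<b_{r+1}=v+1$ with $r\leq k$. The complement $[u,v]\setminus B$ decomposes into the integer intervals $[b_j+1,b_{j+1}-1]$ for $j=0,\dots,r$, each of which consists entirely of ``good'' points (points where $\varphi$ holds); note some of these intervals may be empty. The total number of good points is at least $d-k$, distributed among at most $r+1\leq k+1$ such intervals, so by the pigeonhole principle at least one of them, say $[b_j+1,b_{j+1}-1]$, has at least $\lceil (d-k)/(k+1)\rceil$ elements.

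The remaining step is a routine arithmetic check that $\lceil (d-k)/(k+1)\rceil\geq\lfloor d/(k+1)\rfloor$, which lets me shrink the chosen interval to one of length exactly $\lfloor d/(k+1)\rfloor$ on which $\varphi$ still holds everywhere. Writing $d=q(k+1)+s$ with $0\leq s\leq k$, we have $\lfloor d/(k+1)\rfloor=q$, while $d-k=q(k+1)+s-k=(q-1)(k+1)+s+1$, so $(d-k)/(k+1)=q-1+(s+1)/(k+1)$; since $0<s+1\leq k+1$ this gives $\lceil(d-k)/(k+1)\rceil\geq q$ (with equality unless $s=k$, in which case it is $q+1$). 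Hence the chosen good interval has length at least $q=\lfloor d/(k+1)\rfloor$, and any sub-interval of it of that exact length works.

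I do not expect any real obstacle here; the only mild subtlety is bookkeeping at the boundary (adding the sentinels $u-1$ and $v+1$ so that the bad points cut $[u,v]$ into at most $k+1$ runs of good points, some possibly empty) and the floor/ceiling inequality above. Both are elementary, so this lemma is essentially a clean pigeonhole argument; the substance of the section lies in Claim~\ref{claim3} and Theorem~\ref{tazkaThm}, for which this lemma is merely a tool.
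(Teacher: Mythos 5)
Your proof is correct, but it takes a genuinely different decomposition from the paper's. The paper partitions $[u,v]$ directly into $k+1$ pairwise disjoint subintervals each of size exactly $\lfloor d/(k+1)\rfloor$ (which fit since $(k+1)\lfloor d/(k+1)\rfloor\le d$), and then notes that the at most $k$ bad points can meet at most $k$ of these $k+1$ blocks, so at least one block consists entirely of good points — no floor/ceiling comparison is needed. You instead cut $[u,v]$ at the bad points into at most $k+1$ maximal runs of good points, apply pigeonhole to the count of good points, and then verify $\lceil(d-k)/(k+1)\rceil\ge\lfloor d/(k+1)\rfloor$. Both are valid pigeonhole arguments of comparable length; the paper's has the mild advantage of avoiding the arithmetic step entirely, while yours is arguably closer to how one would first think of it. One small slip in your parenthetical: writing $d=q(k+1)+s$ with $0\le s\le k$, when $s=k$ you get $(d-k)/(k+1)=q$ exactly, so the ceiling is still $q$, not $q+1$; in fact $\lceil(d-k)/(k+1)\rceil=q$ in every case. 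This does not affect your conclusion, since all you need is $\ge q$.
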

\proof There exist $(k+1)$ disjoint subintervals of $[u,v]$ with cardinality $\floor*{\frac{d}{k+1}}$. Since $\varphi(x)$ is not true for (at most) $k$ points in $[u,v]$, $\varphi(x)$ must be true on (at least) one of these $(k+1)$ intervals by the pigeonhole principle.\qed 
\begin{lema}[folklore]\label{celacast}
 If $a\in\mathbb{Z},b,c,\in\omega\setminus\{ 0\}$, then $\floor*{\frac{\floor*{\frac{a}{b}}}{c}}=\floor*{\frac{a}{bc}}$.\qed
\end{lema}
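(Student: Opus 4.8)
The plan is to prove the identity directly by two successive applications of the division algorithm, which is valid since $b$ and $c$ are positive integers even though $a$ may be negative. First I would write $a = qb + r$ with $q \in \mathbb{Z}$ and $0 \leq r < b$; by the definition of the floor function this gives $\floor*{\frac{a}{b}} = q$. Next I would divide $q$ itself by $c$: write $q = q'c + r'$ with $q' \in \mathbb{Z}$ and $0 \leq r' < c$, so that $\floor*{\frac{\floor*{a/b}}{c}} = \floor*{\frac{q}{c}} = q'$.

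Then I would substitute back to express $a$ in terms of $q'$: $a = qb + r = (q'c + r')b + r = q'bc + (r'b + r)$. The one point that needs checking is that the remainder term $r'b + r$ lies in the interval $[0,\, bc)$. It is clearly nonnegative, and since $r' \leq c-1$ and $r \leq b-1$ we get $r'b + r \leq (c-1)b + (b-1) = bc - 1 < bc$. Hence $q'bc + (r'b+r)$ is the (unique) representation of $a$ produced by dividing by the positive integer $bc$, and therefore $\floor*{\frac{a}{bc}} = q'$. Combining this with $\floor*{\frac{\floor*{a/b}}{c}} = q'$ yields the claim.

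There is essentially no obstacle here, as this is a routine folklore computation; the only thing worth a moment's attention is that the division algorithm (existence and uniqueness of quotient and remainder) indeed applies to an arbitrary integer dividend with a positive divisor. An alternative route would be to first prove the more general statement that $\floor*{\frac{\floor*{x}}{c}} = \floor*{\frac{x}{c}}$ for every real $x$ and positive integer $c$ — using that for an integer $n$ one has $n \leq \floor*{x}$ if and only if $n \leq x$ — and then specialize to $x = a/b$, noting $\floor*{\frac{a/b}{c}} = \floor*{\frac{a}{bc}}$; but the division-algorithm argument above is shorter and entirely self-contained, so that is the version I would write up.
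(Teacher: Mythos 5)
Your proof is correct and complete. The paper states this lemma as folklore and gives no proof (the \qed appears immediately after the statement), so there is no paper argument to compare against; your double-division-algorithm argument, with the explicit check that $0 \le r'b + r < bc$, is the standard and entirely adequate justification.
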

\proofofclaim By the assumptions $\abs{(f_i\cap A_{j,i})\setminus X}=\abs{((\omega\times\{ i\})\cap A_{j,i})\setminus X}\leq k_i$ for all $j\in\omega$. We will inductively show that there are intervals $[u_j, v_j], s'\leq j\leq t'$ such that 
\begin{itemize}
    \item $[u_{j+1},v_{j+1}]\subseteq [u_j,v_j]$ for $s'\leq j<t'$,
    \item $A_{j,i}$ is $X$-covered on $[u_j,v_j]$ (hence, $A_{j,i}$ is $X$-covered also on $[u_{j'},v_{j'}]$ for $j'>j$ and $A_{j,i}$ is $X$-covered on $[u_{t'},v_{t'}]$ for every $j\in[s',t']$),
    \item $\abs{[u_j, v_j]}=v_j-u_j+1=\floor*{\frac{d(b)}{(k_i+1)^{(j+1)-s'}}}$ for $j\in [s',t']$.
\end{itemize}

Induction:
\begin{itemize}
    \item $j=s':$ Let $\varphi(k)$ be ``$A_{j,i}$ is $X$-covered on $D_k$''. By the assumptions $\varphi(k)$ is true for all but (wlog) $k_i$ points in $[u,v]$. After application of Lemma~\ref{pravdivost} we get $[u_{s'},v_{s'}]\subseteq [u,v]$ with $\abs{[u_{s'},v_{s'}]}=v_{s'}-u_{s'}+1=\floor*{\frac{d(b)}{k_i+1}}$.
    \item $s'\leq  j<t':$ Let $[u_j, v_j]$ be such that $A_{j,i}$ is $X$-covered on $[u_j,v_j]$ and $\abs{[u_j,v_j]}=\floor*{\frac{d(b)}{(k_i+1)^{(j+1)-s'}}}$. Let  $\varphi(k)$ be ``$A_{j+1,i}$ is $X$-covered on $D_k$''. By the assumptions $\varphi(k)$ is true for all but (wlog) $k_i$ points in $[u_j,v_j]$. By Lemma~\ref{pravdivost} there is $[u_{j+1},v_{j+1}]\subseteq[u_j,v_j]$ such that $A_{j+1,i}$ is $X$-covered on $[u_{j+1},_{j+1}]$  and 
    $$\abs{[u_{j+1},_{j+1}]}=\floor*{\frac{\abs{[u_j,v_j]}}{k_i+1}}=\floor*{\frac{\floor*{\frac{d(b)}{(k_i+1)^{(j+1)-s'}}}}{k_i+1}}=\floor*{\frac{d(b)}{(k_i+1)^{((j+1)+1)-s'}}}.$$
    The last equation follows from Lemma~\ref{celacast}. 
    \end{itemize}
 
 Consequently, we get $[u_{t'},v_{t'}]$  such that $A_{t',i}$ is $X$-covered on $[u_{t'},v_{t'}]$ and 
 $$\abs{[u_{t'},v_{t'}]}=\floor*{\frac{d(b)}{(k_i+1)^{t'-s'+1}}}=\floor*{\frac{d(b)}{(k_i+1)^{l(b)}}}>0.$$
 
 Since the sequence $[u_j,v_j]$ is $\subseteq$-decreasing, $[u_{t'},v_{t'}]\subseteq [u_j, v_j]$ for $ j\in[s',t']$.
 
 If we denote by $u'=u_{t'}, v'=v_{t'}$ and define $$c=\bigcup_{j=s'}^{t'}A_{j,i}\cap \bigcup_{k=u'}^{v'}(D_k\times\{i\})\subseteq X,$$ we can write 
 $$d(c)=\abs{[u',v']}=\floor*{\frac{d(b)}{(k_i+1)^{t'-s'+1}}}=\floor*{\frac{d(b)}{(k_i+1)^{l(b)}}}.$$
One can easily check that $c\subseteq b$. This is implied by the fact that $[u',v']\subseteq [u,v]$ and $[s',t']$ remained the same.\qed

Now we can finish the proof of the theorem. Define the following sequences:
\begin{flalign*}
&q_0=1, p_0=1,\\
&q_l=q_{l-1}(k_{l-1}+1)^{p_{l-1}}, \\ 
&p_l=q_l+p_{l-1}-1=q_{l-1}(k_{l-1}+1)^{p_{l-1}}+p_{l-1}-1.
\end{flalign*}

We will show that there is an $X$-covered $\chain{i}{s_i,t_i}{u_i,v_i}$ $a_i$ with $l(a_i)=p_i$, $d(a_i)=q_i$, for each $\abs{\F}\geq i\geq 0$ such that $\dom{a_{i+1}}\subseteq\dom{a_i}$ for $0\leq i < \abs{\F}-1$.

Induction:
\begin{itemize}
    \item Find an $X$-covered $\chain{\abs{\F}}{s_{\abs{\F}}, t_{\abs{\F}}}{u_{\abs{\F}}, v_{\abs{\F}}}$ $a_{\abs{\F}}$ with $s_{\abs{\F}}, t_{\abs{\F}}, u_{\abs{\F}}, v_{\abs{\F}}$ such that
    $l(a_{\abs{\F}})=p_{\abs{\F}}$, $d(a_{\abs{\F}})=q_{\abs{\F}}$. Note that in every row $\omega\times\{ i\}$ there is always such a chain.\footnote{Take, e.g., $s_{\abs{\F}}=0, t_{\abs{\F}}=p_{\abs{\F}}-1$. For any $j\in[0,p_{\abs{\F}}-1]$ there is $m_j$ such that $A_{j,\abs{\F}}$ is $X$-covered on $D_m$ for $m>m_j$. Set $u_{\abs{\F}}=\max\{m_j:j\in[0,p_{\abs{\F}}-1]\}+1$ and $v_{\abs{\F}}=u_{\abs{\F}}+q_{\abs{\F}}-1$.}
    
\item $0<i\leq \abs{\F}:$ let $a_i$ be an $X$-covered $\chain{i}{s_i,t_i}{u_i,v_i}$ with $l(a_i)=p_i, d(a_i)=q_i$. Then by Claim~\ref{claim2} there is an $\chain{i-1}{s_{i-1},t_{i-1}}{u,v}$ $a'_{i-1}$ with $$l(a'_{i-1})=l(a_i)-d(a_i)+1=p_i-q_i+1=q_i+p_{i-1}-1-q_i+1=p_{i-1},$$
$$d(a'_{i-1})=d(a_i),\ \ \dom{a'_{i-1}}\subseteq\dom{a_i}.$$
Note also that $a'_{i-1}$ satisfies the condition $d(a'_{i-1})\geq(k_{i-1}+1)^{l(a'_{i-1})}$. Hence, Claim~\ref{claim3} implies the existence of an $\chain{i-1}{s_{i-1},t_{i-1}}{u_{i-1},v_{i-1}}$ $a_{i-1}$ such that
$$d(a_{i-1})=\floor*{\frac{d(a'_{i-1})}{(k_{i-1}+1)^{l(a'_{i-1})}}}=\floor*{\frac{d(a_{i})}{(k_{i-1}+1)^{p_{i-1}}}}=\floor*{\frac{q_i}{(k_{i-1}+1)^{p_{i-1}}}}=$$
$$=\floor*{\frac{q_{i-1}(k_{i-1}+1)^{p_{i-1}}}{(k_{i-1}+1)^{p_{i-1}}}}=q_{i-1},$$
$$l(a_{i-1})=l(a'_{i-1})=p_{i-1}\ \ \text{and}\ \ a_{i-1}\subseteq a'_{i-1},$$
consequently $\dom{a_{i-1}}\subseteq \dom{a_i}$.
\end{itemize}

Lastly, we get an $X$-covered $\chain{0}{s_0,t_0}{u_0,v_0}$ $a_0$ with $d(a_0)=q_0=1$, $l(a_0)=p_0=1$ and $\dom{a_0}\subseteq \dom{a_j}$ for $0\leq j\leq \abs{\F}$, i.e., we get $m\in\dom{a_0}$ such that $\ev{m,i}\in X$ for $0\leq i\leq \abs{\F}$ and this cannot be done using $\abs{\F}$ functions, which X consists of, a contradiction.
\qed

It is possible to prove a~stronger result stated in Proposition~\ref{selnotp-ex_towers}. However, the~proof is a~more complex copy of the~proof of Theorem~\ref{tazkaThm}. We do not proceed with a~complete proof but rather point out which parts of the~proof of Theorem~\ref{tazkaThm} should be modified. 
 
\begin{prop}\label{selnotp-ex_towers}
$\sel$ is not a $\pid{\EDp{\E}}$-ideal.
\end{prop}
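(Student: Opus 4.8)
The plan is to mimic the proof of Theorem~\ref{tazkaThm} but to track an \emph{additional} parameter measuring how the colors change as we descend through rows, since $\EDp{\E}$ permits only finitely many columns to carry more than a bounded number of points of a fixed color, whereas $\selp{\E}$ forbids \emph{any} column from carrying too many. First I would take the same witnessing family $\W=\{f_i:i\in\omega\}$ with $f_i=\omega\times\{i\}$, and assume for contradiction that there is $X\in\sel$, $X\subseteq\bigcup\F$ with $\F\in[{}^\omega\omega]^{<\omega}$, and a sequence $\gseqn{k_i}{i<\omega}$ such that $f_i\setminus X\in\EDp{\E}$ for all $i$; that is, for each $i$ there are only finitely many $A\in\E$ with $\abs{(f_i\setminus X)\cap A}>k_i$. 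The key observation is that $f_i\cap A_{j,i}$ has \emph{exactly one} point above each $D_k$, so $f_i\setminus X$ meeting $A_{j,i}$ in more than $k_i$ points means $A_{j,i}$ fails to be $X$-covered on more than $k_i$ of the $D_k$'s. The $\EDp{\E}$-hypothesis tells us this bad behavior happens for only finitely many colors $A_{j,i}$ in row $i$, i.e., for all but finitely many $j$, the color $A_{j,i}$ is $X$-covered on all but at most $k_i$ of the sets $D_k$. This is precisely the input that Claim~\ref{claim3} needs, except that now the claim's hypothesis ``$A_{j,i}$ is $X$-covered on all but $k_i$ points of $[u,v]$'' must be replaced by ``for all but finitely many $j$, $A_{j,i}$ is $X$-covered on all but $k_i$ points of $[u,v]$''.

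The modifications I would make are as follows. Claims~\ref{claim1} and~\ref{claim2}, together with Lemmas~\ref{pravdivost} and~\ref{celacast}, are purely combinatorial statements about the partition $\E$ and about integer intervals, so they carry over verbatim. Claim~\ref{claim3} must be restated: given a color-shift bound, I would first discard, in each row $i$, the finitely many ``bad'' colors $A_{j,i}$ (those not $X$-covered on cofinitely many $D_k$); more precisely, I would choose the starting interval of colors $[s_{\abs{\F}},t_{\abs{\F}}]$ large enough and far enough to the right that every color $A_{j,\abs{\F}}$ with $j\in[s_{\abs\F},t_{\abs\F}]$ \emph{is} good in row $\abs\F$, and then use Claim~\ref{claim1} to see which colors arise in the lower rows after applying the $(\cdot\downarrow\cdot)$ operation. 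Since Claim~\ref{claim1} shows $(A_{j,i+1}\downarrow D_k)=A_{j+k,i}$, descending one row shifts the color index by $k$, which ranges over the interval $[u,v]$ we are working with; thus the set of colors encountered in row $i-1$ while descending from the chain in row $i$ is an interval of length $l+d-1$ (in the notation of Claim~\ref{claim2}), and I would need to arrange at the outset that \emph{all} colors that will ever be encountered, in every row, are good. Since there are only $\abs\F+1$ rows involved and each row has only finitely many bad colors, a single sufficiently-large choice of the top chain's parameters suffices. With all encountered colors good, the inductive descent of the original proof — choosing nested intervals $[u_j,v_j]$ on which successive colors are $X$-covered, shrinking by a factor $\floor{\cdot/(k_i+1)}$ at each step — goes through unchanged, and the same arithmetic with the sequences $q_l,p_l$ yields an $X$-covered $\chain{0}{s_0,t_0}{u_0,v_0}$ $a_0$ of size $d(a_0)=1$, $l(a_0)=1$ whose single point $\ev{m,i}$ lies in $X$ for all $0\le i\le\abs\F$, contradicting that $X\subseteq\bigcup\F$.

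I would present this as: restate Claim~\ref{claim3} with the weakened hypothesis ``$\varphi(k)$ holds for all but $k_i$ points of $[u,v]$'' applied only to colors in a range guaranteed to avoid the finitely many exceptions; observe that Lemma~\ref{pravdivost} still applies since ``all but $k_i$'' is exactly its hypothesis; note that the recursive bookkeeping of intervals and the definitions of $q_l,p_l$ are identical; and finally redo the top-of-induction step so that $u_{\abs\F}$ is chosen past \emph{both} the finitely many columns where $X$-coverage fails for structural reasons (as in the original footnote) \emph{and} beyond the finitely many bad columns coming from the $\EDp{\E}$ hypothesis, for every one of the finitely many colors in every one of the $\abs\F+1$ relevant rows. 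The main obstacle is purely notational: one has to verify that the finite set of colors $A_{j,i}$ that actually appear at each level of the descent is known in advance (it is, by the explicit shift formula in Claim~\ref{claim1}: starting from $[s_{\abs\F},t_{\abs\F}]$ in row $\abs\F$ and descending, the color indices stay within a computable bounded range depending only on $p_{\abs\F}$ and $\max\{v_i\}$), so that ``good for all encountered colors'' can be secured by a single initial choice rather than an impossible simultaneous-over-infinitely-many requirement. Everything else is a faithful, if bulkier, copy of the proof of Theorem~\ref{tazkaThm}.
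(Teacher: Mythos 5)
Your proposal is correct and follows essentially the same route as the paper: same witness $\W=\{\omega\times\{i\}:i\in\omega\}$, the same reduction of $f_i\setminus X\in\EDp{\E}$ to ``for all but finitely many $j$, $\abs{A_{j,i}\setminus X}\le k_i$,'' and the same plan of collecting the bad color-indices into a finite set $C=\{j:(\exists i\le\abs{\F})\ \abs{A_{j,i}\setminus X}>k_i\}$ and starting the top chain with $s_{\abs\F}>\max C$; the paper formalizes this by simply adding the hypothesis $s'>\max C$ to Claim~\ref{claim3}. One small over-complication in your bookkeeping: you try to bound in advance the range of all color indices that will be encountered via a ``computable bounded range depending on $p_{\abs\F}$ and $\max\{v_i\}$,'' but $\max\{v_i\}$ is produced \emph{during} the descent by Claim~\ref{claim3} and so is not available before the induction runs. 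This does not matter, because no upper bound is needed at all: from Claim~\ref{claim2} one reads off $s_{i-1}=s_i+v_i>s_i$, so the color indices only increase as the row index decreases, and the single condition $s_{\abs\F}>\max C$ already guarantees that every $A_{j,i}$ touched during the descent has $j>\max C$ and is therefore good.
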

\proof By contradiction. Suppose $\sel$ is a $\pid{\EDp{\E}}$-ideal. Set again $f_i=\omega\times\{i\}$. We will show, that the family $\W=\{f_i:i\in\omega \}$ is a witness for $\sel$ not being a $
\pid{\EDp{\E}}$. By assumptions there is $X\in\sel$, $X\subseteq\bigcup\F$ for some $\F\subseteq {^\omega\omega}$, $\abs{\F}<\omega$ such that $f_i\setminus X\in\EDp{\E}$ for each $i\in\omega$, i.e., 
$$(\exists\langle k_i\in\omega:i<\omega \rangle\in{^\omega\omega})(\forall i\in\omega)(\forall^\infty A\in \E)\ \abs{(f_i\cap A)\setminus X}\leq k_i.$$
Since $\abs{f_i\cap B_j}\leq 2$ for any $i,j$ and $f_i\cap A_{j,i'}\neq \emptyset$ only for $i'=i$, it is equivalent to $$(\exists\langle k_i\in\omega:i<\omega \rangle\in{^\omega\omega})(\forall i\in\omega)(\forall^\infty j\in\omega)\ \abs{(f_i\cap A_{j,i})\setminus X}\leq k_i.$$ By the definition, $A_{j,i}\subseteq \omega\times\{ i\}=f_i$, hence $f_i\cap A_{j,i}=A_{j,i}$  for any $i,j\in\omega$. Thus, we get an equivalent formula
\begin{equation}\tag{*}
(\exists\langle k_i\in\omega:i<\omega \rangle\in{^\omega\omega})(\forall i \in \omega)(\forall^\infty j\in\omega)\ \abs{A_{j,i}\setminus X}\leq k_i.
\end{equation}
Let $C=\{j\in\omega: (\exists i\leq \abs{\F})\ \abs{A_{j,i}\setminus X}>k_i \}$. Clearly, $C\in\fin$ because for any $i\leq \abs{\F}$ we have that $\{j\in\omega: \abs{A_{j,i}\setminus X}>k_i \}\in\fin$ by (*).

We will use most of the apparatus built in the proof of Theorem~\ref{tazkaThm}, particularly Claim~\ref{claim1}, Claim~\ref{claim2}, modified Claim~\ref{claim3}, sequences $p_l$ and $q_l$ and the notation $a_i, a_i'$.
We will follow the lines of the proof of Theorem~\ref{tazkaThm}. In fact, the proof is identical. The only difference is in the modified version of Claim~\ref{claim3}, in which we have to carefully pick the first $X$-covered chain in the induction so as to avoid the ``bad'' colors (i.e., the colors $A_{j,i}$ such that $\abs{A_{j,i}\setminus X}>k_i$). 

Take an $X$-covered $\chain{\abs{\F}}{s_{\abs{\F}},t_{\abs{\F}}}{u_{\abs{\F}},v_{\abs{\F}}}$ $a_{\abs{\F}}$ with $s_{\abs{\F}},t_{\abs{\F}},u_{\abs{\F}},v_{\abs{\F}}$ being such that $l(a_{\abs{\F}})=p_{\abs{\F}}, d(a_{\abs{\F}})=q_{\abs{\F}}$ and $s_{\abs{\F}}>\max C$. It is easy to see that there is such a chain.\footnote{Set $s_{\abs{\F}}=\max C+1 >\max\{j\in\omega: \abs{A_{j,\abs{\F}}\setminus X}>k_{\abs{\F}} \}$ and $t_{\abs{\F}}=s_{\abs{\F}}+p_{\abs{\F}}-1$. Then for $j\in[s_{\abs{\F}},t_{\abs{\F}}]$ there is $m_j$ such that $A_{j,\abs{\F}}$ is $X$-covered on $D_m$ for $m>m_j$. Set $u_{\abs{\F}}=\max\{m_j: j\in [s_{\abs{\F}},t_{\abs{\F}}] \}+1$ and $v_{\abs{\F}}=u_{\abs{\F}}+q_{\abs{\F}}-1$.}

We modify the Claim~\ref{claim3} by adding the assumption $s'>\max C$, thus, the new formulation is as follows.
\begin{clm}{3*}\label{claim3*}
For every $\chain{i}{s',t'}{u,v}$ $b$  with $d(b)\geq (k_i+1)^{l(b)}$ and $s'>\max C$ there is an $X$-covered $\chain{i}{s',t'}{u',v'}$ $c$ such that $d(c)=\floor*{\frac{d(b)}{(k_i+1)^{l(b)}}}$ and $c\subseteq b$.
\end{clm}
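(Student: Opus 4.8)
The plan is to obtain Claim~3* by running the proof of Claim~\ref{claim3} essentially unchanged, the sole new point being that the hypothesis $s'>\max C$ restores, for every color $A_{j,i}$ with $j\in[s',t']$, the per-column estimate that in Theorem~\ref{tazkaThm} was available for all $j$ and here only for $j$ outside the finite set $C$. As in all its applications, Claim~3* is used only for rows $i\le\abs{\F}$, which is what makes the reduction via $C$ legitimate.

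First I would record the elementary observation that $s'>\max C$ forces $[s',t']\cap C=\emptyset$, so that for each $j\in[s',t']$ the negation of ``$j\in C$'' gives $\abs{A_{j,i}\setminus X}\le k_i$ for every $i\le\abs{\F}$, in particular for the row $i$ of the chain $b$. Combined with the two facts recorded after the construction of $\E$ — that $\abs{A_{j,i}\cap(D_k\times\{i\})}=1$ for all $k$, and that these singletons are pairwise disjoint over $k$ — this yields exactly what is needed to feed Lemma~\ref{pravdivost}: for each $j\in[s',t']$, the color $A_{j,i}$ fails to be $X$-covered on $D_k$ for at most $k_i$ many values of $k$.

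Then I would reproduce verbatim the induction from the proof of Claim~\ref{claim3}: build a $\subseteq$-decreasing family of integer intervals $[u_j,v_j]$, $s'\le j\le t'$, with $A_{j,i}$ being $X$-covered on $[u_j,v_j]$ and $\abs{[u_j,v_j]}=\floor*{\frac{d(b)}{(k_i+1)^{(j+1)-s'}}}$. Both the base case $j=s'$ and the inductive step apply Lemma~\ref{pravdivost} to the predicate ``$A_{j,i}$ is $X$-covered on $D_k$'' (true off at most $k_i$ points of the current interval by the previous paragraph) and then collapse the nested floors using Lemma~\ref{celacast}. After $l(b)=t'-s'+1$ steps one reaches $[u_{t'},v_{t'}]$ of size $\floor*{\frac{d(b)}{(k_i+1)^{l(b)}}}>0$ — positivity being the one place the hypothesis $d(b)\ge(k_i+1)^{l(b)}$ is used — on which every $A_{j,i}$ with $j\in[s',t']$ is $X$-covered. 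Putting $u'=u_{t'}$, $v'=v_{t'}$ and $c=\bigcup_{j=s'}^{t'}A_{j,i}\cap\bigcup_{k=u'}^{v'}(D_k\times\{i\})$ produces an $X$-covered $\chain{i}{s',t'}{u',v'}$ with $d(c)=\floor*{\frac{d(b)}{(k_i+1)^{l(b)}}}$, and $c\subseteq b$ because $[u',v']\subseteq[u,v]$ while $[s',t']$ is untouched.

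There is no genuine difficulty internal to Claim~3*. The only thing demanding care lies one level up, in the main induction of Proposition~\ref{selnotp-ex_towers}: each invocation of Claim~3* requires $s'>\max C$, so I would check that this invariant persists all the way down. The base chain $a_{\abs{\F}}$ is chosen with $s_{\abs{\F}}>\max C$; and Claim~\ref{claim2} sends $a_i$ to a chain $a'_{i-1}$ (whose $j$-interval Claim~3* then keeps) with lower $j$-endpoint $s_{i-1}=t_i+u_i-(p_{i-1}-1)=s_i+(p_i-p_{i-1})+u_i\ge s_i$, using $p_i\ge p_{i-1}$. Hence $s_0\ge s_1\ge\dots\ge s_{\abs{\F}}>\max C$, so Claim~3* applies at every step. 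From there the argument is word-for-word that of Theorem~\ref{tazkaThm}, ending with an $X$-covered $\chain{0}{s_0,t_0}{u_0,v_0}$ $a_0$ with $d(a_0)=q_0=1$, $l(a_0)=p_0=1$ and $\dom{a_0}\subseteq\dom{a_j}$ for all $j\le\abs{\F}$; picking $m\in\dom{a_0}$ gives $\ev{m,i}\in X$ for every $i\le\abs{\F}$, which is impossible for $X\subseteq\bigcup\F$ with $\abs{\F}<\omega$. The expected main obstacle, flagged by the authors themselves, is precisely this bookkeeping — making sure the finitely many ``bad'' colors catalogued in $C$ stay out of the way throughout the descent — and it is handled entirely by the single propagating inequality $s'>\max C$.
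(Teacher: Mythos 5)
Your proposal is correct and takes essentially the same approach as the paper: rerun the proof of Claim~\ref{claim3} verbatim, with the new hypothesis $s'>\max C$ supplying the per-color estimate $\abs{A_{j,i}\setminus X}\leq k_i$ for all $j\in[s',t']$ and $i\le\abs{\F}$, which is exactly what feeds Lemma~\ref{pravdivost}. The extra bookkeeping you carry out (computing $s_{i-1}=s_i+(p_i-p_{i-1})+u_i\ge s_i$ so that $s_i>\max C$ propagates down the induction) is a detail the paper only asserts, and your derivation of it is correct.
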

The sole purpose of the condition $s'>\max C$ is to ensure that $\abs{A_{j,i}\setminus X}\leq k_i$ for $j\in[s',t']$. In fact, the assumption $s_{\abs{\F}}>\max C$ ensures $s_i>\max C$ for $i\leq\abs{\F}$. Hence, it is perfectly safe to apply Claim~\ref{claim3*} throughout the induction to outputs of Claim~\ref{claim2}.

The rest of the proof is identical.
\qed

\section{Conclusion and open problems}

The~aim of the paper was to study the~notion of a~P-property for two ideals induced by partitions of natural numbers, see Definition~\ref{definiciaPJ}. We chose standard critical ideals appearing in the~literature~\cite{BrFl17,BrFaVe,Hr,hrusakborel}, namely $\fin\times\emptyset$,  $\sel$, $\ED$, $\emptyset\times\fin$, $\finfin$. In the~case of ideals induced by the~same partition, we easily answered the~question which of such pairs satisfy the~P-property for two ideals. The~result is presented in~Table~\ref{omxom}. However, once considering pairs of ideals induced by potentially different partitions, the~answer appeared to be much harder. In such cases we seek a~combinatorial property of two partitions guaranteeing the~corresponding induced ideals possess the~P-property for two ideals. 
\par
Considering the~partition~$\A$, the~first ideal in the~list induced by~$\A$ is $\ofinp{\A}$. However, the~ideal is a~P-ideal, and hence a~$\pid{\J}$-ideal for an arbitrary~$\J$. We have found easy conditions characterizing the~relation of partitions~$\A,\B$ such that $\finop{\A}$ and $\finfinp{\A}$ are $\pid{\J}$-ideals for $\J$ induced by~$\B$, see Table~\ref{omxom1}. 
\par
For the~selector ideal $\selp{\A}$ and the~eventually different ideal $\EDp{\A}$, we were able to reach our goal just partially. Namely, we were able to find full characterizations in four cases displayed in Table~\ref{overview}, see Theorems~\ref{ref1}, \ref{veze}, and~\ref{ed_ofin}. We were still able to find a~useful necessary condition on partitions~$\A,\B$ such that $\selp{\A}$ is a~$\pid{\J}$-ideal for $\J$ induced by~$\B$, see Proposition~\ref{sufficient}. By Theorems~\ref{ref1}, \ref{ed_ofin}, our necessary conditions are also sufficient ones in two cases. However, as our Theorem~\ref{tazkaThm}, and Proposition~\ref{selnotp-ex_towers} show, it is not the~case for all considered ideals. 

 \begin{center}
 \begin{table}[H]
\centering
\begin{tabular}{|c|c|c|c|c|c|c|}
\hline
 & $\mathrm{P}$ &  $\mathrm{P}(\finop{\B})$ & $\mathrm{P}(\selp{\B})$  & $\mathrm{P}(\EDp{\B})$ & $\mathrm{P}(\ofinp{\B})$  & $\mathrm{P}(\finfinp{\B})$  \\
 \hline\hline
  $\ofinp{\A}$& \cmark & \cmark &  \cmark&\cmark  &\cmark &\cmark \\
  \hline
  $\finop{\A}$ & \xmark & $\rsub{}{}$ & $\rsub{}{}$ &$\rsub{}{}$ &$\rsub{}{}$ &$\rsub{}{}$ \\
   \hline
 $\finfinp{\A}$ & \xmark & $\uparrow$ & $\uparrow$ & $\uparrow$ & $\uparrow$ & $\uparrow$ \\
  \hline
   $\selp{\A}$ & \xmark & $\rsub{}{}$  & ?  & ? & $\rsub{}{}$  & ?  \\
  \hline
  $\EDp{\A}$ & \xmark  & $\perp,\rsub{}{},\uparrow$  & ?  & ? & $\perp,\rsub{}{}$   & ?    \\
    \hline
\end{tabular}
   \caption{Characterizations of $\pid{\J}$-relationships between all combinations of considered ideals.}
   \label{overview}
 \end{table}
\end{center}
The~upward pointing arrow~$\uparrow$ in the table means that the relationship is characterized by the relationship provided above the corresponding cell, e.g., $\finfinp{\A}$ is a $\pid{\finop{\B}}$ if and only if $\rsub{\finop{\A}}{\finop{\B}}$.

Thus, we leave the~question when are the~ideals $\selp{\A}$ and $\EDp{\A}$ a~$\pid{\J}$-ideal for $\J$ induced by~$\B$ for cases displayed in Table~\ref{overview} open. Among them, we believe the~following case is crucial:
\begin{quest}
    Is there a~reasonable combinatorial characterization of partitions~$\A,\B$ such that $\selp{\A}$ is a~$\pid{\selp{B}}$-ideal?
\end{quest}
Finally, there is one more ideal induced by a~partition of natural numbers and appearing very often in the~literature, namely Kat\v etov power ideal $\fin^\alpha$~\cite{BFMS,kat72}. We have not yet touched it, and therefore the~same question about this ideal seems to be fully open.  
\par


\vspace{0.5cm}

\end{document}